\DeclareFontFamily{OMS}{rsfs}{\skewchar\font'60}
\DeclareFontShape{OMS}{rsfs}{m}{n}{<-5>rsfs5 <5-7>rsfs7 <7->rsfs10 }{}
\DeclareSymbolFont{rsfs}{OMS}{rsfs}{m}{n}
\DeclareSymbolFontAlphabet{\scr}{rsfs}
\newtheorem{theorem}{Theorem}[section]
\newtheorem{lemma}[theorem]{Lemma}
\theoremstyle{definition}
\newtheorem{definition}[theorem]{Definition}
\theoremstyle{remark}
\newtheorem{remark}[theorem]{Remark}
\newtheorem{question}[theorem]{Question}
\theoremstyle{theorem}
\newtheorem{proposition}[theorem]{Proposition}
\theoremstyle{theorem}
\newtheorem{corollary}[theorem]{Corollary}
\theoremstyle{definition}
\newcommand{\blank}{\underline{\hskip 10pt}}
\renewcommand{\O}{\mbox{$\mathcal{O}$}}
\newcommand{\Cech}{{$\check{\text{C}}$ech} }
\newcommand{\nsubset}{\not\subset}
\newcommand{\sI}{\scr{I}}
\newcommand{\sH}{\scr{H}}
\newcommand{\sG}{\scr{G}}
\newcommand{\sF}{\scr{F}}
\newcommand{\mydot}{{{\,\begin{picture}(1,1)(-1,-2)\circle*{2}\end{picture}\ }}}
\newcommand{\qis}{\simeq_{\text{qis}}}
\newcommand{\myR}{{\bf R}}
\newcommand{\DuBois}[1]{{\underline \Omega {}^0_{#1}}}
\newcommand{\FullDuBois}[1]{{\underline \Omega {}^{\mydot}_{#1}}}
\newcommand{\bA}{\mathbb{A}}
\newcommand{\bC}{\mathbb{C}}
\newcommand{\bF}{\mathbb{F}}
\newcommand{\bH}{\mathbb{H}}
\newcommand{\bQ}{\mathbb{Q}}
\newcommand{\bZ}{\mathbb{Z}}
\newcommand{\SIClo}[2]{\hskip 3pt {\vphantom{#1}}^{+}_{\hskip -3pt #2}{\hskip -3pt #1}} 
\newcommand{\WSIClo}[2]{\hskip 4pt {\vphantom{#1}}^{*}_{\hskip -4pt #2}{\hskip -2pt #1}} 
\newcommand{\SN}[1]{\hskip -2pt {\vphantom{#1}}^{+}{\hskip -3pt #1}} 
\newcommand{\WN}[1]{\hskip -1pt {\vphantom{#1}}^{*}{\hskip -2pt #1}} 
\newcommand{\tld}{\widetilde }
\newcommand{\ba}{\mathfrak{a}}
\DeclareMathOperator{\an}{{an}}
\DeclareMathOperator{\Supp}{{Supp}}
\DeclareMathOperator{\coherent}{{coh}}
\DeclareMathOperator{\quasicoherent}{{qcoh}}
\DeclareMathOperator{\Ext}{Ext}
\DeclareMathOperator{\Hom}{Hom}
\DeclareMathOperator{\sHom}{{\sH}om}
\DeclareMathOperator{\Spec}{{Spec}}
\DeclareMathOperator{\sn}{{sn}}
\DeclareMathOperator{\wn}{{wn}}
\DeclareMathOperator{\Char}{{char}}
\DeclareMathOperator{\Gr}{{Gr}}
\DeclareMathOperator{\red}{red}
\DeclareMathOperator{\Frac}{{Frac}}
\DeclareMathOperator{\height}{{ht}}
\newcommand{\tensor}{\otimes}
\newcommand{\hypertensor}{{\uuline \tensor}}
\begin{document}

\title{F-injective singularities are Du Bois}
\author{Karl Schwede}
\address{Department of Mathematics\\
University of Michigan\\
530 Church Street\\
Ann Arbor MI 48109}
\email{kschwede@umich.edu}
\begin{abstract}In this paper, we prove that singularities of $F$-injective type are Du Bois.  This extends the correspondence between
singularities associated to the minimal model program and singularities defined by the action of Frobenius in positive characteristic.
\end{abstract}
\subjclass[2000]{14B05, 13A35}
\keywords{tight closure, F-injective, Du Bois, seminormal, weakly normal}
\thanks{The author was partially supported by an NSF postdoctoral fellowship and by RTG grant
number 0502170}
\maketitle

\section{Introduction and background}

The main result of this paper is the following theorem:
\vskip10pt
\hskip-12pt
{\textsc{Theorem \ref{TheoremFInjectiveImpliesDuBois}.}}
{\it
Suppose that $X$ is a reduced scheme of finite type over a field of characteristic zero.  If $X$ has dense $F$-injective type, then $X$
has Du Bois singularities.
}
\vskip10pt \hskip -12pt
A reduced scheme of finite type over a perfect field of characteristic $p > 0$ is called $F$-injective if for every point $Q \in X$ with stalk $R_Q = \O_{X,Q}$, the action of Frobenius on each local cohomology module $H^i_{Q R_Q}(R_Q)$ is injective.  Now suppose that $X$ is a scheme of finite type over a field of characteristic zero.  Roughly speaking, we say that $X$ has dense $F$-injective type if, in a family of characteristic $p$ models for $X$ (obtained by reduction to positive characteristic), an infinite set of those models are themselves $F$-injective.   See Definitions \ref{DefinitionFInjective} and \ref{FBlankDefinition} for more details.

The notion of Du Bois singularities, which has its origins in Hodge theory (see \cite{SteenbrinkCohomologicallyInsignificant} and \cite{DuBoisMain}), is somewhat harder to define; see Definition \ref{DefinitionDuBoisSingularities} and Theorem \ref{AlternateDuBoisCharacterization}.  However, Du Bois singularities have the following important property:  If $X$ is a reduced proper scheme over $\bC$ with Du Bois singularities, then the natural map $H^i(X^{\an}, \bC) \rightarrow H^i(X, \O_X)$ is surjective.  Note that if $X$ is smooth, this surjectivity follows from the degeneration of the Hodge De Rham spectral sequence. Furthermore, this surjectivity is an important topological condition which naturally appears in several contexts (for example, see \cite[Chapter 9]{KollarShafarevich} to see how it relates to Kodaira-type vanishing theorems).

We now briefly review some of the history that leads us to the result above.  The minimal model program in dimension three, concerned with finding ``minimal'' birational models of algebraic varieties, was one of
the major developments in Algebraic Geometry in the 1980s.  A key aspect of the progress made at that time was the
discovery that one had to deal with singular, instead of simply smooth, varieties.  Some of the types of singularities
associated with the minimal model program are terminal, canonical, log terminal, log canonical, and rational singularities; see
\cite{KollarMori}, \cite{KMM}, \cite{KollarSingularitiesOfPairs} and \cite{ReidYoungPersons}.

The action of Frobenius, or $p$th power map, on rings of prime characteristic has been an important tool for studying singularities
for many years and has had a number of interesting applications to commutative algebra since the 1970s; see for example
\cite{HochsterRobertsFrobeniusLocalCohomology}, \cite{HartshorneSpeiserLocalCohomologyInCharacteristicP}, and
\cite{PeskineSzpiroDimensionProjective}.  In
the early 1980s, Richard Fedder demonstrated that $F$-pure and $F$-injective singularities (which are defined by the action of
Frobenius, see Remark \ref{FPureRemark}) were connected to rational singularities
\cite{FedderFPureRat}.  This was interesting because the definition of rational singularities is very different from the purely
algebraic definitions of $F$-pure and $F$-injective singularities.

In the mid 1980s, Melvin Hochster and Craig Huneke introduced tight closure \cite{HochsterHunekeTC1}, a powerful new method of
commutative
algebra that also relied on the Frobenius action.  Associated with tight closure were other new types of singularities, particularly
$F$-regular and $F$-rational singularities, which are more restrictive classes than $F$-pure and $F$-injective singularities
respectively.  An excellent survey article on tight closure from a geometric point of view is \cite{SmithTightClosure}.

In the 1990s, the correspondence between singularities associated to the minimal model program with those coming from the action of
Frobenius became well established.  Karen Smith proved that singularities of open $F$-rational type are rational; see
\cite{SmithFRatImpliesRat}.  The fact that rational singularities are of open $F$-rational type was independently proved by Hara and also by Mehta and Srinivas; see \cite{HaraRatImpliesFRat} and \cite{MehtaSrinivasRatImpliesFRat} respectively.
This result also implies that log terminal $\bQ$-Gorenstein singularities are of dense
$F$-regular type; see \cite{HaraRatImpliesFRat}.  Furthermore, Watanabe established that normal $\bQ$-Gorenstein singularities of open $F$-regular type (respectively dense $F$-pure type) are log terminal (respectively log canonical).  Watanabe's results were generalized to the context of pairs with the help of Hara and published in
\cite{HaraWatanabeFRegFPure}.  Further generalizations were made by Takagi; see \cite{TakagiInversion} and \cite{TakagiPLTAdjoint}.  It is still an open question
whether log canonical singularities are of dense $F$-pure type (although it is known in certain special cases; see
\cite{HaraDimensionTwo}, \cite{MehtaSrinivasFPureSurface}, \cite{SrinivasFPureType}, and \cite{TakagiWatanabeFPureThresh}).  Also see
\cite{HaraInterpretation}, \cite{SmithMultiplierTestIdeals}, and \cite{HaraYoshidaGeneralizationOfTightClosure} for additional
discussion of related notions.

$F$-injective singularities fit naturally into the lower right corner of the right square of the diagram below.  It is also known that Gorenstein
$F$-injective singularities are $F$-pure; see \cite{FedderFPureRat}.  On the other hand, Du Bois
singularities, historically connected to more analytic methods, (conjecturally) naturally fill
the lower right hand corner of the left square; see \cite{SteenbrinkMixed} and \cite{IshiiIsolatedQGorenstein}.  In
particular, rational singularities are known to be Du Bois, see \cite{KovacsDuBoisLC1} and \cite{SaitoMixedHodge}, and it was conjectured by Koll\'ar, see \cite[1.13]{KollarFlipsAndAbundance}, that log canonical singularities are Du Bois.  The best progress towards this conjecture can be found in
\cite{IshiiIsolatedQGorenstein}, \cite{KovacsDuBoisLC1}, \cite{KovacsDuBoisLC2}, and \cite{SchwedeEasyCharacterization}.  In \cite{KovacsDuBoisLC1}, it is also shown that normal quasi-Gorenstein Du Bois singularities are log canonical (also see \cite{DohertySingularitiesOfGenericProjectionHypersurfaces}).

\[
\small
\xymatrix{
\text{Canonical} \ar@{=>}[d] & & & \\
\text{Log Terminal} \ar@/^2pc/@{<=>}[rrr] \ar@{=>}[r] \ar@{=>}[d]& \text{Rational} \ar@{=>}[d] \ar@/^2pc/@{<=>}[rrr] & & \text{
$F$-Regular} \ar@{=>}[r] \ar@{=>}[d] & \text{$F$-Rational} \ar@{=>}[d]\\
\text{Log Canonical} \ar@2{.>}[r]^-{\text{conj.}} \ar@/_2pc/@{<=}[rrr] \ar@/_2.7pc/@{<=}[r]_-{\text{+ Gor. \& normal}} & \text{Du Bois}
\ar@/_2pc/@2{<.}[rrr] & & \text{$F$-Pure/$F$-Split}
\ar@{=>}[r] \ar@/_2.7pc/@{<=}[r]_-{\text{+ Gor.}} & \text{$F$-Injective}\\
& & & & \\
}
\]

We propose a correspondence between $F$-injective and Du Bois singularities.  As mentioned before, the main goal of this paper is to
prove that singularities of dense $F$-injective type are Du Bois.  In the process of proving this result, we will show that
seminormality partially characterizes  Du Bois singularities and that weak normality partially characterizes $F$-injective
singularities; see Lemma \ref{SeminormalDuBoisComplex} and Theorem \ref{FInjectiveImpliesWeaklyNormal}.
\\
\\
{\it Acknowledgements:}
\\
The results that appear in this paper originally appeared in my doctoral dissertation at the University of Washington, which was
directed by S\'andor Kov\'acs.  I would like to thank Nobuo Hara for pointing out to me why $F$-injectivity localizes, Neil Epstein for
introducing me to the algebraic side of seminormality, Davis Doherty for several valuable discussions, and Karen Smith for comments on an earlier draft of this paper.  I would also like to thank the referee for many useful suggestions and for pointing out several typos in an earlier draft.

\section{Preliminaries}

All rings will be assumed to be commutative, noetherian and essentially of finite type over a field or $\bZ$.  In particular, all rings will be assumed to be excellent.  All schemes will be
assumed to be separated and noetherian.

We begin with a definition of a log resolution.
Let $\pi : \tld X \rightarrow X$ be a morphism of reduced schemes of finite type over a field $k$.  Suppose that $\ba$ is a sheaf of
ideals on $X$.  We say that $\pi$ is a \emph{log resolution} of $\ba$ if it satisfies the following four properties.
\begin{itemize}
\item[(i)]  $\pi$ is birational and proper (typically it is chosen to be projective).
\item[(ii)]  $\tld X$ is smooth over $k$
\item[(iii)]  $\ba \O_{\tld X} = \O_{\tld X}(-G)$ is an invertible sheaf corresponding to a divisor, $-G$.
\item[(iv)]  If $E$ is the exceptional set of $\pi$, then $\Supp(G) \cup E$ has simple normal crossings.
\end{itemize}

We say that $\pi$ is a \emph{strong log resolution} if it is a log resolution and $\pi$ is an isomorphism outside of the subscheme
$V(\ba)$ defined by $\ba$.  Log resolutions always exist if the characteristic of $k$ (the underlying field), is zero.  Strong log
resolutions always exist if $X$ is smooth over a field $k$ of characteristic zero;  see \cite{HironakaResolution}.  The reader may also
wish to consult \cite{BierstoneMilman}, \cite{BravoEncinasVillmayorSimplified}, or \cite{WlodarczykResolution} for an algorithmic
approach to constructing log resolutions.

We now review the forms of duality we will need.  We first fix some notation.  Suppose $X$ is a scheme.  The
notation $D^b(X)$ will denote the derived category of bounded complexes of $\O_X$-modules, $D^+(X)$ will correspond to bounded below
complexes and $D^-(X)$ will correspond to bounded above complexes.  The notation $D_{\coherent}(X)$ will denote the derived category of
$\O_X$-modules with coherent cohomology and $D_{\quasicoherent}(X)$ will denote the derived category of $\O_X$-modules with
quasi-coherent cohomology.  The various combinations $D^b_{\coherent}(X)$, $D^+_{\quasicoherent}(X)$, etc. will denote the obvious
derived categories, (ie. bounded complexes with coherent cohomology, bounded below complexes with quasi-coherent cohomology, etc.).  If
$F^{\mydot}$ and $G^{\mydot}$ are complexes, we will write $F^{\mydot} \qis G^{\mydot}$ if $F^{\mydot}$ and $G^{\mydot}$ are
quasi-isomorphic.  See \cite{HartshorneResidues} for precise definitions of these notions.

Recall that a \emph{dualizing complex} $\omega_X^{\mydot}$ for $X$ is an object in $D^+_{\coherent}(X)$ of finite injective dimension
such that the natural map
\[
F^{\mydot} \rightarrow \myR \sHom^{\mydot}_{\O_X}(\myR \sHom^{\mydot}_{\O_X}(F^{\mydot}, \omega_X^{\mydot}), \omega_X^{\mydot})
\]
is an isomorphism for all $F^{\mydot} \in D^+_{\coherent}(X)$; see \cite[V.2]{HartshorneResidues}.  Dualizing complexes exist for
schemes essentially of finite type over a field or $\bZ$, and are unique up to shifting and tensoring with an invertible sheaf
\cite[V.3.1]{HartshorneResidues}.  Note that this implies that dualizing complexes exist for all schemes considered in this paper.  Analogous definitions can be made for rings, and we will freely switch between affine schemes and
rings.

Often, we will be working with local rings $(R, m)$, in which case tensoring with an invertible sheaf is uninteresting.  However we can
specify the shift to a certain extent.  If $k = R/m$ is the residue field then there is a $d$ such that
\[
\Ext^i(k, \omega_R^{\mydot}) = 0 \text{ for $i \neq d$, and } \Ext^d(k, \omega_R^{\mydot}) = k, \text{
\cite[V.3.4]{HartshorneResidues}}.
\]
A dualizing complex is called \emph{normalized} if the $d$ above is equal to zero.

We are now in a position to state the duality theorems we will use.  First, we state local duality for complexes.

\begin{theorem}\cite[V.6.2]{HartshorneResidues}, \cite[2.4]{LipmanLocalCohomologyAndDuality}
\label{LocalDualityForComplexes}
Let $(A,m)$ be a local ring and of dimension $d$ and $C^{\mydot} \in D^+_{\coherent}(A)$.  Suppose $\omega_A^{\mydot}$ is a normalized
dualizing complex for $A$ and suppose $I$ is an injective hull of the residue field $k = A/m$.  Then the natural morphism of functors
\[
\myR \Gamma_m(C^{\mydot}) \rightarrow \myR \Hom^{\mydot}_A(\myR \Hom^{\mydot}_A(C^{\mydot}, \omega_A^{\mydot}), I)
\]
is an isomorphism.
\end{theorem}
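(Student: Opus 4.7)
The plan is to reduce the statement to the single computation $\myR\Gamma_m(\omega_A^{\mydot})\qis I$, and then use Grothendieck biduality to propagate it to arbitrary $C^{\mydot}\in D^+_{\coherent}(A)$. The two key inputs are (a) the structure of the injective resolution of a normalized dualizing complex, and (b) a compatibility between $\myR\Gamma_m$ and $\myR\Hom$.

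First I would identify $\myR\Gamma_m(\omega_A^{\mydot})$. By \cite[V.3.4, V.7]{HartshorneResidues}, any normalized dualizing complex $\omega_A^{\mydot}$ admits an injective resolution of the form $\omega_A^{\mydot}\qis J^{\mydot}$ with $J^{-i}=\bigoplus_{\height(p)=i} E(A/p)$, where $E(A/p)$ is the indecomposable injective hull of $A/p$. Since $\Gamma_m E(A/p)=0$ whenever $p\neq m$ and $\Gamma_m E(A/m)=I$, applying $\Gamma_m$ termwise kills every summand except $I$ in cohomological degree zero (this is where the ``normalized'' hypothesis is used). Hence $\myR\Gamma_m(\omega_A^{\mydot})\qis I$ in $D^+(A)$.

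Next I would establish the functorial comparison
\[
\myR\Gamma_m\,\myR\Hom^{\mydot}_A(D^{\mydot},\omega_A^{\mydot})\;\qis\;\myR\Hom^{\mydot}_A(D^{\mydot},\myR\Gamma_m(\omega_A^{\mydot}))
\]
for $D^{\mydot}\in D^-_{\coherent}(A)$. This follows because $\omega_A^{\mydot}$ has finite injective dimension and $\Gamma_m$ preserves injectives (each injective summand $E(A/p)$ either is killed or stays an injective $E(A/m)$), so $\myR\Hom(D^{\mydot},J^{\mydot})$ may be computed termwise and $\Gamma_m$ commutes with the Hom into injectives. Taking $D^{\mydot}:=\myR\Hom^{\mydot}_A(C^{\mydot},\omega_A^{\mydot})$, which lies in $D^-_{\coherent}(A)$ by the defining property of $\omega_A^{\mydot}$, and combining with the previous step, the right-hand side of the stated map becomes
\[
\myR\Hom^{\mydot}_A(D^{\mydot},I)\qis\myR\Hom^{\mydot}_A(D^{\mydot},\myR\Gamma_m(\omega_A^{\mydot}))\qis\myR\Gamma_m\,\myR\Hom^{\mydot}_A(D^{\mydot},\omega_A^{\mydot}).
\]
By biduality against the dualizing complex, $\myR\Hom^{\mydot}_A(D^{\mydot},\omega_A^{\mydot})\qis C^{\mydot}$, so this chain collapses to $\myR\Gamma_m(C^{\mydot})$, as desired. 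A brief check that the composite quasi-isomorphism agrees with the natural transformation of the theorem (which may be constructed, for example, via the pairing $\myR\Gamma_m(C^{\mydot})\otimes^L\myR\Hom(C^{\mydot},\omega_A^{\mydot})\to\myR\Gamma_m(\omega_A^{\mydot})\to I$) then finishes the argument.

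The main obstacle is the compatibility step: making rigorous that $\myR\Gamma_m$ passes through the inner argument of $\myR\Hom(-,\omega_A^{\mydot})$. One has to either work with an explicit bounded injective resolution $J^{\mydot}$ of $\omega_A^{\mydot}$ and a Cartan--Eilenberg or $K$-injective resolution of $D^{\mydot}$, or else invoke the general way-out lemma of \cite[I.7]{HartshorneResidues} to reduce to the case $C^{\mydot}=A$ (equivalently $D^{\mydot}=\omega_A^{\mydot}$), where the comparison degenerates to $\myR\Gamma_m(A)\qis\myR\Hom(\omega_A^{\mydot},I)$ and may be checked directly via Matlis duality applied to the cohomology of the injective resolution above.
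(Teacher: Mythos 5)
The paper does not prove Theorem \ref{LocalDualityForComplexes}; it is quoted from \cite[V.6.2]{HartshorneResidues} and \cite[2.4]{LipmanLocalCohomologyAndDuality}, so there is no internal proof to compare against. Your argument is essentially the classical Grothendieck--Hartshorne proof from those sources, and its overall structure --- reduce to $\myR \Gamma_m(\omega_A^{\mydot}) \qis I$ via the residual complex, commute $\myR\Gamma_m$ past $\myR\Hom^{\mydot}_A(D^{\mydot}, -)$, and close the loop with biduality --- is correct and complete in outline.

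Two details should be repaired. First, the residual complex is mis-indexed: for a normalized dualizing complex the injective resolution has $J^{-i} = \bigoplus_{\dim(A/p) = i} E(A/p)$, not $\bigoplus_{\height(p)=i} E(A/p)$; with your formula $E(A/m)$ would sit in degree $-d$, contradicting your next sentence. The correct statement is that the indexing is by the codimension function of $\omega_A^{\mydot}$, and the normalization (i.e.\ $\Ext^i(k,\omega_A^{\mydot})$ concentrated in degree $0$) is exactly what places $E(A/m)$ in degree $0$, since $\Hom_A(k, E(A/p)) = 0$ for $p \neq m$. Second, in the compatibility step the operative hypothesis is not that the targets are injective but that the source is coherent: the identity $\Gamma_m \Hom_A(M,N) = \Hom_A(M, \Gamma_m N)$ holds because $M$ is finitely generated. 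Resolving $D^{\mydot} \in D^-_{\coherent}(A)$ by a bounded-above complex $F^{\mydot}$ of finite free modules and forming the total complex $\Hom^{\mydot}_A(F^{\mydot}, J^{\mydot})$, one gets (because $J^{\mydot}$ is bounded) a \emph{bounded-below} complex of injective modules representing $\myR\Hom^{\mydot}_A(D^{\mydot},\omega_A^{\mydot}$); hence the termwise application of $\Gamma_m$ legitimately computes $\myR\Gamma_m$ and equals $\Hom^{\mydot}_A(F^{\mydot}, \Gamma_m J^{\mydot}) \qis \myR\Hom^{\mydot}_A(D^{\mydot}, I)$. So the ``main obstacle'' you worry about dissolves; no way-out argument is needed, though the final identification of this composite with the natural transformation in the statement (via the pairing you describe) should indeed be recorded.
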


Second, we state \emph{Grothendieck duality} for proper morphisms.

\begin{theorem}\cite[III.11.1, VII.3.4]{HartshorneResidues}, \cite{ConradGDualityAndBaseChange}
\label{GrothendieckDuality}
Let $f : X \rightarrow Y$ be a proper morphism of noetherian schemes of finite dimension.  Suppose $\sF^{\mydot} \in
D^{-}_{\quasicoherent}(X)$ and $\sG^{\mydot} \in D^{+}_{\coherent}(Y)$.  Then the duality morphism
\[
\myR f_* \myR \sHom_{\O_X}^{\mydot}(\sF^{\mydot}, f^{!} \sG^{\mydot}) \rightarrow \myR \sHom_{\O_Y}^{\mydot}(\myR f_* \sF^{\mydot},
\sG^{\mydot}),
\]
is an isomorphism.
\end{theorem}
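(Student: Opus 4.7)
The plan is to follow the structure of Hartshorne's proof in \emph{Residues and Duality}: construct $f^{!}$ piecewise, verify the duality morphism on each piece, and glue. The construction naturally proceeds in three stages.

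First, I would handle the two fundamental cases. For a finite morphism $f$, define $f^{!} \sG^{\mydot}$ to be $\myR \sHom_{\O_Y}(f_* \O_X, \sG^{\mydot})$ regarded as an $\O_X$-module via its $f_* \O_X$-module structure; the duality isomorphism then follows from the sheafified adjunction between extension of scalars along $\O_Y \to f_* \O_X$ and $\sHom_{\O_Y}(f_* \O_X, -)$. For a smooth morphism $f$ of relative dimension $n$, define $f^{!} \sG^{\mydot} = f^* \sG^{\mydot} \otimes \omega_{X/Y}[n]$. The essential input in the smooth case is Serre duality for the projection $p : \mathbb{P}^n_Y \to Y$, which I would verify by explicit \v{C}ech computation of $\myR p_* \O(m)$, construction of a trace map $\myR p_* \omega_{\mathbb{P}^n_Y/Y}[n] \to \O_Y$, and reduction of the general duality assertion to evaluation against twisting sheaves.

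Second, I would extend to arbitrary projective morphisms by factoring any such $f : X \to Y$ as $f = p \circ i$, with $i$ a closed immersion into some $\mathbb{P}^n_Y$ and $p$ the projection, and setting $f^{!} = i^{!} \circ p^{!}$. The duality isomorphism for $f$ then comes by composing those already proved for $i$ and $p$; independence of the chosen factorization requires the pseudo-functorial identity $(g \circ f)^{!} \qis f^{!} \circ g^{!}$, which must be verified by hand in each overlap case (two smooth, two finite, smooth followed by finite, and so on).

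Third, I would reduce arbitrary proper morphisms to the projective case using Chow's lemma: for proper $f$ there is a birational projective $g : X' \to X$ with $f \circ g$ also projective. The definition of $f^{!}$ for $f$, and the duality isomorphism for $f$, are then extracted from the data for $g$ and $f \circ g$ by a cohomological descent argument, exploiting that $\myR g_* \O_{X'}$ can be arranged to be quasi-isomorphic to $\O_X$ after further blowups. The main obstacle is precisely this last descent: making it rigorous requires the full apparatus of pseudo-functoriality on a $2$-category of scheme maps, which is considerably more delicate than the verifications for the two fundamental cases. A cleaner modern route, following Neeman and Lipman, is to invoke Brown representability to obtain $f^{!}$ directly as a right adjoint to $\myR f_* : D_{\quasicoherent}(X) \to D_{\quasicoherent}(Y)$ on the unbounded derived category, and then show that this abstract construction restricts to the explicit formulas above on $D^+_{\coherent}$; this still leaves the verification that the abstract right adjoint preserves coherence and boundedness.
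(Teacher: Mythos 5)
This theorem is not proved in the paper at all; it is quoted verbatim from Hartshorne's \emph{Residues and Duality} and Conrad's monograph, and your outline is a faithful summary of exactly the strategy those cited sources carry out (finite and smooth cases, projective morphisms by factorization through $\mathbb{P}^n_Y$, the general proper case via Chow's lemma and Deligne's descent, with Neeman--Lipman as the modern alternative). Since your sketch matches the approach of the references the paper defers to, and you correctly flag the genuinely delicate points (pseudo-functoriality of $f^{!}$ and the non-projective proper case) rather than pretending they are routine, there is nothing to correct --- though of course what you have written is a roadmap, not a proof, and filling it in is precisely the book-length undertaking the paper cites rather than reproduces.
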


\begin{remark}
The case we will usually consider is when $\sG^{\mydot}$ is a dualizing complex for $Y$ so that $f^{!}(\omega_Y^{\mydot}) =
\omega_X^{\mydot}$, giving us the following form of duality
\[
\myR f_* \myR \sHom_{\O_X}^{\mydot}(\sF^{\mydot}, \omega_X^{\mydot}) \qis \myR \sHom_{\O_Y}^{\mydot}(\myR f_* \sF^{\mydot},
\omega_Y^{\mydot}).
\]
In many cases $\sF^{\mydot}$ will be replaced by a module (viewed as a complex in degree zero).  One should take particular note that
when $f$ corresponds to a finite map of rings $A \rightarrow B$, then $f^!$ has a particularly nice interpretation.  Specifically,
$f^!(\blank) = \myR \Hom^{\mydot}_A(B, \blank)$; see \cite[III.6, VI]{HartshorneResidues}.
\end{remark}

\section{Seminormality}

In this section we lay out the basic definitions of seminormality and weak normality.  Recall that we have assumed that all rings are excellent.  

\begin{definition} \cite{AndreottiBombieri}, \cite{GrecoTraversoSeminormal}, \cite{SwanSeminormality}
\label{Subintegral}
A finite integral extension of reduced rings $i : A \subset B$ is said to be \emph{subintegral} (respectively \emph{weakly
subintegral}) if
\begin{itemize}
\item[(i)]  it induces a bijection on the prime spectra, and
\item[(ii)]  for every prime $P \in \Spec B$, the induced map on the residue fields, $k(i^{-1}(P)) \rightarrow k(P)$, is an isomorphism
(respectively, is a purely inseparable extension of fields).
\end{itemize}
\end{definition}

\begin{remark}
A subintegral extension of rings has also been called a quasi-isomorphism; see for example \cite{GrecoTraversoSeminormal}.
\end{remark}

\begin{remark}
Condition (ii) is unnecessary in the case of extensions of rings of finite type over an algebraically closed field of characteristic
zero.
\end{remark}

\begin{definition} \cite[1.2]{GrecoTraversoSeminormal}, \cite[2.2]{SwanSeminormality}
\label{SeminormalityExtensionDefinition}
Let $A \subset B$ be a finite extension of reduced rings.  Define $\SIClo{A}{B}$ to be the (unique) largest subextension of $A$ in $B$ such
that $A \subset \SIClo{A}{B}$ is subintegral.  This is called the \emph{seminormalization of $A$ inside $B$}.  $A$ is said to be
\emph{seminormal in $B$} if $A = \SIClo{A}{B}$.
\end{definition}

\begin{definition} \cite{YanagiharaWeaklyNormal}, \cite[1.1]{ReidRobertsSinghWeak}
\label{WeakNormalityExtensionDefinition}
Let $A \subset B$ be a finite extension of reduced rings.  Define $\WSIClo{A}{B}$ to be the (unique) largest subextension of $A$ in $B$ such
that $A \subset \WSIClo{A}{B}$ is weakly subintegral.  This is called the \emph{weak normalization of $A$ inside $B$}.  $A$ is said to be
\emph{weakly normal in $B$} if $A = \WSIClo{A}{B}$.
\end{definition}

\begin{definition}
\label{SemiWeakNormalityDefinition}
A reduced ring $A$ is said to be \emph{seminormal} (respectively \emph{weakly normal}) if it is seminormal (respectively weakly normal)
in its integral closure $\overline A$ (in its total field of fractions).  Its \emph{seminormalization} is $\SIClo{A}{\overline A}$ and
will be denoted by $\SN{A}$, respectively its \emph{weak normalization} is $\WSIClo{A}{\overline A}$ which will be denoted by $\WN{A}$.
If $X = \Spec A$ is a scheme, $X^{\sn}$ will be used to denote the scheme $\Spec \SIClo{A}{\overline A}$ and $X^{\wn}$ will be used to
denote the scheme $\Spec \WSIClo{A}{\overline A}$.  A scheme is said to be \emph{seminormal} (respectively, \emph{weakly normal}), if all
of its local rings are seminormal (respectively, weakly normal).
\end{definition}

\begin{remark}
Note the following set of implications.
\[
\xymatrix{
\text{Normal} \ar@{=>}[r] & \text{Weakly Normal} \ar@{=>}[r] & \text{Seminormal}
}
\]
Consider the following examples:
\begin{itemize}
\item[(i)] The union of two axes in $\bA^2$, $\Spec k[x,y]/(xy)$, is both weakly normal and seminormal, but not normal (an irreducible node is seminormal as well).
\item[(ii)]  The union of three lines through the origin in $\bA^2$, $\Spec k[x,y]/(xy(x-y))$, is neither seminormal nor weakly normal.
\item[(iii)] The union of three axes in $\bA^3$, $\Spec k[x,y,z]/(x,y) \cap (y,z) \cap (x,z)$, is both seminormal and weakly normal.  In fact, it is isomorphic to the seminormalization of (ii).
\item[(iv)]  The pinch point $\Spec k[a,b,c]/(a^2 b - c^2) \cong \Spec k[x^2, y, xy]$ is both seminormal and weakly normal as long as the characteristic of $k$ is not equal to two.  In the case that $\Char k = 2$, then the pinch point is seminormal but not weakly normal.  Notice that if $\Char k = 2$ then the inclusion $k[x^2, y, xy] \subset k[x,y]$ induces a bijection on spectra.  Furthermore the induced maps on residue fields are isomorphisms at all closed points.  However, at the generic point of the singular locus $P = (y, xy)$, the induced extension of residue fields is purely inseparable.  This proves that it is not weakly normal.
\end{itemize}
\end{remark}

\begin{remark}
Seminormality and weak normality localize; see \cite{GrecoTraversoSeminormal} and \cite{YanagiharaWeaklyNormal}.  In particular, a scheme is seminormal if and only if it
has an affine cover by the spectrums of seminormal rings.
\end{remark}

There are other characterizations of weakly normal and seminormal which are of a more algebraic nature, and are often very useful.

\begin{proposition} \cite[1.4]{VitulliSeminormal}
\label{SemiNormalEquivalentConditions}
Let $A \subset B$ be a finite integral extension of reduced rings; the following are then equivalent:
\begin{itemize}
\item[(i)]  $A$ is seminormal in $B$
\item[(ii)]  For a fixed pair of relatively prime integers $e > f > 1$, $A$ contains each element $b \in B$ such that $b^e, b^f \in A$.
(also see \cite{HamannSeminormal} and \cite{SwanSeminormality} for the case where $e = 2$, $f = 3$).
\end{itemize}
\end{proposition}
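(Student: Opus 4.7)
I would prove the two implications separately, both by analyzing the subring $A[b]$.

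For (i) $\Rightarrow$ (ii), suppose $A$ is seminormal in $B$ and fix $b \in B$ with $b^e, b^f \in A$. The goal is to show $A \subset A[b]$ is subintegral, since seminormality will then force $A[b] = A$. Because $\gcd(e, f) = 1$, the numerical semigroup generated by $e$ and $f$ contains every integer $N \geq (e-1)(f-1)$, so $b^N \in A$ for all such $N$, making $A[b]$ a finite $A$-module (and it is reduced as a subring of $B$). For each $\mathfrak{p} \in \Spec A$ I would compute the fiber
\[
C := A[b] \otimes_A k(\mathfrak{p}) = k(\mathfrak{p})[\bar b], \qquad \bar b^e = \alpha := \overline{b^e}, \quad \bar b^f = \beta := \overline{b^f}.
\]
In the field $k(\mathfrak{p})$ we have $\alpha^f = \bar b^{ef} = \beta^e$, so either both $\alpha$ and $\beta$ vanish or both are nonzero. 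In the first case $\bar b$ is nilpotent (since $\bar b^f = 0$) and $C$ is local with maximal ideal $(\bar b)$ and residue field $k(\mathfrak{p})$. In the second case $\alpha, \beta$ are units, so $\bar b$ is a unit in $C$; choosing integers $m, n$ with $me + nf = 1$ gives $\bar b = \alpha^m \beta^n \in k(\mathfrak{p})$, hence $C = k(\mathfrak{p})$. Either way the fiber consists of a single point with residue field $k(\mathfrak{p})$, so $A \subset A[b]$ is subintegral, and seminormality yields $b \in A$.

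For (ii) $\Rightarrow$ (i), I would reduce to the $(e, f) = (3, 2)$ case attributed to Hamann and Swan. Given $b \in B$ with $b^2, b^3 \in A$, the numerical semigroup generated by $2$ and $3$ contains every integer $n \geq 2$, so in particular $b^e, b^f \in A$; applying (ii) yields $b \in A$, establishing the Hamann--Swan criterion for $A$ in $B$. The main obstacle is the fiber analysis in (i) $\Rightarrow$ (ii), especially the degenerate case $\alpha = \beta = 0$, where one must verify that $C$ is local with residue field exactly $k(\mathfrak{p})$ despite containing nilpotents; everything else amounts to bookkeeping with numerical semigroups and the definitions.
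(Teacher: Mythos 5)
The paper offers no proof of this proposition at all --- it is quoted directly from Leahy--Vitulli \cite[1.4]{VitulliSeminormal} --- so there is no argument of the paper's to compare against; I will simply assess your reconstruction. Your proof of (i) $\Rightarrow$ (ii) is correct and complete: $A[b]$ is module-finite over $A$ because $b^N \in A$ for every $N$ at least the Frobenius bound $(e-1)(f-1)$ of the semigroup $\langle e,f\rangle$, it is reduced as a subring of $B$, and the fiber computation works as you say --- the relation $\alpha^f=\beta^e$ forces $\alpha$ and $\beta$ to vanish simultaneously, the B\'ezout identity $me+nf=1$ gives $\bar b=\alpha^m\beta^n\in k(\mathfrak{p})$ in the unit case, and in the nilpotent case $C$ is local with $C/(\bar b)\cong k(\mathfrak{p})$ (nonzero by lying over). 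Hence each fiber of $\Spec A[b]\to \Spec A$ is a single point with trivial residue field extension, $A\subset A[b]$ is subintegral, and $A[b]\subseteq \SIClo{A}{B}=A$ forces $b\in A$. The one place your write-up does not stand on its own is (ii) $\Rightarrow$ (i): the reduction to the pair $(2,3)$ via the semigroup $\langle 2,3\rangle$ is fine, but the implication ``($b^2,b^3\in A$ implies $b\in A$) implies $A$ is seminormal in $B$'' is where all of the substance of that direction lives --- one must produce, whenever $A\subsetneq\SIClo{A}{B}$, an elementary subintegral element $b\in\SIClo{A}{B}\setminus A$ with $b^2,b^3\in A$. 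You outsource this to \cite{HamannSeminormal} and \cite{SwanSeminormality}, which is defensible since the proposition's own parenthetical defers to those sources for exactly that case, but you should be aware that the cited lemma, not the numerical-semigroup bookkeeping, carries the weight of the converse.
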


\begin{proposition}  \cite[4.3, 6.8]{ReidRobertsSinghWeak}
\label{WeaklyNormalCondition}
Let $A \subset B$ be a finite integral extension of reduced rings where $A$ contains $\bF_p$ for some prime $p$; the following are then
equivalent:
\begin{itemize}
\item[(i)]  $A$ is weakly normal in $B$.
\item[(ii)]  If $b \in B$ and $b^p \in A$ then $b \in A$.
\end{itemize}
\end{proposition}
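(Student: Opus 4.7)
The plan is to prove the two implications separately.

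For (i)$\Rightarrow$(ii): Given $b \in B$ with $b^p \in A$, I form the $A$-subalgebra $A[b] \subseteq B$, which is a finite extension of $A$ (generated as an $A$-module by $1, b, \ldots, b^{p-1}$). I will show that $A \subseteq A[b]$ is weakly subintegral, which combined with weak normality of $A$ in $B$ forces $A[b] \subseteq \WSIClo{A}{B} = A$ and hence $b \in A$. For weak subintegrality, I analyze the fiber of $\Spec A[b] \to \Spec A$ over a prime $Q$: it is the spectrum of a quotient of $k(Q)[T]/(T^p - \overline{b^p})$, and in characteristic $p$ this polynomial factors as $(T - \alpha)^p$ over the algebraic closure of $k(Q)$ (where $\alpha^p = \overline{b^p}$), so the fiber has a unique prime whose residue field embeds in $k(Q)(\alpha)$, a purely inseparable extension of $k(Q)$.

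For (ii)$\Rightarrow$(i): I need to show every $b \in \WSIClo{A}{B}$ lies in $A$. My plan is to construct a finite chain
\[
A = A_0 \subseteq A_1 \subseteq \cdots \subseteq A_n \subseteq B, \qquad b \in A_n,
\]
with each step of the form $A_i = A_{i-1}[c_i]$ for some $c_i \in B$ satisfying $c_i^p \in A_{i-1}$. Given such a chain, a short induction closes the argument: assuming $A_{i-1} \subseteq A$, we have $c_i^p \in A_{i-1} \subseteq A$, so hypothesis (ii) applied to $c_i \in B$ gives $c_i \in A$; hence $A_i = A_{i-1}[c_i] \subseteq A$. After $n$ steps, $b \in A_n \subseteq A$.

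The main obstacle is producing such a tower. The extension $A \subseteq A[b]$ is weakly subintegral (as a sub-extension of $A \subseteq \WSIClo{A}{B}$), and localizing at the multiplicative set $S$ of non-zerodivisors of $A$ reveals a transparent local picture: the total quotient ring $S^{-1}A$ is a product $\prod_i k(Q_i)$ indexed by the minimal primes of $A$, and $S^{-1}A[b]$ is a finite reduced extension with the same Spec, hence a product $\prod_i k_i$ with each $k_i/k(Q_i)$ finite and purely inseparable. Such a local model is manifestly generated by iterated $p$-th roots, so the desired tower exists after inverting $S$. The technical heart of the proof is to refine this generic tower into one that lives inside $B$ itself without denominators, for which I expect to combine reducedness of $B$ with the bijection-on-$\Spec$ and purely inseparable residue field conditions holding at \emph{every} prime of $A$, not merely the minimal ones.
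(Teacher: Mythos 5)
The paper itself gives no argument for this proposition: it is quoted verbatim from Reid--Roberts--Singh, so your attempt is really being measured against the content of the cited theorem rather than against an in-text proof. Your direction (i)$\Rightarrow$(ii) is correct and essentially complete: $A[b]$ is reduced (being a subring of $B$) and finite over $A$ (spanned by $1,b,\dots,b^{p-1}$), and for each $Q\in\Spec A$ the fiber $A[b]\tensor_A k(Q)$ is a nonzero (by lying over) quotient of the local Artinian ring $k(Q)[T]/(T^p-\overline{b^p})$, whose residue field is purely inseparable over $k(Q)$; so $A\subseteq A[b]$ is weakly subintegral, hence $A[b]\subseteq\WSIClo{A}{B}=A$.

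The gap is in (ii)$\Rightarrow$(i), and it sits exactly where you flag it. Your reduction of the implication to the existence of the tower is fine --- in fact you only need the weaker claim that a proper weakly subintegral subextension $A\subsetneq C\subseteq B$ contains some $b\notin A$ with $b^p\in A$. But that claim (equivalently, that every $b\in\WSIClo{A}{B}$ satisfies $b^{p^n}\in A$ for some $n$) \emph{is} the cited theorem \cite[6.8]{ReidRobertsSinghWeak}, essentially Yanagihara's structure theorem for weakly subintegral extensions, and your localization sketch does not deliver it. Inverting the non-zerodivisors $S$ of $A$ only shows that for each such $b$ there exist $n$ and $s\in S$ with $s\,b^{p^n}\in A$; promoting $b^{p^n}\in S^{-1}A$ to $b^{p^m}\in A$ is precisely where the hypotheses at the non-minimal primes must enter, and ``I expect to combine reducedness with the conditions at every prime'' is a statement of intent, not an argument. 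To see that something genuinely nontrivial is being skipped, note that the same generic picture holds for the normalization of the union of two axes, $A=k[x,y]/(xy)\subseteq C=k[x]\times k[y]$: here $S^{-1}A=S^{-1}C$, yet $(1,0)\in C$ has no $p$-power in $A$. So the fiberwise conditions away from the minimal primes carry all the weight, and using them requires a real argument (e.g., localizing at a minimal prime of $\supp_A(C/A)$ and inducting on the length of $(C/A)_P$ via the conductor). As written, either that argument must be supplied or the appeal to [RRS, 6.8] retained.
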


The property of seminormality is also intrinsic in the following sense:

\begin{theorem} \cite{SwanSeminormality}
\label{SeminormalityIntrinsic}
A reduced ring $A$ is seminormal if and only if for every pair of elements $b, c \in A$ such that $b^3 = c^2$, there exists a unique $a
\in A$ such that $a^2 = b$ and $a^3 = c$.
\end{theorem}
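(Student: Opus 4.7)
The plan is to prove both directions using Proposition \ref{SemiNormalEquivalentConditions} with coprime exponents $e=3$, $f=2$, together with a uniqueness lemma that holds in any reduced ring. The uniqueness lemma I would establish first: if $R$ is reduced and $a,a'\in R$ satisfy $a^2=(a')^2$ and $a^3=(a')^3$, then $a=a'$. Setting $b=a^2=(a')^2$, the relation $(a-a')(a+a')=0$ multiplied by $a$ yields $(a-a')b + (a-a')aa'=0$, while $(a-a')(a^2+aa'+(a')^2)=(a-a')(2b+aa')=0$. Combining these shows $(a-a')b=0$ and $(a-a')aa'=0$, so
\[
(a-a')^3 = (a-a')(2b - 2aa') = 0,
\]
and reducedness forces $a=a'$. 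This will handle uniqueness in both directions, and will also be the key tool in the converse.

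For the forward direction, assume $A$ is seminormal and $b,c\in A$ satisfy $b^3=c^2$. Since $A$ is reduced and noetherian, its total ring of fractions $K$ is a finite product of fields; on each component where $b\neq 0$ set $a:=c/b$, and on the remaining components (where $c=0$ as well) set $a:=0$. A direct check gives $a^2=b$ and $a^3=c$ in $K$. Because $a$ satisfies the monic relation $x^2-b=0$ with $b\in A$, we have $a\in\overline{A}$. Now $a^2,a^3\in A$ with $\gcd(2,3)=1$, so Proposition \ref{SemiNormalEquivalentConditions} applied to $A\subset\overline{A}$ (a finite extension by excellence) yields $a\in A$. Uniqueness of $a$ is immediate from the lemma above.

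For the converse, assume the root existence property holds and let $d\in\overline{A}$ satisfy $d^2,d^3\in A$; by Proposition \ref{SemiNormalEquivalentConditions} it suffices to show $d\in A$. Put $b=d^2$, $c=d^3$, so that $b^3=d^6=c^2$. By hypothesis there is $a\in A$ with $a^2=b$ and $a^3=c$. Applying the uniqueness lemma inside the reduced ring $\overline{A}$ to the pair $a,d$ (both satisfying $x^2=b$ and $x^3=c$) gives $d=a\in A$, as required.

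The main obstacle is the construction of $a$ in the forward direction when $b$ is a zero-divisor: the naive formula $a=c/b$ only makes sense in the total ring of fractions, and one must observe that the element so defined lies in the integral closure before invoking the coprime-exponent form of seminormality. Uniqueness in either direction is then a purely formal consequence of the $(a-a')^3=0$ identity and reducedness.
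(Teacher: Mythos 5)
Your proof is correct. The paper itself gives no argument for this statement --- it is simply quoted from \cite{SwanSeminormality} --- so there is no internal proof to match; what you have done is derive the intrinsic characterization from Proposition \ref{SemiNormalEquivalentConditions} (with $e=3$, $f=2$) plus a uniqueness lemma valid in any reduced ring, and every step checks out. The identity $(a-a')^3=0$ follows as you compute (even more directly: expand $(a-a')^3$ and use $a^2a'=(a')^3=a^3$ and $a(a')^2=a^3$); the componentwise construction of $a=c/b$ in the total quotient ring $K=\prod K_i$ is the right way to handle $b$ being a zero-divisor, and $a^2=b\in A$ does place $a$ in $\overline{A}$; and the converse is an immediate application of the uniqueness lemma inside the reduced ring $\overline{A}$. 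The one thing worth flagging is that your route genuinely depends on the paper's standing hypotheses (noetherian and excellent) in two places: the finiteness of $A\subset\overline{A}$, without which Proposition \ref{SemiNormalEquivalentConditions} does not apply as stated, and the decomposition of $K$ into a finite product of fields. Swan's own treatment is arranged precisely to avoid such hypotheses --- he essentially takes the $(b,c)$-condition as the definition and builds the seminormalization from it, obtaining the equivalence for arbitrary reduced commutative rings --- so your argument is a clean and legitimate shortcut in the present setting, but it proves a slightly less general theorem than the one Swan's paper supplies.
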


\begin{remark}
There is no equivalently intrinsic characterization of weak normality.  See \cite{YanagiharaWeaklyNormal} for further discussion.
\end{remark}

\begin{corollary}
\label{GlobalSectionsOfSeminormality}
Suppose $X$ is a seminormal scheme.  Then for any open $U \subset X$, $\Gamma(U, \O_X)$ is a seminormal ring.
\end{corollary}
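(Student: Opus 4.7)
The plan is to use the intrinsic characterization of seminormality from Theorem \ref{SeminormalityIntrinsic}, together with the fact that seminormality localizes (so each stalk $\O_{X,p}$ is a seminormal local ring) and a standard sheaf-theoretic gluing argument based on uniqueness.

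First I would check that $R := \Gamma(U, \O_X)$ is reduced. Since each stalk $\O_{X,p}$ is seminormal (and so in particular reduced), and a section of a sheaf whose stalks are reduced must itself be zero if each of its germs is zero, no nonzero element of $R$ can be nilpotent. By Theorem \ref{SeminormalityIntrinsic}, it then suffices to show: for every pair $b, c \in R$ with $b^3 = c^2$, there exists a unique $a \in R$ with $a^2 = b$ and $a^3 = c$.

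Fix such $b$ and $c$, and for each $p \in U$ let $b_p, c_p$ denote their germs in $\O_{X,p}$. Since $\O_{X,p}$ is seminormal and $b_p^3 = c_p^2$, Theorem \ref{SeminormalityIntrinsic} provides a unique $a_p \in \O_{X,p}$ with $a_p^2 = b_p$ and $a_p^3 = c_p$. Choose an open neighborhood $V_p \subset U$ of $p$ on which $a_p$ is represented by an actual section $\tilde a_p \in \Gamma(V_p, \O_X)$. The sections $\tilde a_p^2 - b$ and $\tilde a_p^3 - c$ have zero germ at $p$, so after shrinking $V_p$ we may assume both equalities hold on all of $V_p$. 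This produces a cover $\{V_p\}_{p \in U}$ of $U$ and local sections $\tilde a_p \in \Gamma(V_p, \O_X)$ with $\tilde a_p^2 = b|_{V_p}$ and $\tilde a_p^3 = c|_{V_p}$.

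On any overlap $V_p \cap V_q$ and for any point $q' \in V_p \cap V_q$, the germs of $\tilde a_p$ and $\tilde a_q$ at $q'$ both satisfy the defining equations for $a_{q'}$ in $\O_{X, q'}$; by the uniqueness clause of Theorem \ref{SeminormalityIntrinsic}, they agree in the stalk. Since this holds at every point of $V_p \cap V_q$ and the structure sheaf is separated, $\tilde a_p$ and $\tilde a_q$ agree on $V_p \cap V_q$. The sheaf axiom then yields a global section $a \in R$ with $a^2 = b$ and $a^3 = c$, and uniqueness of $a$ in $R$ follows from the uniqueness of the stalk-local $a_p$. This gives the required property, so $R$ is seminormal. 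There is no serious obstacle here; the only point that requires care is passing from equality in stalks to equality on a neighborhood when producing the local sections $\tilde a_p$, which uses coherence of the germs of $\tilde a_p^2 - b$ and $\tilde a_p^3 - c$.
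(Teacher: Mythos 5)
Your proof is correct and follows essentially the same route as the paper: both reduce to Swan's intrinsic characterization (Theorem \ref{SeminormalityIntrinsic}) and glue the unique local solutions $a$ of $a^2=b$, $a^3=c$ using the uniqueness clause. The only cosmetic difference is that you produce the local solutions in stalks and spread them out, whereas the paper works directly on an affine cover by spectra of seminormal rings.
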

\begin{proof}
Without loss of generality we may assume that $U = X$.  Note that $\Gamma(X, \O_X)$ is necessarily a reduced ring since $X$ is a
reduced scheme.  Choose an affine cover of $X$ by the spectrums of seminormal rings, $A_i$.  Suppose there exist elements $b, c \in
\Gamma(X, \O_X)$ such that $b^3 = c^2$.  Then these elements have images in each $A_i$, which we will call $b_i$ and $c_i$
respectively, and note that these images still satisfy the relation $b_i^3 = c_i^2$.  Thus, in each $A_i$ there exists a unique $a_i$
such that $a_i^2 = b_i$ and $a_i^3 = c_i$.  The uniqueness of these $a_i$ guarantee that they will glue together to an element $a$ of
$\Gamma(X, \O_X)$.  But then, we must also have $a^2 = b$ and $a^3 = c$ and because $\Gamma(X, \O_X)$ is reduced, $a$ is the unique
element satisfying this property; see \cite{SwanSeminormality}.
\end{proof}

\section{$F$-injective singularities}

Let $R$ be a ring containing a field of characteristic $p$.  By the \emph{action of Frobenius} we mean the $p$th power map, $F : R
\rightarrow R$, which sends $x$ to $x^p$.  If $R$ is reduced, this map may be reinterpreted as $R^p \rightarrow R$ or $R \rightarrow
R^{1/p}$.  We will use ${}^1 R$ to denote $R$ viewed as an $R$-module via the action of Frobenius.  Recall that a ring is said to be
\emph{$F$-finite} if ${}^1 R$ is a finite $R$-module.  Note that a ring of finite type over a finite field is always $F$-finite.

\begin{definition} \cite{FedderFPureRat}
\label{DefinitionFInjective}
Suppose $(R,m)$ is a reduced local ring containing a field of characteristic $p$.  We say that $R$ is \emph{$F$-injective} if the map
$F : H^i_m(R) \rightarrow H^i_m({}^1R)$, induced by the action of Frobenius, injects for all $i$.
\end{definition}

\begin{remark}
\label{FPureRemark}
There is another closely related class of singularities called $F$-pure singularities.  A reduced local ring $R$ is said to be \emph{$F$-pure} if the Frobenius map $F: R \rightarrow {}^1 R$ is a pure map of rings.  If $R$ is $F$-finite, this is equivalent to the map splitting as a map of $R$-modules. By applying $\myR \Gamma_m$ to such a splitting, it is obvious that $F$-finite $F$-pure local rings are $F$-injective.  See \cite{FedderFPureRat} for additional discussion of $F$-pure and $F$-injective rings.
\end{remark}

The property of $F$-injectivity localizes, as proven in the proposition below.  I would like to thank Nobuo Hara for suggesting the
proof used in this paper.  This fact certainly was known previously, but I do not know of a reference.

\begin{proposition}
\label{FInjectiveLocalizes}
Suppose $(S,m)$ is an $F$-finite $F$-injective local ring with a dualizing complex and $P \in \Spec S$.  Then $(S_P, P)$ is an $F$-injective local ring.
\end{proposition}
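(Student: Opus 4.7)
The plan is to dualize the $F$-injectivity condition via local duality, converting it into a surjectivity statement for a map of coherent $S$-modules that localizes automatically; the result then follows by a second application of local duality over $S_P$.

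Fix a normalized dualizing complex $\omega_S^{\mydot}$ for $S$. Since $S$ is $F$-finite, Frobenius $F : S \to {}^1 S$ is a finite morphism, so Grothendieck duality (Theorem \ref{GrothendieckDuality}), together with the canonical identification $F^{!} \omega_S^{\mydot} \qis \omega_S^{\mydot}$ on the local ring $S$, gives an isomorphism $\myR \Hom^{\mydot}_S({}^1 S, \omega_S^{\mydot}) \qis {}^1 \omega_S^{\mydot}$. Applying $\myR \Hom^{\mydot}_S(-, \omega_S^{\mydot})$ to the Frobenius $F$ thus produces a morphism $\Phi : {}^1 \omega_S^{\mydot} \to \omega_S^{\mydot}$ in $D^{+}_{\coherent}(S)$. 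By Theorem \ref{LocalDualityForComplexes} and Matlis duality (which is exact and faithful on Artinian modules), the map $F : H^i_m(S) \to H^i_m({}^1 S)$ is injective for every $i$ if and only if
\[
h^{-i}(\Phi) : {}^1 h^{-i}(\omega_S^{\mydot}) \to h^{-i}(\omega_S^{\mydot})
\]
is surjective for every $i$; hence $F$-injectivity of $S$ is equivalent to surjectivity of every $h^{-i}(\Phi)$.

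Now assume $S$ is $F$-injective and fix $P \in \Spec S$. Each $h^{-i}(\omega_S^{\mydot})$ is a finitely generated $S$-module, so surjectivity of $h^{-i}(\Phi)$ passes to the localization at $P$. Since localization is a flat base change, $(\omega_S^{\mydot})_P$ is a dualizing complex for $S_P$ and agrees with $\omega_{S_P}^{\mydot}$ up to the shift needed for normalization; moreover both $\myR \Hom$ and the upper shriek $F^{!}$ of a finite morphism are compatible with flat base change, so $\Phi_P$ coincides, up to the same shift, with the analogous dual-Frobenius map $\Phi' : {}^1 \omega_{S_P}^{\mydot} \to \omega_{S_P}^{\mydot}$ built intrinsically from $S_P$. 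Surjectivity of each $h^{-i}(\Phi)_P$ therefore yields surjectivity of every $h^{-j}(\Phi')$, and a second application of Theorem \ref{LocalDualityForComplexes} for the local ring $(S_P, PS_P)$ recovers injectivity of Frobenius on each $H^j_{PS_P}(S_P)$, proving that $S_P$ is $F$-injective.

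I expect the principal technical obstacle to be the identification of $\Phi_P$ with $\Phi'$: this requires verifying that the trace map in Grothendieck duality commutes with the flat base change $\Spec S_P \to \Spec S$, together with careful bookkeeping of the normalization shift of the dualizing complex under localization. Both are standard consequences of the formalism of $f^{!}$, but they warrant explicit attention, and the hypothesis that $S$ admits a dualizing complex is essential for setting up local duality in the first place.
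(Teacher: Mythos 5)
Your proposal is correct and follows essentially the same route as the paper: apply local duality and Matlis duality to convert $F$-injectivity into surjectivity of the induced maps $h^{-i}(\myR\Hom^{\mydot}_S({}^1S,\omega_S^{\mydot})) \to h^{-i}(\omega_S^{\mydot})$ of finitely generated modules, localize that surjectivity at $P$, and dualize back over $S_P$ (keeping track of the normalization shift). The only cosmetic difference is your explicit identification of $\myR\Hom^{\mydot}_S({}^1S,\omega_S^{\mydot})$ with ${}^1\omega_S^{\mydot}$ via $F^{!}$, which the paper does not bother to make.
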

\begin{proof}
\numberwithin{equation}{theorem}
Let $X = \Spec S$ and let $I$ be an injective hull of the residue field $S/m$.  Frobenius induces a map $\O_X \rightarrow F_* \O_X$.
Note $F_* \O_X$ can be identified with $\myR F_* \O_X$ since $F$ is an affine (in fact finite) map.   Since $(S,m)$ is $F$-injective,
we have
\[
H^i_m(X, \O_X) \rightarrow \bH^i_m(X, \myR F_* \O_X)
\]
injective for all $i$.  Therefore, by Matlis duality, we see that
\[
\Hom_S(\bH^i_m(X, \myR F_* \O_X), I) \rightarrow \Hom_S(H^i_m(X, \O_X), I)
\]
is surjective for each $i$, since $\Hom_S(\blank, I)$ is exact.

Finally we apply local duality to see that
\begin{align*}
\Hom_S(\Hom_S(h^{-i}(\myR \sHom_{\O_X}(\myR F_* \O_X, \omega_X^{\mydot}), I), I)) & \rightarrow \\
\Hom_S(\Hom_S(h^{-i}(\myR \sHom_{\O_X}(\O_X, \omega_X^{\mydot}), I), I))
\end{align*}
is surjective for each $i$ (note that we have pulled the cohomology functor, $h^i$ which became $h^{-i}$, through $\Hom_S(\blank, I)$, which we can do since $\Hom_S(\blank, I)$ is an exact functor).  We have also abused notation by freely switching between a sheaf on an affine scheme and the associated module.
Note that for a finite module $M$ over a local ring $R_P$,
\[
\Hom_{\hat{R_P}}^{\mydot}(\Hom_{R_P}^{\mydot}(M, I), I)
\]
is naturally identified with $\hat{M}$ where $\hat{\blank}$ is completion, \cite[3.5.4]{BrunsHerzog}, \cite[Exercise
3.2.14(b)]{BrunsHerzog} or see \cite[10.2.19]{BrodmannSharpLocalCohomology}.  Since completion of a local ring does not send finite
non-zero modules to zero (or, more generally, since completion is faithfully flat \cite[Section 24]{MatsumuraCommutativeAlgebra}), this implies that
\begin{equation}
\label{DualizedFInjection}
h^{-i}(\myR \sHom_{\O_X}^{\mydot}(\myR F_* \O_X, \omega_X^{\mydot})) \rightarrow h^{-i}(\myR \sHom^{\mydot}_{\O_X}(\O_X,
\omega_X^{\mydot}))
\end{equation}
is surjective for every $i$ (see the proof of Proposition \ref{GeneralizedKovacsSurjectivity} for a similar argument).  Now, when we localize the map
\ref{DualizedFInjection} at $P$, it remains surjective (since $R$ is $F$-finite).  Since all the relevant objects and functors behave well with respect to localization, we obtain a surjection
\[
h^{-i}(\myR \sHom_{\O_{X_P}}^{\mydot}(\myR F_* \O_{X_P}, \omega_{X_P}^{\mydot})) \rightarrow h^{-i}(\myR \sHom^{\mydot}_{\O_{X_P}}(\O_{X_P},
\omega_{X_P}^{\mydot})).
\]
Note that we are suppressing a shift, determined by the height of $P$, which normalizes the dualizing complex $\omega_{X_P}^{\mydot}$.  We then apply local duality again to complete the proof.
\end{proof}

\begin{definition}
Let $R$ be any ring containing a field of characteristic $p$ and suppose that $R$ is $F$-finite and has a dualizing complex.  Then we say that $R$ is \emph{$F$-injective} if $(R_P, P)$ is $F$-injective for every $P \in \Spec R$.
\end{definition}

\begin{remark}
If $R$ is $F$-finite and has a dualizing complex, note that $R$ is $F$-injective if and only if $(R_m, m)$ is $F$-injective for every maximal ideal in $\Spec R$.
Fedder defined an arbitrary ring to be $F$-injective if $R$ localized at every maximal ideal is $F$-injective.
\end{remark}

We also need to show that $F$-injective singularities are seminormal (we will actually show more: we will prove that they are weakly
normal).  It is known that $F$-pure singularities are seminormal; see \cite[5.31]{HochsterRobertsFrobeniusLocalCohomology}, also see \cite{GibsonSeminormalityInDimensionOne}.

\begin{lemma}
\label{LemmaFInjectiveImpliesSeminormal}
Suppose that $(R, m)$ is a reduced local ring of characteristic $p$, $X = \Spec R$ and that $X \setminus m$ is weakly normal.  Then $X$ is
weakly normal if and only if the action of Frobenius is injective on $H^1_m(R)$.
\end{lemma}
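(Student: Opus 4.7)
The plan is to reduce both directions to Proposition \ref{WeaklyNormalCondition} via an explicit description of $H^1_m(R)$. Assuming $\dim R \geq 1$ (the zero-dimensional case is trivial, since a reduced local ring of dimension zero is a field, hence weakly normal, with vanishing $H^1_m$), reducedness of $R$ gives $\Gamma_m(R) = 0$, and combining this with $H^1(X,\O_X) = 0$ (since $X = \Spec R$ is affine) in the long exact sequence arising from the triangle $\myR \Gamma_m(R) \to R \to \myR \Gamma(X \setminus \{m\}, \O_X)$ yields
\[
0 \to R \to \Gamma(X \setminus \{m\}, \O_X) \to H^1_m(R) \to 0,
\]
Frobenius-equivariantly with the $p$-th power map on $\Gamma(X \setminus \{m\}, \O_X)$.

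For the forward direction, assume $R$ is weakly normal and suppose for contradiction there is a nonzero class $\overline f \in H^1_m(R)$ killed by Frobenius. Lifting to $f \in \Gamma(X \setminus \{m\}, \O_X)$ gives $f \notin R$ but $f^p \in R$. The equation $x^p - f^p = 0$ shows $f$ is integral over $R$, so $f \in \overline R$; but then Proposition \ref{WeaklyNormalCondition} forces $f \in R$, a contradiction.

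For the converse, assume $X \setminus \{m\}$ is weakly normal and that Frobenius injects on $H^1_m(R)$. If $R$ failed to be weakly normal, Proposition \ref{WeaklyNormalCondition} would produce $b \in \overline R$ with $b^p \in R$ and $b \notin R$. Since weak normality localizes, each $R_P$ with $P \neq m$ is weakly normal; using $\overline{R_P} = (\overline R)_P$, the same proposition applied to $R_P$ gives $b \in R_P$ for every $P \neq m$. Therefore $b \in \bigcap_{P \neq m} R_P = \Gamma(X \setminus \{m\}, \O_X)$, and its class $\overline b$ in $H^1_m(R)$ is nonzero (as $b \notin R$) but killed by Frobenius (as $b^p \in R$), contradicting the injectivity hypothesis.

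The only real subtlety is establishing the identification $H^1_m(R) \cong \Gamma(X \setminus \{m\}, \O_X)/R$ and verifying its Frobenius equivariance; once this is in place, each implication is a short algebraic manipulation matched directly against the $p$-th power characterization of weak normality from Proposition \ref{WeaklyNormalCondition}.
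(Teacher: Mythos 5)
Your proof is correct and takes essentially the same route as the paper's: both rest on the Frobenius-equivariant identification $H^1_m(R) \cong \Gamma(X\setminus\{m\},\O_X)/R$ together with the $p$-th power criterion of Proposition \ref{WeaklyNormalCondition}. The only cosmetic difference is that the paper invokes the hypothesis on $X\setminus\{m\}$ by noting that $R\subset \WN{R}$ is an isomorphism away from $m$, whereas you localize the criterion at each $P\neq m$ to place your element $b$ in $\Gamma(X\setminus\{m\},\O_X)$; these amount to the same thing.
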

\begin{proof}
We assume that the dimension of $R$ is greater than $0$ since the zero-dimensional case is trivial.  Embed $R$ in its weak
normalization $R \subset \WN{R}$ (which is of course an isomorphism outside of $m$).  We have the following diagram of $R$-modules.
\[
\xymatrix{
0 \ar[r] & R \ar@{^{(}->}[r] \ar@{^{(}->}[d] & \Gamma(X \setminus m, \O_{X-m}) \ar[d]^{\cong} \ar@{->>}[r] & H^1_m(R) \ar[r] \ar[d] & 0 \\
0 \ar[r] & \WN{R} \ar@{^{(}->}[r] & \Gamma(X^{\wn} \setminus m, \O_{X^{\wn}-m}) \ar@{->>}[r] & H^1_m(\WN{R}) \ar[r] & 0 \\
}
\]
The left horizontal maps are injective because $R$ and $\WN{R}$ are reduced.  One can check that Frobenius is compatible with all of
these maps.  Now, $R$ is weakly normal if and only if $R$ is weakly normal in $\WN{R}$ if and only if every $r \in \WN{R}$ with $r^p \in
R$ also satisfies $r \in R$ by Proposition \ref{WeaklyNormalCondition}.

First assume that the action of Frobenius is injective on $H^1_m(R)$.  So suppose that there is such an $r \in \WN{R}$ with $r^p \in R$.
Then $r$ has an image in $\Gamma(X \setminus m, \O_{X-m})$ and therefore an image in $H^1_m(R)$.  But $r^p$ has a zero image in $H^1_m(R)$,
which means $r$ has zero image in $H^1_m(R)$, which guarantees that $r \in R$ as desired.

Conversely, suppose that $R$ is weakly normal.  Let $r \in \Gamma(X \setminus m, \O_{X-m})$ be an element such that the action of Frobenius
annihilates its image $\overline r$ in $H^1_m(R)$.  Since $r \in \Gamma(X \setminus m, \O_{X-m})$ we identify $r$ with a unique element of the
total field of fractions of $R$.  On the other hand, $r^p \in R$ so $r \in \WN{R} = R$.  Thus we obtain that $r \in R$ and so
$\overline r$ is zero as desired.
\end{proof}

\begin{theorem}
\label{FInjectiveImpliesWeaklyNormal}
Let $R$ be a reduced $F$-finite rin with a dualizing complex.  If $R$ is $F$-injective then $R$ is weakly normal (and thus in particular seminormal).
\end{theorem}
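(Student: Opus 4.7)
The plan is to reduce to the local case and then induct on dimension, using Lemma \ref{LemmaFInjectiveImpliesSeminormal} as the engine of the induction step and Proposition \ref{FInjectiveLocalizes} to propagate $F$-injectivity to localizations.

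First I would observe that weak normality is a local property (from the remark that weak normality localizes and the definition that a scheme is weakly normal iff all its local rings are), so it suffices to prove that $(R_m, m)$ is weakly normal for every maximal ideal $m \in \Spec R$. Since $R_m$ is again $F$-finite and admits a dualizing complex, and is $F$-injective by Proposition \ref{FInjectiveLocalizes} (applied to $R_m$ at each of its primes, which correspond to primes of $R$ contained in $m$), we may assume from the start that $R = (R, m)$ is an $F$-finite reduced local ring with dualizing complex, which is $F$-injective, and our goal is that $R$ is weakly normal.

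Next I would induct on $d = \dim R$. The base case $d = 0$ is immediate: a reduced local ring of dimension zero is a field, hence weakly normal. For the inductive step, let $P \in \Spec R$ be any non-maximal prime. By Proposition \ref{FInjectiveLocalizes}, the local ring $R_P$ is $F$-injective; it is also $F$-finite with dualizing complex and has $\dim R_P < d$, so by the induction hypothesis $R_P$ is weakly normal. Therefore every stalk of $\O_X$ on $X \setminus \{m\}$ (where $X = \Spec R$) is weakly normal, i.e.\ the punctured spectrum $X \setminus \{m\}$ is weakly normal.

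Now I am in position to apply Lemma \ref{LemmaFInjectiveImpliesSeminormal}: since $X \setminus \{m\}$ is weakly normal, $R$ will be weakly normal as soon as Frobenius acts injectively on $H^1_m(R)$. But $F$-injectivity of $R$ gives injectivity of Frobenius on $H^i_m(R)$ for all $i$, in particular for $i = 1$. This closes the induction and yields that $R$ is weakly normal; seminormality then follows from the implication \emph{weakly normal} $\Rightarrow$ \emph{seminormal}. The only potentially delicate point in this plan is confirming that the dimension-zero case and the punctured spectrum are handled correctly when $\dim R = 1$ (so that $X \setminus \{m\}$ is a disjoint union of spectra of fields, trivially weakly normal, and the lemma still applies) — but this is a routine verification rather than a genuine obstacle, since the real content has been absorbed into Lemma \ref{LemmaFInjectiveImpliesSeminormal} and Proposition \ref{FInjectiveLocalizes}.
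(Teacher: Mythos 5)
Your proposal is correct and takes essentially the same route as the paper: the paper's proof simply chooses a prime $P$ of minimal height among those for which $R_P$ fails to be weakly normal and derives a contradiction from Proposition \ref{FInjectiveLocalizes} together with Lemma \ref{LemmaFInjectiveImpliesSeminormal}, which is the same minimal-counterexample argument you phrase as induction on dimension after localizing at a maximal ideal.
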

\begin{proof}
A ring is weakly normal if and only if all its localizations at prime ideals are weakly normal \cite[6.8]{ReidRobertsSinghWeak}.  If
$R$ is not weakly normal, choose a prime $P \in \Spec R$  of minimal height with respect to the condition that $R_P$ is not weakly
normal.  Apply Lemma \ref{LemmaFInjectiveImpliesSeminormal} to get a contradiction.
\end{proof}

$F$-injective singularities also glue together nicely.  Compare this result with \cite[3.8, 4.10]{DuBoisMain}.

\begin{proposition}
\label{GluingFInjectiveSingularities}
Let $X$ be a $F$-finite reduced scheme over a field of characteristic $p$.  Suppose $X$ is the union of two closed Cohen-Macaulay
subschemes, $Y_1$ and $Y_2$, of the same dimension.  If $Y_1 \cap Y_2$ is reduced, and $Y_1$, $Y_2$ and $Y_1 \cap Y_2$ all have
$F$-injective singularities, then $X$ has $F$-injective singularities.
\end{proposition}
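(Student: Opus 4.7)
The plan is to verify $F$-injectivity at each local ring $\O_{X, Q}$. If $Q \notin Y_1 \cap Y_2$, then $\O_{X,Q} = \O_{Y_j, Q}$ for some $j$ and the conclusion is immediate. So I assume $Q \in Y_1 \cap Y_2$ and take as my starting point the Mayer--Vietoris short exact sequence
\[
0 \to \O_X \to \O_{Y_1} \oplus \O_{Y_2} \to \O_{Y_1 \cap Y_2} \to 0,
\]
interpreting $Y_1 \cap Y_2$ as the scheme-theoretic intersection cut out by $\mathcal{I}_{Y_1} + \mathcal{I}_{Y_2}$. Since the maps are additive and $(a-b)^p = a^p - b^p$ in characteristic $p$, this sequence is Frobenius-equivariant, so taking local cohomology at $Q$ produces a Frobenius-equivariant long exact sequence.

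Next I exploit the Cohen--Macaulay hypothesis. Let $d = \dim \O_{Y_j, Q}$, common to both $j$ by the equal-dimension assumption, and equal to $\dim \O_{X,Q}$. Cohen--Macaulayness of $Y_j$ forces $H^i_Q(\O_{Y_j}) = 0$ for all $i \neq d$, while the fact that $\dim \O_{Y_1 \cap Y_2, Q} \leq d-1$ combined with Grothendieck vanishing yields $H^i_Q(\O_{Y_1 \cap Y_2}) = 0$ for $i \geq d$. Substituting these vanishings into the long exact sequence gives Frobenius-equivariant isomorphisms
\[
H^i_Q(\O_X) \cong H^{i-1}_Q(\O_{Y_1 \cap Y_2}) \quad \text{for } i < d,
\]
and a Frobenius-equivariant short exact sequence
\[
0 \to H^{d-1}_Q(\O_{Y_1 \cap Y_2}) \to H^d_Q(\O_X) \to H^d_Q(\O_{Y_1}) \oplus H^d_Q(\O_{Y_2}) \to 0,
\]
with $H^i_Q(\O_X) = 0$ for $i > d$ by Grothendieck vanishing.

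To conclude, I invoke $F$-injectivity of $Y_1$, $Y_2$, and $Y_1 \cap Y_2$. For $i < d$, the isomorphism directly identifies $H^i_Q(\O_X)$ with a module on which Frobenius is injective. For $i = d$, a short diagram chase suffices: if Frobenius annihilates $x \in H^d_Q(\O_X)$, then the image of $x$ in the quotient $H^d_Q(\O_{Y_1}) \oplus H^d_Q(\O_{Y_2})$ is also annihilated by Frobenius, hence must vanish since Frobenius is injective there, so $x$ lies in the submodule $H^{d-1}_Q(\O_{Y_1 \cap Y_2})$ where Frobenius is again injective, forcing $x = 0$.

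The main technical obstacle I anticipate is justifying the dimension bound $\dim \O_{Y_1 \cap Y_2, Q} \leq d - 1$; this is where one must either invoke the implicit convention that $Y_1$ and $Y_2$ share no irreducible component through $Q$, or handle that degenerate case separately. Once this is settled, the rest is a clean reduction through the Mayer--Vietoris long exact sequence, and in particular the Cohen--Macaulay hypotheses are precisely strong enough to prevent any awkward Frobenius action on a cokernel of local cohomology modules (the CM vanishing makes all the relevant connecting-map cokernels either trivial or submodules).
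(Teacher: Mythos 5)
Your proof is correct and follows essentially the same route as the paper's: the sequence $0 \to R \to R/I_1 \oplus R/I_2 \to R/(I_1+I_2) \to 0$, the Frobenius-equivariant long exact sequence in local cohomology, Cohen--Macaulay vanishing below degree $d$, and the same diagram chase in degree $d$. The dimension bound you flag as the main obstacle is immaterial: the chase only uses the left-exact portion $0 \to H^{d-1}_Q(\O_{Y_1\cap Y_2}) \to H^d_Q(\O_X) \to H^d_Q(\O_{Y_1}) \oplus H^d_Q(\O_{Y_2})$, which follows from $H^{d-1}_Q(\O_{Y_j})=0$ alone, and the surjectivity on the right is never needed (the paper likewise works only with this left-exact piece).
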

\begin{proof}
Gluing in this manner behaves well with respect to localization, so we may assume that $X$ is the spectrum of a local ring $(R,m)$ and
that $I_1, I_2 \subset R$ are the defining ideals of $Y_1$ and $Y_2$.  Note since $I_1 \cap I_2 = 0$, we have a short exact sequence
\begin{equation}
\label{fundamentalGluingSES}
0 \rightarrow R \rightarrow R/I_1 \oplus R/I_2 \rightarrow R/(I_1 + I_2) \rightarrow 0 .
\end{equation}
Now we apply $\myR \Gamma_m$ to obtain a long exact sequence
\[
\xymatrix{
\ldots \ar[r] & H^i_m(R) \ar[r] & H^i_m(R/I_1) \oplus H^i_m(R/I_2) \ar[r] & H^i_m(R/(I_1 + I_2)) \ar[r]^-{\delta_i} & \\
\ar[r] & H^{i+1}_m(R) \ar[r] & H^{i+1}_m(R/I_1) \oplus H^{i+1}_m(R/I_2) \ar[r] & \ldots .\\
}
\]
We wish to consider the action of Frobenius on each term of this long exact sequence.  Note that the action is compatible since the
action of Frobenius on each individual ring in the short exact sequence \ref{fundamentalGluingSES} is compatible with the maps of that
sequence.

Recall that $R/I_1$, $R/I_2$ and $R$ all have the same (pure) dimension $d$.  Then since $R/I_1$ and $R/I_2$ are Cohen-Macaulay we see
that the $\delta_i$ induce isomorphisms between $H^i_m(R)$ and $H^{i-1}_m(R/(I_1 + I_2))$ for $i < d$; thus we need only consider the
case of $i = d$.  We have the following exact sequence to consider
\[
\xymatrix{
0 \ar[r] & H^{d-1}_m(R/(I_1 + I_2)) \ar[r]^-{\delta_{d-1}} & H^d_m(R) \ar[r]^-{\phi_d} & H^d_m(R/I_1) \oplus H^d_m(R/I_1) .\\
}
\]
So suppose that $x \in H^d_m(R)$ and $x$ is annihilated by the action of Frobenius.  Then $\phi_d(x) = 0$ and so there exists a $y \in
H^{d-1}_m(R/(I_1 + I_2))$ such that $\delta_{d-1}(y) = x$.  But the action of Frobenius is injective on $H^{d-1}_m(R/(I_1 + I_2))$ and
$\delta_{d-1}$ injects, proving that $x = 0$ as desired.
\end{proof}

\section{Du Bois singularities}

Historically, Du Bois singularities were defined using Hodge-theoretic methods.  In particular, for each reduced separated scheme of
finite type over $\bC$ (or any field of characteristic zero) one can associate a filtered complex $(\FullDuBois{X}, F)$ (in the
filtered derived category of $X$ with differentials of orders $\leq 1$) corresponding to the De Rham complex for smooth varieties; see
\cite{DeligneHodgeIII}, \cite{DuBoisMain}, \cite{SteenbrinkgVanishing} and \cite{GNPP} for constructions and see \cite[Chapter
12]{KollarShafarevich} and \cite{KovacsDuBoisLC1} for enumerations of basic properties.  Each graded piece of this complex, $\Gr_F^i
\FullDuBois{X}$ is an object in $D^b_{\coherent}(X)$ and, in the case that $X$ is smooth, is quasi-isomorphic to $\Omega_X^i[-i]$.  In
particular the zeroth graded piece of this complex $\DuBois{X} \cong \Gr_F^0 \FullDuBois{X}$ is a complex related to $\Omega^0_X$,
which is the structure sheaf of $X$. Furthermore, there is a natural map $\O_X \rightarrow \DuBois{X}$ in $D^b_{\coherent}(X)$.

\begin{definition}
\label{DefinitionDuBoisSingularities}
$X$ is said to have \emph{Du Bois singularities} if the natural map $\O_X \rightarrow \DuBois{X}$ is a quasi-isomorphism.
\end{definition}

Du Bois singularities behave well with respect to general hyperplane sections, specifically,

\begin{proposition} \cite[V, 1]{GNPP}, \cite[12.6.2]{KollarShafarevich}
\label{DuBoisHyperplane}
Suppose that $X$ is a reduced separated scheme of finite type over $\bC$.  Suppose that $D$ is a sufficiently general member of a base point free linear system.
Then we have the following:
\[
\DuBois{D} \qis \O_D \tensor \DuBois{X} \text{, and } h^i(\DuBois{D}) \cong \O_D \tensor h^i(\DuBois{X}).
\]
\end{proposition}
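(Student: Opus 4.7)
The plan is to work through the hyperresolution description of the Du Bois complex. Recall from the Guillén-Navarro-Puerta-Pascual construction that one builds $\DuBois{X}$ as $\myR \epsilon_* \O_{X_\bullet}$, where $\epsilon : X_\bullet \to X$ is a cubic hyperresolution with each term $X_i$ smooth (over $\bC$). So my first step is to set up such a hyperresolution and then argue that it restricts well to a general member $D$ of the linear system.

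Next, because the linear system $|D|$ is base point free, its pullback to each smooth $X_i$ is again base point free. By classical Bertini, a sufficiently general $D \in |D|$ has the property that $D_i := D \times_X X_i$ is a smooth Cartier divisor in $X_i$ for every $i$ (there are only finitely many $X_i$ in the cubic hyperresolution, so ``sufficiently general'' is a finite intersection of non-empty open conditions on the linear system). A careful verification, level by level in the cubic diagram, shows that $D_\bullet := X_\bullet \times_X D$ forms a cubic hyperresolution of $D$, since transversality preserves the descent conditions that characterize a hyperresolution. Therefore $\DuBois{D} \qis \myR \epsilon_* \O_{D_\bullet}$, where I reuse $\epsilon$ for the restricted augmentation.

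Now I would exploit that $D$ is Cartier on $X$ and, by transversality, $D_\bullet$ is Cartier on $X_\bullet$, with $\O_{X_\bullet}(-D_\bullet) \cong \epsilon^* \O_X(-D)$. Using the Koszul resolution $\O_{D_\bullet} \qis [\O_{X_\bullet}(-D_\bullet) \to \O_{X_\bullet}]$ and the projection formula applied to $\epsilon$ (here we genuinely need that the twisting sheaf is pulled back from $X$), I get
\[
\myR \epsilon_* \O_{D_\bullet} \qis \bigl[ \DuBois{X} \tensor \O_X(-D) \to \DuBois{X} \bigr] \qis \O_D \tensor^L \DuBois{X}.
\]
Combined with the previous step, this gives $\DuBois{D} \qis \O_D \tensor^L \DuBois{X}$.

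The remaining point, and in some ways the real obstacle, is to promote the derived tensor product to an honest tensor product on each cohomology sheaf so as to obtain $h^i(\DuBois{D}) \cong \O_D \tensor h^i(\DuBois{X})$. For this I would argue that because $D$ is a general member of a base point free linear system, the local equation of $D$ is a nonzerodivisor on each coherent sheaf $h^i(\DuBois{X})$: indeed, $D$ avoids the finitely many associated points of these sheaves for general choice. This makes $\mathrm{Tor}_{\geq 1}^{\O_X}(\O_D, h^i(\DuBois{X})) = 0$, and a standard hypercohomology spectral sequence argument (or iterated truncation triangles together with the five-lemma) collapses $\O_D \tensor^L \DuBois{X}$ into the quasi-isomorphism class of $\O_D \tensor \DuBois{X}$ with the desired cohomology-by-cohomology identification. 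The main technical hurdle throughout is the Bertini step for the whole simplicial object; once that is in place, the projection formula and the Tor-vanishing are essentially formal.
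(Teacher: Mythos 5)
The paper offers no proof of this proposition at all --- it is quoted as a known result from \cite[V, 1]{GNPP} and \cite[12.6.2]{KollarShafarevich} --- and your argument is essentially the standard one from those sources: restrict a cubic hyperresolution to a general member of the base point free system, identify $\myR\epsilon_*\O_{D_\bullet}$ with the cone on $\DuBois{X}\tensor\O_X(-D)\to\DuBois{X}$ via the Koszul resolution and the projection formula, and kill the higher Tor's by choosing $D$ to miss the finitely many associated points of the $h^i(\DuBois{X})$. The outline is correct; the only place where you assert rather than argue is the claim that $D_\bullet=X_\bullet\times_X D$ is again a hyperresolution of $D$, which requires checking both smoothness of each $D_i$ (Bertini applied to the pulled-back, still globally generated, system on each $X_i$, discarding the components whose image lands outside $D$) and that the restricted augmentation remains of cohomological descent; that verification is the actual content of the cited references and should not be waved through as ``transversality preserves the descent conditions.''
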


There is another characterization of the complex $\DuBois{X}$ which we will find to be very useful.

\begin{theorem} \cite{SchwedeEasyCharacterization}, \cite{KarlThesis}
\label{AlternateDuBoisCharacterization}
Let $X$ be a reduced separated scheme of finite type over a field of characteristic zero embedded in a variety $Y$.  Call $\sI_X$ the
ideal sheaf of $X$ in $Y$.
Assume $Y$ is smooth and let $\pi : \tld Y \rightarrow Y$ be a strong log resolution of $X$; that is, $\tld Y$ is smooth, the pre-image
of $X$ has simple normal crossings and $\pi$ is an isomorphism outside of $X$.
Let $E$ be the reduced total transform (pre-image) of $X$; (note that this is a divisor).  Then $\DuBois{X} \qis \myR \pi_* \O_E$.
\end{theorem}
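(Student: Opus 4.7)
The plan is to extract the statement from the standard descent (Mayer--Vietoris) triangle for the Du Bois complex attached to the square with vertices $E$, $\tld Y$, $X$, $Y$ and the obvious arrows $E \hookrightarrow \tld Y$, $E \to X$, $X \hookrightarrow Y$, $\pi\colon \tld Y \to Y$. Because $\pi$ is proper birational, an isomorphism outside $X$, and $E = \pi^{-1}(X)_{\red}$, this is a ``resolution square'' in the sense of the cubical hyperresolutions of Guill\'en--Navarro Aznar--Pascual-Gainza--Puerta, and one obtains a distinguished triangle
\[
\DuBois{Y} \longrightarrow \DuBois{X} \oplus \myR\pi_* \DuBois{\tld Y} \longrightarrow \myR\pi_* \DuBois{E} \longrightarrow \DuBois{Y}[1]
\]
in $D^b_{\coherent}(Y)$; see \cite{GNPP} or \cite[Ch.~12]{KollarShafarevich}. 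This is where all of the hard machinery goes in.

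Next I would replace three of the four corners with structure sheaves. Because $Y$ and $\tld Y$ are smooth, $\DuBois{Y} \qis \O_Y$ and $\DuBois{\tld Y} \qis \O_{\tld Y}$. Because $E$ has simple normal crossings, $\DuBois{E} \qis \O_E$; this classical computation for SNC varieties (see \cite{DuBoisMain} or \cite[Ch.~12]{KollarShafarevich}) is the delicate reduction, and the only genuinely nontrivial input beyond the descent triangle itself. Substituting turns the triangle into
\[
\O_Y \longrightarrow \DuBois{X} \oplus \myR\pi_* \O_{\tld Y} \longrightarrow \myR\pi_* \O_E \longrightarrow \O_Y[1].
\]
Reading this triangle as a homotopy cocartesian square with corners $\O_Y$, $\DuBois{X}$, $\myR\pi_* \O_{\tld Y}$, $\myR\pi_* \O_E$, the cone of the natural map $\DuBois{X} \to \myR\pi_* \O_E$ is identified (up to quasi-isomorphism) with the cone of $\O_Y \to \myR\pi_* \O_{\tld Y}$.

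To close the argument, I would use that this latter cone vanishes: $Y$ is smooth, so for the resolution $\pi\colon \tld Y \to Y$ one has $\myR\pi_* \O_{\tld Y} \qis \O_Y$ (smooth varieties have rational singularities). Consequently $\DuBois{X} \qis \myR\pi_* \O_E$, which is the statement of the theorem. The principal obstacle is the pair of classical inputs---the GNPP descent triangle for $\DuBois{-}$ and the Du Bois property of SNC divisors; once those are in hand, the rest is a formal two-triangle manipulation.
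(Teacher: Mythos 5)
Your proposal is correct and is essentially the argument of the cited sources: the paper itself does not reprove this theorem but refers to \cite{SchwedeEasyCharacterization} and \cite{KarlThesis}, remarking that the proof is ``an application of the octahedral axiom'' with the main ideas from Du Bois and Kov\'acs, and your cone identification in the descent (Mayer--Vietoris) triangle for the resolution square is exactly that octahedral-axiom step. The three inputs you isolate --- the GNPP descent triangle, $\DuBois{E} \qis \O_E$ for simple normal crossings $E$, and $\myR \pi_* \O_{\tld Y} \qis \O_Y$ --- are precisely the ones used there, so there is nothing to add.
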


\begin{remark}
The proof of this theorem is essentially an application of the octahedral axiom.  The main ideas were present in
\cite[7.7]{DuBoisDuality} and \cite[3.2]{KovacsDuBoisLC1}.  The quasi-isomorphism $\DuBois{X} \qis \myR \pi_* \O_E$ also identifies the
natural map $\O_X \rightarrow \DuBois{X}$ with the natural map $\O_X \rightarrow \myR \pi_* \O_E$.
\end{remark}

\begin{corollary}
With the notation from Theorem \ref{AlternateDuBoisCharacterization}, $X$ has Du Bois singularities if and only if the natural map $\O_X
\rightarrow \myR \pi_* \O_E$ is a quasi-isomorphism.
\end{corollary}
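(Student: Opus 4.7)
The plan is to observe that this corollary is an immediate consequence of Theorem \ref{AlternateDuBoisCharacterization} together with the definition of Du Bois singularities. Specifically, by Definition \ref{DefinitionDuBoisSingularities}, $X$ has Du Bois singularities precisely when the natural map $\O_X \to \DuBois{X}$ is a quasi-isomorphism. By the theorem, we have a quasi-isomorphism $\DuBois{X} \qis \myR \pi_* \O_E$, and, as noted in the remark following Theorem \ref{AlternateDuBoisCharacterization}, this quasi-isomorphism identifies the canonical map $\O_X \to \DuBois{X}$ with the canonical map $\O_X \to \myR \pi_* \O_E$.

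The only step that requires any comment is the compatibility of the two natural maps with the quasi-isomorphism $\DuBois{X} \qis \myR \pi_* \O_E$, but this compatibility is part of the statement of Theorem \ref{AlternateDuBoisCharacterization} (as recorded in the remark). Given this compatibility, one of the maps is a quasi-isomorphism in $D^b_{\coherent}(X)$ if and only if the other is. This completes the argument.

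There is essentially no obstacle here; the corollary is a restatement of the theorem in the language of the definition. If one wished to prove it without citing the remark, one would only need to check that the natural map $\O_X \to \myR\pi_*\O_E$ factors through $\O_X \to \DuBois{X}$ via the isomorphism produced by the theorem, which follows from the construction of that isomorphism (essentially the octahedral axiom argument referenced in the remark). Thus the proof amounts to a single sentence invoking Theorem \ref{AlternateDuBoisCharacterization} and Definition \ref{DefinitionDuBoisSingularities}.
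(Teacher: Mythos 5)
Your argument is correct and is exactly the reasoning the paper intends: the corollary is an immediate consequence of Definition \ref{DefinitionDuBoisSingularities}, Theorem \ref{AlternateDuBoisCharacterization}, and the remark identifying the natural map $\O_X \rightarrow \DuBois{X}$ with $\O_X \rightarrow \myR \pi_* \O_E$. The paper offers no separate proof precisely because this is the whole content, and you correctly flag the identification of the two natural maps as the only point requiring justification.
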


We relate Du Bois singularities to seminormality.  Du Bois knew that Du Bois singularities were seminormal, although he did not use the
language of seminormality; see \cite[4.8, 4.9]{DuBoisMain}.  However, even more is true.  In \cite[5.2]{SaitoMixedHodge}, it was shown that $h^0(\DuBois{X})$ is the structure sheaf of the seminormalization of $X$.  For completeness, an alternate proof which uses Theorem \ref{AlternateDuBoisCharacterization} is provided below.

\begin{lemma}\cite[5.2]{SaitoMixedHodge}
\label{SeminormalDuBoisComplex}
Suppose that $X$ is a reduced separated scheme of finite type over an algebraically closed field of characteristic zero.  Then
$h^0(\DuBois{X}) = \O_{X^{\sn}}$ where $\O_{X^{\sn}}$ is the structure sheaf of the seminormalization of $X$.
\end{lemma}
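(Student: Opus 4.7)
The plan is to apply Theorem \ref{AlternateDuBoisCharacterization} to identify $h^0(\DuBois{X})$ with $\pi_*\O_E$, where $\pi : \tld Y \to Y$ is a strong log resolution of an embedding $X \hookrightarrow Y$ into a smooth variety and $E = \pi^{-1}(X)_{\red}$. Setting $X' := \Spec_X \pi_*\O_E$, I will show $X' \cong X^{\sn}$ over $X$ by verifying two facts: $X'$ is seminormal, and the canonical map $X' \to X$ corresponds to a subintegral extension of reduced rings. The conclusion will then follow from the maximality of the seminormalization inside the normalization.

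First, I claim $X'$ is seminormal. Since $E$ is a reduced simple normal crossings divisor in a smooth variety, locally it is isomorphic to a union of coordinate hyperplanes $V(x_1 \cdots x_r) \subset \bA^n$, which is seminormal; this can be checked directly via Proposition \ref{SemiNormalEquivalentConditions}, since in the normalization $\prod_i k[x_1, \ldots, \widehat{x_i}, \ldots, x_n]$ any element $b$ with $b^2, b^3 \in A$ must agree on all pairwise coordinate intersections and hence already lies in $A$. Thus $E$ is seminormal, and by Corollary \ref{GlobalSectionsOfSeminormality} the ring $\Gamma(V, \pi_*\O_E) = \Gamma(\pi^{-1}V, \O_E)$ is seminormal for every open $V \subset X$, making $X'$ seminormal.

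Next, I verify that $X' \to X$ is subintegral. The morphism is finite, arising as the Stein factorization of the proper map $\pi|_E : E \to X$. To see it is bijective on underlying points, I apply Zariski's connectedness theorem: since $\pi$ is a proper birational morphism to the smooth (hence normal) variety $Y$, each fiber $\pi^{-1}(p)$ is connected. For $p \in X$ the set-theoretic fiber of $\pi$ already lies in $E$, so the fiber of $\pi|_E$ over $p$ is connected, and Stein factorization forces $X' \to X$ to be a bijection on points. Because we work over an algebraically closed field of characteristic zero, the residue field condition of Definition \ref{Subintegral} is automatic (by the remark following that definition), so $\O_X \subset \pi_*\O_E$ is subintegral; in particular it is birational, placing $\pi_*\O_E$ inside the normalization $\bar \O_X$.

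Combining these two facts, the chain $\O_X \subset \pi_*\O_E \subset \bar \O_X$ together with the maximality of $\O_{X^{\sn}}$ among subintegral extensions of $\O_X$ in $\bar \O_X$ gives $\pi_*\O_E \subset \O_{X^{\sn}}$. The intermediate inclusion $\pi_*\O_E \subset \O_{X^{\sn}}$ is itself subintegral (a quick check from the definitions shows that subintegrality passes to intermediate subextensions), and since $\pi_*\O_E$ is already seminormal in $\bar \O_X$, this forces $\pi_*\O_E = \O_{X^{\sn}}$. I expect the main technical hurdle to be the seminormality of $E$ and its transfer to $\pi_*\O_E$ via the global-sections result; the remaining assembly is a routine application of Zariski's connectedness theorem and the formal properties of the seminormalization.
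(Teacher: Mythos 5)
Your proposal is correct and follows essentially the same route as the paper: identify $h^0(\DuBois{X})$ with $\pi_*\O_E$ via Theorem \ref{AlternateDuBoisCharacterization}, deduce seminormality of $\Spec \pi_*\O_E$ from Corollary \ref{GlobalSectionsOfSeminormality}, show $X' \to X$ is a finite bijection (hence subintegral over an algebraically closed field of characteristic zero) using connectedness of the fibers of $\pi$, and conclude by maximality of the seminormalization. You are in fact somewhat more careful than the paper in checking the seminormality of the simple normal crossings divisor $E$ and the birationality needed to place $\pi_*\O_E$ inside the normalization, but these are details the paper leaves implicit rather than genuine differences of method.
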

\begin{proof}
Without loss of generality we may assume that $X$ is affine.
We need only consider $\pi_* \O_E$ by Theorem \ref{AlternateDuBoisCharacterization}.  By Corollary \ref{GlobalSectionsOfSeminormality} $\pi_* \O_E$ is a sheaf of seminormal rings.  Now let $X' = \Spec (\pi_* \O_E)$ and consider the factorization
\[
E \rightarrow X' \rightarrow X.
\]
Note $E \rightarrow X'$ must be surjective since it is dominant by construction and is proper by \cite[II.4.8(e)]{Hartshorne}.  Since
the composition has connected fibers, so must $\rho : X' \rightarrow X$.  On the other hand, $\rho$ is a finite map since $\pi$ was
proper.  Therefore $\rho$ is a bijection on points.  Because these maps and schemes are of finite type over an algebraically closed
field of characteristic zero, we see that $\Gamma(X, \O_X) \rightarrow \Gamma(X', \O_{X'})$ is a subintegral extension of rings.  Since
$X'$ is seminormal, so is $\Gamma(X', \O_{X'})$, which completes the proof.
\end{proof}

\begin{corollary}
Suppose that $X$ is a reduced separated scheme of finite type over a field $k$ of characteristic zero.  Then $h^0(\DuBois{X}) = \O_{X}$
if and only if $X$ is seminormal.
\end{corollary}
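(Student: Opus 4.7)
The plan is to combine Theorem~\ref{AlternateDuBoisCharacterization} and Corollary~\ref{GlobalSectionsOfSeminormality} for one direction, and to deduce the other by faithfully flat base change from $k$ to an algebraic closure $\bar k$, reducing to Lemma~\ref{SeminormalDuBoisComplex}.

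For the ``only if'' direction, I would argue as follows. Suppose $\O_X \cong h^0(\DuBois{X})$. By Theorem~\ref{AlternateDuBoisCharacterization}, $h^0(\DuBois{X}) \cong \pi_* \O_E$ with $E \subset \tld Y$ a reduced simple normal crossings divisor on a smooth scheme, which is in particular seminormal. Corollary~\ref{GlobalSectionsOfSeminormality} then gives that for every affine open $U \subset X$, the ring $\O_X(U) \cong (\pi_* \O_E)(U) = \Gamma(\pi^{-1}(U) \cap E, \O_E)$ is seminormal, and since seminormality localizes, $X$ is seminormal.

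For the ``if'' direction, I would use base change. Assume $X$ is seminormal, and set $\bar X := X \times_k \bar k$. In characteristic zero, $k \to \bar k$ is (ind-)smooth and faithfully flat, so $\bar X$ is reduced and seminormal, and the Du Bois complex commutes with this base change: from Theorem~\ref{AlternateDuBoisCharacterization}, a strong log resolution of $X$ base-changes to a strong log resolution of $\bar X$, and flat base change gives $\myR\pi_* \O_E \otimes_k \bar k \qis \myR\bar\pi_* \O_{\bar E}$. Thus the canonical map $\phi : \O_X \to h^0(\DuBois{X})$ becomes, after tensoring with $\bar k$, the canonical map $\O_{\bar X} \to h^0(\DuBois{\bar X})$, which is an isomorphism by Lemma~\ref{SeminormalDuBoisComplex} applied to the seminormal $\bar X$ over the algebraically closed $\bar k$. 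Faithful flatness of $k \to \bar k$ then forces $\phi$ itself to be an isomorphism. The main (mildly technical) step will be verifying the two base-change compatibilities -- preservation of seminormality under $(-)\times_k \bar k$ and commutation of $\DuBois{-}$ with this base change -- both of which are standard consequences of the (ind-)smoothness of $k \to \bar k$ in characteristic zero, but are the only inputs beyond purely formal manipulation of the tools already set up in the paper.
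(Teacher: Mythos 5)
Your proposal is correct and follows essentially the same route as the paper: the paper's proof likewise reduces to Lemma~\ref{SeminormalDuBoisComplex} by observing that both seminormality and the condition $\O_X \cong \pi_* \O_E$ are invariant under the separable, faithfully flat base change $k \to \bar k$ (citing \cite[5.7]{GrecoTraversoSeminormal} for the former). Your direct argument for the ``only if'' direction, via seminormality of the reduced snc divisor $E$ and Corollary~\ref{GlobalSectionsOfSeminormality}, is a minor variation that simply reuses the opening step of the proof of Lemma~\ref{SeminormalDuBoisComplex} and thereby avoids base change for that half.
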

\begin{proof}
The only thing to do here is to check statements of base change.  The condition that $X$ is seminormal is invariant under separable
(faithfully flat) base change of $k$ by \cite[5.7]{GrecoTraversoSeminormal}.  Likewise the condition that
\[
\O_{X} \qis \myR \pi_* \O_E
\]
is also invariant under base change of $k$.  This completes the proof.
\end{proof}

\begin{remark}
Compare the previous seminormality results with \cite{AmbroSeminormalLocus}.
\end{remark}

\begin{remark}
Proposition \ref{DuBoisHyperplane}, Lemma \ref{SeminormalDuBoisComplex} and standard results about hyperplane sections and log resolutions
also allow one to easily re-obtain certain results on general hyperplanes sections of seminormal varieties; see
\cite{VitulliHyperplaneSections} and \cite{CuminoGrecoManaresiBertiniAndWeakNormality}.  Related statements about general hyperplane sections of the seminormalization of a variety also
easily follow.
\end{remark}

We need to prove a certain injectivity into local cohomology in order to prove that singularities of dense $F$-injective type are Du
Bois.  We generalize the following lemma of Kov\'acs to the case of a non-closed point.  Kov\'acs' lemma often plays the same role for
Du Bois singularities that Grauert-Riemenschneider vanishing, \cite{GRVanishing}, plays for rational singularities.

\begin{lemma}  \cite[1.4]{KovacsDuBoisLC2}
\label{KovacsKeyLemma}
Let $X$ be a complex scheme with a finite set of closed points, $P$, such that $X \setminus P$
has only Du Bois singularities. Then
\[
H^i_P(X, \O_X) \rightarrow \bH^i_P(X, \DuBois{X})
\]
is surjective for all $i$.
\end{lemma}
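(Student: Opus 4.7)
The plan is to apply $\myR \Gamma_P$ to the distinguished triangle
\[
\O_X \longrightarrow \DuBois{X} \longrightarrow C^{\mydot} \xrightarrow{+1}
\]
determined by the natural map, where $C^{\mydot}$ denotes its mapping cone. Because $X \setminus P$ has Du Bois singularities, $\O_X \to \DuBois{X}$ is a quasi-isomorphism on the open complement of $P$, so $C^{\mydot} \in D^b_{\coherent}(X)$ has support contained in the zero-dimensional closed set $P$. In particular, the natural morphism $\myR \Gamma_P C^{\mydot} \to C^{\mydot}$ is a quasi-isomorphism, since $j^* C^{\mydot} \qis 0$ for the open immersion $j : X \setminus P \hookrightarrow X$.

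Applying $\myR \Gamma_P$ to the triangle then yields the long exact sequence
\[
\cdots \to H^i_P(X, \O_X) \xrightarrow{\alpha^i} \bH^i_P(X, \DuBois{X}) \to \bH^i(X, C^{\mydot}) \xrightarrow{\delta^i} H^{i+1}_P(X, \O_X) \to \cdots,
\]
so the desired surjectivity of each $\alpha^i$ is equivalent to injectivity of every connecting morphism $\delta^i$. Because $P$ is a disjoint union of its closed points, the functor $\myR \Gamma_P$ splits as a direct sum of the $\myR \Gamma_Q$ for $Q \in P$, and it suffices to prove injectivity of $\delta^i$ after localizing at each $Q$. By Matlis duality (via Theorem \ref{LocalDualityForComplexes}), this is equivalent to the surjectivity, in a neighborhood of $Q$, of the Grothendieck-dualized map between the cohomology sheaves of $\omega_X^{\mydot}$ and of $\myR \sHom^{\mydot}_{\O_X}(C^{\mydot}, \omega_X^{\mydot})$ extracted from the $\myR \sHom(-, \omega_X^{\mydot})$-dual triangle.

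To produce this surjectivity, I would invoke the alternate characterization $\DuBois{X} \qis \myR \pi_* \O_E$ of Theorem \ref{AlternateDuBoisCharacterization} and apply Grothendieck duality (Theorem \ref{GrothendieckDuality}) along the log resolution $\pi$. Together these identify $\myR \sHom^{\mydot}_{\O_X}(\DuBois{X}, \omega_X^{\mydot})$ with $\myR \pi_* \omega_E^{\mydot}$, so the dualized map becomes the natural trace/residue map $\myR \pi_* \omega_E^{\mydot} \to \omega_X^{\mydot}$ (up to the shift normalizing $\omega_X^{\mydot}$), whose cokernel must vanish in a neighborhood of $P$. The main obstacle is exactly this last step: it is a Grauert--Riemenschneider/Koll\'ar-style surjectivity for the reduced simple normal crossings divisor $E$, and is where the Hodge-theoretic content of $\DuBois{X}$ is essential; in contrast to the rational singularities situation, one cannot appeal directly to vanishing of $\myR^i \pi_* \omega_{\tld Y}$ on a smooth resolution but must work with the dualizing complex of the SNC boundary $E$ itself.
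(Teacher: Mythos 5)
There is a genuine gap. Your reduction is essentially only a reformulation: the distinguished triangle $\O_X \to \DuBois{X} \to C^{\mydot} \xrightarrow{+1}$, the observation that $C^{\mydot}$ is supported on $P$, and the passage through local and Grothendieck duality convert the asserted surjectivity into an equivalent dual statement, but they do not prove anything. (This is exactly what the paper does as ``Step 1'' of Proposition \ref{GeneralizedKovacsSurjectivity}; the present lemma itself is not proved in the paper but quoted from \cite[1.4]{KovacsDuBoisLC2}.) You then explicitly concede that the resulting statement --- the ``main obstacle'' --- is left unproven. That statement is the entire content of the lemma: it is precisely where the Hodge theory enters, and the known argument is not a formal consequence of the setup. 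In Kov\'acs's proof one compactifies $X$, uses the Du Bois/Deligne theorem that for a proper complex scheme the composition $H^i(X^{\an},\bC) \to H^i(X,\O_X) \to \bH^i(X,\DuBois{X})$ is surjective (a consequence of the $E_1$-degeneration and strictness of the Hodge filtration on the filtered de Rham complex), and then runs a diagram chase comparing the local cohomology triangles $\myR\Gamma_P \to \myR\Gamma \to \myR\Gamma(X\setminus P)$ for $\O_X$ and for $\DuBois{X}$, using that the two agree on $X \setminus P$. None of this appears in your proposal, so the proof is incomplete at its essential point.

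There is also a directional error in your duality step. Matlis/local duality converts the surjectivity of $H^i_P(X,\O_X) \to \bH^i_P(X,\DuBois{X})$ into the \emph{injectivity} of $h^{j}(\myR\pi_*\omega_E^{\mydot}) \to h^{j}(\omega_X^{\mydot})$ at $P$ (vanishing of the kernel on cohomology sheaves), exactly as stated in Proposition \ref{GeneralizedKovacsSurjectivity} --- not, as you write, the vanishing of the cokernel of the trace map $\myR\pi_*\omega_E^{\mydot} \to \omega_X^{\mydot}$. Surjectivity of that trace map on cohomology is a strictly different (and in general false) assertion; requiring both kernel and cokernel to vanish would amount to $X$ being Du Bois at $P$, which is not the hypothesis. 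So even the target you set up for the final, missing step is not the correct one.
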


Below is the slightly generalized version of this lemma.  The idea of the proof is to use local duality and apply general hyperplane
sections to reduce to Lemma \ref{KovacsKeyLemma}.

\begin{proposition}
\numberwithin{equation}{theorem}
\label{GeneralizedKovacsSurjectivity}
Suppose that $X = \Spec R$ is a reduced closed subscheme of $Y = \bA_{\bC}^n = \Spec \bC[x_1, \ldots, x_n]$.  Further suppose that $P
\in X = \Spec R$ is a prime such that $(\Spec R_P) \setminus P$ has Du Bois singularities (for example, this occurs if $P$ is a minimal prime
of the non-Du Bois locus).  Then
\[\xymatrix{H^i_P(R_P) \ar@{->>}[r] & \bH^i_P(X, \DuBois{X_P})}\]
is surjective for every $i$, or dually,
\[
\xymatrix{
h^{j} ((\myR \pi_* \omega_{E}^{\mydot})_P) \ar@{^{(}->}[r] & h^{j}(\omega_{X}^{\mydot})_P
}
\]
is injective for every $j$. Here $E$ is the pre-image of $X$ under a strong log resolution $\pi : \tld Y \rightarrow Y$ of $X$ as above and
$\blank_P$ is the localization at $P$.
\end{proposition}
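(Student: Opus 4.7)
The plan is to convert the surjectivity statement into the equivalent injectivity statement via duality, then reduce the question at the non-closed point $P$ to a finite collection of closed points after cutting by general hyperplanes, and finally invoke Lemma~\ref{KovacsKeyLemma}.

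First I set up duality. By Grothendieck duality (Theorem~\ref{GrothendieckDuality}) applied to $\pi$, together with the identification $\DuBois{X} \qis \myR \pi_* \O_E$ from Theorem~\ref{AlternateDuBoisCharacterization}, the $\omega_X^{\mydot}$-dual of $\DuBois{X}$ is $\myR \pi_* \omega_E^{\mydot}$, and the natural map $\O_X \to \DuBois{X}$ dualizes to $\myR \pi_* \omega_E^{\mydot} \to \omega_X^{\mydot}$. Applying local duality (Theorem~\ref{LocalDualityForComplexes}) at $R_P$ and Matlis duality then shows that the surjectivity in the statement is the Matlis dual of the injectivity, once the normalizing shift of the dualizing complex is taken into account. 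I will prove the injective form.

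Next I reduce to checking injectivity at a closed point. Both $h^j(\myR \pi_* \omega_E^{\mydot})$ and $h^j(\omega_X^{\mydot})$ are coherent $\O_X$-modules, so the kernel $\sK$ of the map between them is coherent with closed support. If $\sK_P \neq 0$, then $V(P) = \overline{\{P\}} \subset \Supp \sK$, so in particular $\sK_Q \neq 0$ for every closed point $Q \in V(P)$. Contrapositively, injectivity at some closed point $Q \in V(P)$ is enough to conclude injectivity at $P$. After shrinking $X$ to a small affine open neighborhood of $P$ (using that $P$ is assumed to be a minimal prime of the non-Du Bois locus), we may further assume that the non-Du Bois locus of $X$ equals $V(P)$.

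Now I cut down by hyperplanes to reach the situation of Lemma~\ref{KovacsKeyLemma}. Let $d = \dim R/P$ and choose sufficiently general linear forms $\ell_1, \ldots, \ell_d \in \bC[x_1, \ldots, x_n]$ so that: (i) their restrictions to $V(P)$ form a system of parameters, cutting $V(P)$ in a finite reduced set $\{Q_1, \ldots, Q_s\}$ of closed points; (ii) iterated application of Proposition~\ref{DuBoisHyperplane} yields $\DuBois{X'} \qis \O_{X'} \otimes^L \DuBois{X}$ for $X' = X \cap V(\ell_1, \ldots, \ell_d)$, so that the non-Du Bois locus of $X'$ is contained in the finite set $\{Q_1, \ldots, Q_s\}$; and (iii) at a chosen $Q = Q_i$ the sequence $\ell_1, \ldots, \ell_d$ is a regular sequence on the stalks of $h^k(\myR \pi_* \omega_E^{\mydot})$ and $h^k(\omega_X^{\mydot})$ for all relevant $k$ (achieved by avoiding the finitely many associated primes contained in $Q$). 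Apply Lemma~\ref{KovacsKeyLemma} to $X'$ with support $\{Q_1, \ldots, Q_s\}$; since local cohomology with respect to a finite set of closed points decomposes as a direct sum, we obtain surjectivity at $Q$, which dualizes (via Step 1 on $X'$) to injectivity $h^j(\myR \pi'_* \omega_{E'}^{\mydot})_Q \hookrightarrow h^j(\omega_{X'}^{\mydot})_Q$.

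It remains to descend from $X'$ to $X$. Dualizing the iterated Proposition~\ref{DuBoisHyperplane} through Grothendieck duality (and using self-duality of the Koszul complex of a regular sequence) gives $\myR \pi'_* \omega_{E'}^{\mydot} \qis \myR \pi_* \omega_E^{\mydot} \otimes^L_{\O_X} \O_{X'}$ and $\omega_{X'}^{\mydot} \qis \omega_X^{\mydot} \otimes^L_{\O_X} \O_{X'}$ (up to a common shift). Condition (iii) makes the relevant Koszul spectral sequences degenerate, identifying the stalk of each side at $Q$ with the quotient of the corresponding unreduced stalk by $(\ell_1, \ldots, \ell_d)$. The injectivity at $Q$ in $X'$ thus says that $h^j(\myR \pi_* \omega_E^{\mydot})_Q/(\ell_1, \ldots, \ell_d) \hookrightarrow h^j(\omega_X^{\mydot})_Q/(\ell_1, \ldots, \ell_d)$. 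A standard Krull intersection / iterated Nakayama argument --- if $\phi \colon M \to N$ is a map of finitely generated modules over a local ring and $\ell$ is a nonzerodivisor on both $M$ and $N$, then injectivity of $\phi$ modulo $\ell$ forces $\ker \phi \subset \bigcap_n \ell^n M = 0$ --- applied along the regular sequence $\ell_1, \ldots, \ell_d$ promotes this to genuine injectivity $h^j(\myR \pi_* \omega_E^{\mydot})_Q \hookrightarrow h^j(\omega_X^{\mydot})_Q$, and Step 2 transfers the conclusion from $Q$ to $P$. The main technical obstacle is orchestrating the general-position conditions on the $\ell_i$: each of Proposition~\ref{DuBoisHyperplane}'s Bertini-type hypothesis, transversality to $V(P)$, reducedness of the intermediate sections, and the regular-sequence condition on several coherent sheaves is a Zariski-open condition, and the bulk of the careful work lies in verifying that they can be met simultaneously.
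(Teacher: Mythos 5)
Your proposal is correct and follows essentially the same route as the paper: Matlis/local/Grothendieck duality to convert the statement into injectivity on cohomology of dualizing complexes, general hyperplane sections compatible with Proposition~\ref{DuBoisHyperplane} and regular on the relevant coherent modules, and Lemma~\ref{KovacsKeyLemma} at closed points. The only difference is organizational --- the paper inducts on $\dim R/P$ one hyperplane at a time and pushes a hypothetical nonzero kernel \emph{down} to a contradiction, whereas you cut by all $d$ hyperplanes at once and \emph{lift} the injectivity back up via the Krull-intersection argument --- which is just the contrapositive arrangement of the same argument.
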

\begin{proof}
As in Theorem \ref{AlternateDuBoisCharacterization}, we have $X$ in $Y$, where $Y$ is a smooth variety.  Perform a strong log resolution of
$X$ in $Y$ and let $E$ be the reduced pre-image.  Let $k = R_P/P R_P$ be the residue field of $R_P$ and let $I$ be an injective hull of
$k$.

Because the proof is rather long, we break it up into several steps.

\vskip 12pt
\hskip -12pt {\bf Step 1.}  \emph{We use local duality to reduce our question to a statement about dualizing complexes}.

We have a natural map $\O_X \rightarrow \myR \pi_* \O_E$ which we localize at $P$ (that is, take the product with $\Spec \O_{X_P}$ over
$X$) to obtain
\begin{equation}
\label{FirstMap}
\O_{X_P} \rightarrow \myR \pi_* \O_{E_P}.
\end{equation}
Now apply $\myR \Hom^{\mydot}_{\O_{X_P}}(\blank, \omega_{X_P}^{\mydot})$.  By Theorem \ref{GrothendieckDuality} (Grothendieck duality)  we see
that \ref{FirstMap} is dual to $\myR \pi_* \omega_{E_P}^{\mydot} \rightarrow \omega_{X_P}^{\mydot}$.  Apply $\myR
\Hom_{R_P}^{\mydot}(\blank, I)$; local duality then identifies this map with
\[
\myR \Gamma_P(X, \O_{X_P}) \rightarrow \myR \Gamma_P(X, \myR \pi_* \O_{E_P}).
\]
Note that since $\Hom_{R_P}(\blank, I)$ is exact, we abuse notation and associate it with $\myR \Hom_{R_P}^{\mydot}(\blank, I)$, its right derived functor.
If we apply $\myR \Hom^{\mydot}_{R_P}(\blank, I)$ to these complexes, as in the proof of Proposition \ref{FInjectiveLocalizes}, Matlis duality guarantees that each
\[
h^i(\myR \Gamma_P(X, \O_{X_P})) \rightarrow h^i(\myR \Gamma_P(X, \myR \pi_* \O_{E_P}))
\]
map is surjective if and only if each
\begin{equation}
\label{MapWeWantToInject}
\xymatrix{
\Hom_{R_P}(\Hom_{R_P}(h^{-i}(\myR \pi_* \omega_{E_P}^{\mydot}), I), I) \ar[r] & \Hom_{R_P}(\Hom_{R_P}(h^{-i}(\omega_{X_P}^{\mydot}),
I), I)  \\
}
\end{equation} is injective (since $\Hom_{R_P}(\blank, I)$ is exact).
By the same argument as the proof of Proposition \ref{FInjectiveLocalizes} we see that
\[
h^{-i}(\myR \pi_* \omega_{E_P}^{\mydot}) \rightarrow h^{-i}(\omega_{X_P}^{\mydot})
\]
injects if and only if the map above, \ref{MapWeWantToInject}, injects, and so we are reduced to proving this new injectivity.  In
other words, our desired result is equivalent to
\begin{equation}
\label{DualMapEquation}
h^{-i}(\myR \pi_* \omega_{E}^{\mydot}) \rightarrow h^{-i}(\omega_{X}^{\mydot})
\end{equation}
injecting at the stalk associated to $P$ (since $P$ is not maximal, there is a shift of complexes here that I am suppressing).  This conclude step 1.

\vskip 12pt \hskip -12pt{\bf Step 2.}  \emph{We explain the inductive setup and the choice of a general hyperplane}.

So suppose the map \ref{DualMapEquation} does not inject at the stalk associated to $P$.  We apply induction on the dimension of $\Spec (R/P)$ (the case
where $P$ has maximal height is simply Lemma \ref{KovacsKeyLemma}).

Choose a general hyperplane $H' \subset Y$.  Let $f'$ be the generator of the ideal of $H'$.  We will use the fact that $H'$ is general
in the following way.  The element $f'$ has nonzero, nonunit image $f \in R$, and geometrically speaking, will intersect $X = \Spec R$ in a lower
dimensional subscheme.  Furthermore we may view $f'$ as a global section of $E$ by the map $\O_X \rightarrow \pi_* \O_E$, and so we may
require that $f$ does not annihilate any elements of $\O_E$ on any open set of a fixed affine cover (since it also corresponds to a general member of a base point free linear system on $\tld Y$).  Note that $H' = \Spec \bC[x_1,
\ldots, x_n]/f'$ is smooth and $\pi$ induces a log resolution $\tld H' \rightarrow H'$ of $H = \Spec R/fR$ with reduced exceptional
divisor $E_H$, where $E_H$ corresponds to $\O_E / f \O_E$.  Similarly, we choose $f$ sufficiently generally so that it satisfies the
conclusion of Proposition \ref{DuBoisHyperplane}.  Finally we require that $f$ does not annihilate any element of $h^{-i}(\myR \pi_*
\omega_{E}^{\mydot})$, any element of $h^{-i}(\omega_X^{\mydot})$ or any elements in the kernel or cokernel of the natural maps between
these various modules.  This completes step 2.

\vskip 12pt \hskip-12pt{\bf Step 3.}  \emph{We use homological algebra and our general hyperplane to reduce the dimension of $R/P$ in order to apply our inductive hypothesis}.

Apply $\myR \Hom^{\mydot}_R(R/f, \blank) = \myR \Hom^{\mydot}_{\O_X}(\O_H, \blank)$ to $\myR \pi_* \omega_E^{\mydot} \rightarrow
\omega_{X}^{\mydot}$.  Because
\[
\xymatrix{R \ar[r]^{\times f} & R \ar[r] & R/f}
\]
is a (free) resolution of $R/f$, we see that $\myR \Hom^{\mydot}_R(R/f, M) \qis (M/fM)[-1] \qis M \tensor_R (R/f)[-1]$ for
any $R$-module $M$ with no elements annihilated by $f$.  Since we chose $f$ to be general, the hyper-cohomology spectral sequences computing $\myR
\Hom^{\mydot}_{\O_X}(\O_H, \blank)$ applied to the complexes $\myR \pi_* \omega_E^{\mydot}$ and $\omega_{X}^{\mydot}$ collapse (that is, have only one non-zero column).  Thus we
consider the maps
\begin{equation}
\label{firstpointEquation}
h^{-i}(\myR \pi_* \omega_E^{\mydot}) \tensor \O_H \rightarrow h^{-i}(\omega_{X}^{\mydot}) \tensor \O_H,
\end{equation}
coming from the map of the two spectral sequences.
Now note that $\myR \Hom^{\mydot}_{\O_X}(\O_H, \omega_{X}^{\mydot})$ is just $\omega_{H}^{\mydot}$ by the definition of $f^!$ for
finite maps.  Consider $\myR \Hom^{\mydot}_{\O_X}(\O_H, (\myR \pi_* \omega_E^{\mydot}) )$.  This can be viewed as
\[
\myR \Hom^{\mydot}_{\O_X}(\O_H, \myR \Hom^{\mydot}_{\O_X}(\myR \pi_* \O_E, \omega_X^{\mydot}))
\]
which is naturally isomorphic to
\begin{equation}
\label{midpointEquation}
\myR \Hom^{\mydot}_{\O_X}( (\O_H \hypertensor (\myR \pi_* \O_E)), \omega_X^{\mydot})
\end{equation}
by \cite[II.5.15]{HartshorneResidues} (here $\hypertensor$ denotes the left derived functor of $\tensor$ and the tensor product is over $\O_X$).
However,
\[
\O_H \hypertensor \myR \pi_* \O_E \qis \myR \pi_* \O_{E_H},
\]
also see Proposition \ref{DuBoisHyperplane}.  That quasi-isomorphism is also functorial in the sense that we have the following
commutative diagram.
\[
\xymatrix{
\O_H \hypertensor \O_X \ar[r]^-{\cong} \ar[d] & \O_H \ar[d]\\
\O_H \hypertensor \myR \pi_* \O_E \ar[r]^-{\cong} & \myR \pi_* \O_{E_H} \\
}
\]
The commutativity is particularly easy to see if one employs a \Cech resolution (made up of $\blank \tensor \O_H$-acyclic objects) to
compute $\myR \pi_* \O_E$.
By Grothendieck duality, and the description of $f^!$ for finite maps, we see that \ref{midpointEquation} can be re-written as
\[
\myR \pi_* \sHom^{\mydot}_{\O_E}(\O_{E_H}, \omega_E^{\mydot}) \qis \myR \pi_* \omega^{\mydot}_{E_H}.
\]
So we see that \ref{firstpointEquation} is compatibly identified with the natural map
\[
h^{-i+1}(\myR \pi_* \omega_{E_H}^{\mydot}) \rightarrow h^{-i+1}(\omega_{H}^{\mydot}).
\]
Therefore if the map we were originally interested in,
\[
h^{-i}(\myR \pi_* \omega_{E}^{\mydot}) \rightarrow h^{-i}(\omega_{X}^{\mydot}),
\]
had kernel with $P$ as a minimal associated prime, then the new associated map
\[
\left(h^{-i}(\myR \pi_* \omega_E^{\mydot})) \tensor \O_H \cong h^{-i+1}(\myR \pi_* \omega_{E_H}^{\mydot})\right) \rightarrow
\left(h^{-i}(\omega_{X}^{\mydot}) \tensor \O_H \cong h^{-i+1}(\omega_{H}^{\mydot}) \right)
\]
has kernel with the components of $P \cap H$ among the minimal associated primes (since $H$ was chosen to be general).  Note also that $H$ localized at each of the components of $P \cap H$ has Du Bois singularities outside of $P \cap H$ (that is, on the punctured spectrum).  But now our inductive hypothesis guarantees that this is impossible, proving the
proposition.
\end{proof}

\begin{remark}
This also allows us to make the following reduction.  Suppose that $P$ is a minimal prime of the non-Du Bois locus of a variety $X =
\Spec R$.  Then, by inverting a single element $f$ of $R \setminus P$ and replacing $R$ by $R[f^{-1}]$, we may assume that $P$ is the unique
minimal prime of the non-Du Bois locus and, furthermore, that $h^{-i}(\myR \pi_* \omega_{E}^{\mydot}) \rightarrow
h^{-i}(\omega_{X}^{\mydot})$ injects for every $i$.  This is done without completely reducing to the case of a local ring.  Therefore, this can be
thought of as additional progress towards something like a Grauert-Riemenschneider vanishing for Du Bois singularities.  It would be
interesting to know if the above injectivity holds \emph{without} the need to invert any elements; also see
Question \ref{GRVanishingForDuBois}.
\end{remark}

\section{The main theorem and reduction to characteristic $p$}
\label{ReductionToCharacteristicP}

\begin{theorem}
\label{TheoremFInjectiveImpliesDuBois}
Suppose that $X$ is a reduced scheme of finite type over a field of characteristic zero.  If $X$ dense $F$-injective type, then $X$ has
Du Bois singularities.
\end{theorem}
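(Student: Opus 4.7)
The plan is to combine the alternative characterization of Du Bois singularities in Theorem \ref{AlternateDuBoisCharacterization} with the surjectivity of Proposition \ref{GeneralizedKovacsSurjectivity}, and then to exploit $F$-injectivity after reduction modulo $p$ to upgrade that surjectivity to an isomorphism. Throughout, embed $X$ in a smooth variety $Y$, take a strong log resolution $\pi : \tld Y \to Y$ of $X$, and let $E$ denote the reduced pre-image of $X$ in $\tld Y$, so that $\DuBois{X} \qis \myR \pi_* \O_E$ and the question becomes whether $\O_X \to \myR \pi_* \O_E$ is a quasi-isomorphism.

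First I would dispose of the $h^0$ part. A dense set of characteristic $p$ models $X_s$ of $X$ are $F$-injective, hence weakly normal by Theorem \ref{FInjectiveImpliesWeaklyNormal}, and in particular seminormal. Seminormality descends through the usual spreading out argument, so $X$ itself is seminormal, and Lemma \ref{SeminormalDuBoisComplex} gives that $\O_X \to h^0(\DuBois{X})$ is an isomorphism. What remains is to show that $R^i\pi_*\O_E = 0$ for every $i > 0$, equivalently, that the cone $C$ of $\O_X \to \myR\pi_*\O_E$ satisfies $C \qis 0$.

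Now I would argue by contradiction: suppose $C \not\qis 0$ and let $P$ be a minimal associated prime of the non-Du Bois locus. By the remark following Proposition \ref{GeneralizedKovacsSurjectivity}, after inverting a single element of $R \setminus P$ I may assume that $P$ is the unique minimal associated prime of this locus and that
\[
h^{-i}(\myR \pi_* \omega_E^{\mydot}) \hookrightarrow h^{-i}(\omega_X^{\mydot})
\]
is injective for every $i$. Dualizing via local duality, the natural map
\[
H^i_P(R_P) \twoheadrightarrow \bH^i_P(\myR \pi_* \O_{E_P})
\]
is surjective for every $i$, and by the assumption $C \not\qis 0$ it fails to be injective for some particular $i_0$.

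Finally I would spread the entire picture out over a finitely generated $\bZ$-subalgebra $A \subset k$, obtaining $R_A$, $\pi_A : \tld Y_A \to Y_A$, $E_A$ and $P_A$. For the dense set of closed points $s \in \Spec A$ where $R_s$ is $F$-injective, the surjection above specializes to a Frobenius-equivariant surjection, and the nonzero kernel from characteristic zero persists as a nonzero Frobenius-stable kernel $K^{i_0}_s$ for Zariski-densely many $s$. The core of the proof is then to show that $K^{i_0}_s$ must in fact vanish. The inputs I would use are: that the surjection is Frobenius-equivariant; that Frobenius is injective on $H^{i_0}_{P_s}(R_s)$ by $F$-injectivity; and that the simple normal crossings divisor $E_s$ is \'etale-locally a quotient of a polynomial ring by a squarefree monomial ideal, hence $F$-pure, which gives a compatible Frobenius-injectivity on $\bH^{i_0}_{P_s}(\myR \pi_{s*}\O_{E_s})$ as well. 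I expect this to be the main obstacle: it requires a careful Hartshorne--Speiser--Lyubeznik style analysis of the Frobenius-stable Artinian kernel, or equivalently (after Matlis duality) a Cartier-operator-surjectivity argument on the cokernel of $h^{-i_0}(\myR \pi_{s*}\omega_{E_s}^{\mydot}) \hookrightarrow h^{-i_0}(\omega_{X_s,P_s}^{\mydot})$, to force $K^{i_0}_s = 0$. Once this vanishing is obtained in characteristic $p$, descent gives the same vanishing in characteristic zero, contradicting the nonvanishing of $C$ at $P$ and completing the proof.
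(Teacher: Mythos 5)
Your setup (seminormality for $h^0$, the minimal non-Du Bois prime, the surjectivity from Proposition \ref{GeneralizedKovacsSurjectivity}, and reduction mod $p$) matches the paper, but the core of your argument has a genuine gap, and it is exactly at the point you flag as "the main obstacle." You propose to kill the kernel $K^{i_0}_s$ of the Frobenius-equivariant surjection $H^{i_0}_{P_s}(R_s) \twoheadrightarrow \bH^{i_0}_{P_s}(\myR \pi_{s*}\O_{E_s})$ using (a) Frobenius injectivity on the source and (b) a claimed Frobenius injectivity on the target coming from $F$-purity of the snc divisor $E_s$. Even granting (b), these inputs cannot suffice: the kernel of an equivariant surjection between modules with injective Frobenius is itself a Frobenius-stable submodule on which Frobenius acts \emph{injectively}, so no Hartshorne--Speiser--Lyubeznik argument (which only controls the Frobenius-\emph{nilpotent} part of an Artinian module) can force it to vanish; dually, Cartier surjectivity on the cokernel of $h^{-i_0}(\myR\pi_{s*}\omega_{E_s}^{\mydot}) \hookrightarrow h^{-i_0}(\omega_{X_s}^{\mydot})$ does not force that cokernel to be zero. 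Moreover (b) itself is suspect: $F$-purity of $E_s$ controls local cohomology of $E_s$ at its own points, not the hypercohomology with supports of the pushforward complex $\myR\pi_{s*}\O_{E_s}$ on $X_s$, which mixes in the higher direct images $R^i\pi_{s*}\O_{E_s}$ --- precisely the objects whose vanishing is in question.

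The missing idea is that Frobenius acts in the \emph{opposite} way on those higher direct images: a sufficiently large power of Frobenius \emph{annihilates} $h^i(\myR\pi_{s*}\O_{E_s})$ for $i>0$. This is the paper's Theorem \ref{FrobeniusKillsDuBois}, proved by writing $h^i(\myR\pi_*\O_E) \cong h^{i+1}(\myR\pi_*\O_{\tld Y}(-E))$ and factoring $F^e$ through $h^{i+1}(\myR\pi_*\O_{\tld Y}(-qE))$, which for $q \gg 0$ factors through $h^{i+1}(\myR\pi_*\O_{\tld Y}(-nD)) = 0$ by Serre vanishing for the $\pi$-ample divisor $-D$ supported on $E$. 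With this in hand the endgame is not to analyze the kernel of the surjection but to use the degeneration of the local cohomology spectral sequence (the $h^i$ for $i>0$ being supported at the closed point $Q_t$) to extract a Frobenius-compatible \emph{injection} $h^i(\myR\pi_{t*}\O_{E_t})_{Q_t} \hookrightarrow H^{i+1}_{Q_t}((R_t)_{Q_t})$; Frobenius kills the left side and is injective on the right side, so the left side is zero. Your proposal never identifies this annihilation statement, and without it the contradiction does not materialize. (The specialization of the characteristic-zero injectivity to the models, which you pass over in one sentence, also requires the freeness and duality bookkeeping of Proposition \ref{PropositionReducingOurInjectivity}, but that is a technical rather than conceptual issue.)
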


\begin{remark}
The definition of \emph{dense $F$-injective type} is given in Definition \ref{FBlankDefinition}.
\end{remark}

Let us now sketch the proof of Theorem \ref{TheoremFInjectiveImpliesDuBois}.  We will discuss the process of reduction to characteristic $p$
later in this section.  The rest of the proof can then be broken down into three steps.

\begin{itemize}
\item[(i)]  We will show in Theorem \ref{FrobeniusKillsDuBois} that, after generically reducing to characteristic $p$, the
higher cohomology of $\DuBois{X}$ is annihilated by a sufficiently high power of Frobenius.
\item[(ii)]  In the previous section, we proved that if $\Spec R_P \setminus P$ is Du Bois, then the natural map \[H^i_P(R_P) \rightarrow \bH^i_P(X,
\DuBois{X_P})\] is surjective.  In Proposition \ref{PropositionReducingOurInjectivity}, we will show that the same surjectivity can also be
obtained after generic reduction to positive characteristic.
\item[(iii)]  We finally apply a spectral sequence argument which will show that the natural map
\[
h^i(\DuBois{X_P}) \rightarrow H^{i+1}_P(R_P)
\]
is injective for $i \geq 1$.  Frobenius acts injectively on the right, and annihilates the left, proving that $h^i(\DuBois{X_P}) = 0$
for $i > 0$.
\end{itemize}
This then allows us to conclude that our original variety in characteristic zero had Du Bois singularities as well.

Before actually proving this theorem, let us first state a corollary.

\begin{corollary}
\label{CorollaryFPureImpliesDuBois}
Suppose that $X$ is a scheme of finite type over a field of characteristic zero of dense $F$-pure type.  Then $X$ has Du Bois
singularities.
\end{corollary}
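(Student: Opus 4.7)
The plan is to deduce this corollary directly from Theorem \ref{TheoremFInjectiveImpliesDuBois} by showing that dense $F$-pure type implies dense $F$-injective type. Once that implication is established, the conclusion is immediate from the main theorem.

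The first step is to recall the observation already noted in Remark \ref{FPureRemark}: an $F$-finite $F$-pure local ring is automatically $F$-injective. The reasoning is that for $F$-finite rings, purity of the Frobenius $F : R \rightarrow {}^1 R$ is equivalent to the map splitting as a map of $R$-modules; applying $\myR \Gamma_m$ to such a splitting produces a splitting (and hence an injection) of the induced map $H^i_m(R) \rightarrow H^i_m({}^1 R)$ on each local cohomology module.

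Next, I would verify that the characteristic $p$ models arising in the reduction-to-characteristic-$p$ setup of Section \ref{ReductionToCharacteristicP} are $F$-finite. After spreading $X$ out over a finitely generated $\bZ$-algebra and specializing to a closed point of $\Spec \bZ$-dense image, the resulting models are of finite type over finite (hence perfect) fields, and therefore $F$-finite. Consequently, any $F$-pure model in the family is also $F$-injective, so a dense set of $F$-pure models automatically constitutes a dense set of $F$-injective models; that is, dense $F$-pure type implies dense $F$-injective type.

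Finally, since $F$-purity of the Frobenius map implies reducedness (any nilpotent element would map to zero under some power of $F$, contradicting purity), the scheme $X$ is automatically reduced. The hypotheses of Theorem \ref{TheoremFInjectiveImpliesDuBois} are therefore satisfied, and we conclude that $X$ has Du Bois singularities. There is no substantive obstacle in this argument; the only care needed is the bookkeeping check that the spreading-out procedures used to define dense $F$-pure type and dense $F$-injective type are compatible, but both are built from the same family of integral models and specializations, so no additional work is required.
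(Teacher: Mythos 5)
Your proposal is correct and matches the paper's (implicit) argument exactly: the paper states this corollary without a separate proof precisely because Remark \ref{FPureRemark} already records that $F$-finite $F$-pure local rings are $F$-injective (by applying $\myR\Gamma_m$ to the splitting of Frobenius), so dense $F$-pure type implies dense $F$-injective type and Theorem \ref{TheoremFInjectiveImpliesDuBois} applies. Your additional checks (that the characteristic $p$ models are $F$-finite because they are of finite type over finite fields, and that $X$ is reduced) are exactly the right bookkeeping.
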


\begin{remark}
The key point here is that we do \emph{not} need to assume that $X$ is $\bQ$-Gorenstein or normal.
\end{remark}

Fedder's criterion is a powerful characterization of $F$-purity which is often easily computable (especially if $I$ is a principal ideal).  Suppose that $(S, m)$ be a regular
local ring of characteristic $p$ and let $R = S/I$.  Then $R$ is $F$-pure if and only if $(I^{[p]} : I) \nsubset m^{[p]}$; see
\cite[1.12]{FedderFPureRat}.

\begin{remark}
In his thesis, Davis Doherty used Corollary \ref{CorollaryFPureImpliesDuBois} and Fedder's criterion to prove that certain (non-normal) schemes
had Du Bois singularities; see \cite{DohertySingularitiesOfGenericProjectionHypersurfaces}.
\end{remark}

We now lay out the basics of reduction to characteristic $p$.  An excellent and far more complete reference is
\cite[2.1]{HochsterHunekeTightClosureInEqualCharactersticZero}.  Similar reductions have also been used when establishing links between
the other characteristic zero and $F$-singularities mentioned in the introduction.

Let $R$ be a finitely generated algebra over a field $k$ of characteristic zero.  We can write $R = k[x_1, \ldots, x_n]/I$ for some
ideal $I$ and let $S$ denote $k[x_1, \ldots, x_n]$.  Let $X = \Spec R$, $Y = \Spec S$, and note that we can certainly assume that the
codimension of $X$ in $Y$ is greater than $1$ if desired.  Let $\pi : B_J(Y) = \tld Y \rightarrow Y$ be a strong (projective) log
resolution of $X$ in $Y$ corresponding to the blow-up of an ideal $J$, which has exceptional divisor $E$ (where $\O_{\tld Y}(-E) = (J
\O_{\tld Y})_{\red}$) mapping to $X$.

There exists a finitely generated $\bZ$ algebra $A \subset k$ (including all the coefficients of the generators of $I$ and
$J$), a finitely generated $A$ algebra $R_A \subset R$, an ideal $J_A \subset R_A$, and schemes $\tld Y_A$ and $E_A$ of finite type
over $A$ such that $R_A \tensor_A k = R$, $J_A R = J$, $Y_A \times_{\Spec A} \Spec k = Y$, and $E_A \times_{\Spec A} \Spec k = E$ with
$E_A$ effective and with support equal to the blow-up of $J_A$.  We may localize $A$ at a single element so that $Y_A$ is smooth over
$A$ and $E_A$ is a simple normal crossings divisor over $A$, if desired.  By further localizing $A$ (at a single element), we may assume
any finite set of finitely generated $R_A$ modules is $A$-free (see \cite[3.4]{HunekeTightClosureBook} or
\cite[2.3]{HochsterRobertsFrobeniusLocalCohomology}) and we may assume that $A$ itself is regular.  We may also assume that a fixed
affine cover of $E_A$ and a fixed affine cover of $\tld Y_A$ are also $A$-free.

We will now form a family of positive characteristic models of $X$ by looking at all the rings $R_t = R_A \tensor_A k(t)$ where $k(t)$
is the residue field of a maximal ideal $t \in T = \Spec A$.  Note that $k(t)$ is a finite, and thus perfect, field of characteristic
$p$.  We may also tensor the various schemes $X_A$, $E_A$, etc. with $k(t)$ to produce a characteristic $p$ model of an entire
situation.

By making various cokernels of maps free $A$-modules, we may also assume that maps between modules that are surjective (respectively
injective) over $k$ correspond to surjective (respectively injective) maps over $A$, and thus surjective (respectively injective) in
our characteristic $p$ model as well; see \cite{HochsterHunekeTightClosureInEqualCharactersticZero}.

Various properties of rings that we are interested in descend well from characteristic zero.  For example, smoothness, normality, being
reduced, and being Cohen-Macaulay all descend well \cite[Appendix 1]{HunekeTightClosureBook}.  Specifically, $R_t$ has one of the above
properties for an open set of maximal ideals of $A$ if and only if $R_{(\Frac A)}$ has the same property (in which case so does $R$).
In this spirit, we will need the following lemma.

\begin{lemma}
\label{DenseSeminormalityImpliesSeminormality}
Suppose that $R$ is reduced and of finite type over a field of characteristic zero and that $R_t$ is a family of positive
characteristic models as above (where $A$ is sufficiently large).   If $R_t$ is seminormal for a Zariski dense set of choices of maximal $t \in \Spec A$, then $R$ is
seminormal.
\end{lemma}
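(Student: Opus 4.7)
The plan is to argue by contradiction. Suppose $R$ is not seminormal. Since $R$ is excellent, the seminormalization yields a proper finite subintegral extension $R \subsetneq R^{\sn}$. I will descend this extension to the family over $A$ and use seminormality of even a single fiber $R_t$ (for $t$ in the given Zariski dense set) to derive a contradiction.

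After enlarging $A$, one obtains a finite extension of $A$-algebras $R_A \subset R_A^{\sn}$ descending $R \subset R^{\sn}$. By further localizing $A$ at a single element, I would arrange three conditions simultaneously: (i) $M := R_A^{\sn}/R_A$ is a nonzero free $A$-module (via generic freeness), so that $M \otimes_A k(t) \neq 0$ for every maximal ideal $t \in \Spec A$; (ii) $R_A^{\sn} \otimes_A k(t)$ is reduced for every $t$ in some dense open $U \subset \Spec A$; and (iii) for every $t \in U$ (after possibly shrinking $U$), the induced map $\Spec(R_A^{\sn} \otimes_A k(t)) \to \Spec R_t$ is a bijection inducing isomorphisms on residue fields, i.e.\ the fiber extension remains subintegral.

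With these arranged, the conclusion is immediate. By the Zariski density hypothesis, there exists a maximal ideal $t \in U$ for which $R_t$ is seminormal. Then $R_t \subset R_A^{\sn} \otimes_A k(t)$ is a finite subintegral extension of reduced rings, and by subintegrality the larger ring embeds in the total ring of fractions of $R_t$, hence into its integral closure. By Definition~\ref{SeminormalityExtensionDefinition}, seminormality of $R_t$ forces this extension to be trivial, so $R_A^{\sn} \otimes_A k(t) = R_t$. This contradicts (i), which says exactly that the fibers of the cokernel $M$ are nonzero at every $t$.

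I expect (iii) to be the main technical obstacle: one must show that subintegrality descends to a dense open family of fibers. This should follow from the constructibility of ``bijection of spectra'' and ``isomorphism of residue fields'' in families of finite type, combined with the facts that these properties hold on the generic fiber (since $R \subset R^{\sn}$ is itself subintegral) and that cokernels of relevant maps can be made free over $A$, preserving injectivity and surjectivity under base change, in the style of the discussion cited from \cite[2.1]{HochsterHunekeTightClosureInEqualCharactersticZero}.
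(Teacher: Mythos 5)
Your overall architecture --- descend the proper extension $R \subsetneq \SN{R}$ to $R_A \subset S_A$, make the cokernel a nonzero free $A$-module, and contradict seminormality of a fiber --- is sound, and your steps (i) and (ii) together with the final contradiction are fine. The problem is step (iii), which you correctly identify as the crux but do not prove. ``Bijection on spectra together with isomorphisms of residue fields at \emph{every} (including non-closed) point of every fiber'' is not something you can extract from generic freeness and constructibility in the style of \cite[2.1]{HochsterHunekeTightClosureInEqualCharactersticZero}: those techniques control finitely many modules and maps between them, whereas your condition quantifies over all scheme points of all fibers, and residue-field behavior at non-closed points is precisely what can degenerate in positive characteristic. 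The paper's own pinch-point example, $k[x^2,y,xy] \subset k[x,y]$ in characteristic $2$, is a finite extension inducing a bijection on spectra with isomorphic residue fields at all closed points but a purely inseparable extension at a generic point of the singular locus; it illustrates exactly the failure mode your constructibility heuristic would have to rule out. As written, then, (iii) is an unproved assertion and the argument has a genuine gap there.

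The assertion (iii) is in fact true, but the honest way to see it is Swan's structure theorem: a finite subintegral extension of excellent reduced rings is a finite chain of elementary subintegral extensions $R_i \subset R_i[c]$ with $c^2, c^3 \in R_i$, and such an extension stays subintegral after any base change keeping the rings reduced, since in any residue field the equations $\bar c^{\,2} = \alpha$, $\bar c^{\,3} = \beta$ determine $\bar c$. Once you invoke this, you have essentially rediscovered the paper's proof, which short-circuits the whole issue: it uses Theorem \ref{SeminormalityIntrinsic} to produce a \emph{single} elementary subintegral extension $R \subset S = (R[x]/(x^2 - a, x^3 - b))_{\red}$, reduces it so that $S_t$ is reduced and the cokernel is free and nonzero (hence $c_t \notin R_t$ while $c_t^2, c_t^3 \in R_t$), and contradicts seminormality of $R_t$ directly from the square-and-cube criterion --- no descent of bijectivity on spectra or of residue-field isomorphisms is ever required. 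You should either adopt that route or explicitly insert the factorization into elementary subintegral extensions to justify (iii).
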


This was done for domains in \cite[5.31]{HochsterRobertsFrobeniusLocalCohomology}.
\begin{proof}
Suppose that $R$ is not seminormal, then there are elements $a, b \in R$ such that $a^3 = b^2$ and no $d \in R$ with $d^2 = a$ and $d^3 = b$.  Therefore there exists an elementary subintegral extension $R \subset S = (R[x]/(x^2 - a, x^3 - b))_{\red}$ inside the normalization of $R$; see \cite{SwanSeminormality}.  Let $c$ denote the image of $x$ in $S$ and note that $c \notin R$ by assumption so that $R$ is a proper subset of $S$.  We then reduce this inclusion to an extension of models
\begin{equation}
\label{ElementarySubintegralExtensionModel}
R_A \subset S_A,
\end{equation}
of rings of finite type over $A$, where $A$ is a sufficiently large finitely generated $\bZ$-algebra (as above) and such that $R_A \subset R$ and $S_A \subset S$ in a compatible way (in particular, $S_A$ is reduced since $S$ is).  We localize $A$ so that the cokernel of \ref{ElementarySubintegralExtensionModel} is $A$-free (note the extension must be proper since it is proper in characteristic zero).  We can certainly arrange things so that there still exists $c_A \in S_A \setminus R_A$ which is identified with $c \in S$ such that $c_A^2 = a_A$, $c_A^3 = b_A$.  Note that in particular $c_A$ has non-zero image $\overline c_A$ in $S_A / R_A$.

We now choose a generic maximal ideal $t$ in $A$ satisfying the properties that $R_t = R_A \tensor A/t$ is seminormal and that $S_t = S_A \tensor A/t$ is reduced.  This gives us an inclusion of rings $R_t \subset S_t$ and a non-zero element $c_A \tensor 1 = c_t \in S_t$.  The element $\overline c_t \in S_t/ R_t \cong (S_A / R_A) \tensor A/t$ is non-zero by \cite[2.3(c)]{HochsterRobertsFrobeniusLocalCohomology} and so $c_t$ has no pre-image $R_t$.  However, $c_t^2, c_t^3 \in R_t$, contradicting the seminormality of $R_t$ (or the fact that $S_t$ is reduced).
\end{proof}

The following lemma is very useful for reducing cohomology to prime characteristic.  A sketch of the proof can be found in \cite{HaraRatImpliesFRat}.
\begin{lemma}\cite[4.1]{HaraRatImpliesFRat}
\label{HaraLemma}
Let $X$ be a noetherian separated scheme of finite type over a noetherian ring $A$, and let $\sF$ be a quasi-coherent sheaf on $X$,
flat over $A$.  Suppose that $H^i(X, \sF)$ is a flat $A$-module for each $i > 0$.  Then one has an isomorphism
\[
H^i(X, \sF) \tensor_A k(t) \cong H^i(X_{k(t)}, \sF_{k(t)})
\]
for every point $t \in T = \Spec A$ and $i \geq 0$, where $k(t)$ is the residue field of $t \in T$, $X_{k(t)} = X \times_T
\Spec(k(t))$, and $\sF_{k(t)}$ is the induced sheaf on $X_{k(t)}$.
\end{lemma}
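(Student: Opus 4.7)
The plan is to reduce the statement to a purely homological fact about bounded complexes of flat modules, applied to the \v{C}ech complex associated to a finite affine cover of $X$.

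First, since $X$ is separated of finite type over the noetherian ring $A$, I can choose a finite affine open cover $\mathcal{U} = \{U_0, \ldots, U_n\}$ of $X$ with the property that every intersection $U_{i_0} \cap \cdots \cap U_{i_k}$ is affine (separatedness gives this). The \v{C}ech complex $C^{\mydot}(\mathcal{U}, \sF)$ then computes $H^i(X, \sF)$, and each term is of the form $\Gamma(V, \sF)$ for some affine $V$, hence is a flat $A$-module because $\sF$ is $A$-flat and quasi-coherent. Moreover, the \v{C}ech complex base-changes correctly: for any point $t \in T$ one has $C^{\mydot}(\mathcal{U}, \sF) \tensor_A k(t) \cong C^{\mydot}(\mathcal{U}_{k(t)}, \sF_{k(t)})$, and the right-hand side computes $H^i(X_{k(t)}, \sF_{k(t)})$ (since the affine cover and its separatedness persist after base change).

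With this setup, the problem reduces to the following homological lemma: if $C^{\mydot}$ is a bounded complex of flat $A$-modules such that $H^i(C^{\mydot})$ is flat for every $i > 0$, then for every $A$-module $M$ the natural map $H^i(C^{\mydot}) \tensor_A M \to H^i(C^{\mydot} \tensor_A M)$ is an isomorphism for all $i \geq 0$. This in turn I would prove by downward induction on the degree using the short exact sequences
\[
0 \to Z^i \to C^i \to B^{i+1} \to 0, \qquad 0 \to B^i \to Z^i \to H^i \to 0,
\]
and chasing the resulting long Tor exact sequences; one deduces inductively that the cycle modules $Z^i$ and boundary modules $B^i$ are $A$-flat for $i \geq 1$ (with $B^0 = 0$ trivial). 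Once this flatness is in hand, tensoring the defining sequences with $M$ preserves exactness, and one reads off the claimed identification.

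The main subtlety is the handling of the bottom degree: the hypothesis gives flatness of $H^i$ only for $i > 0$, so one cannot conclude that $Z^0 = H^0$ is flat. Fortunately, one only needs $B^1$ to be flat, which the induction does produce, in order to keep $0 \to Z^0 \tensor M \to C^0 \tensor M \to B^1 \tensor M \to 0$ exact and thereby identify $H^0(C^{\mydot}) \tensor M$ with $H^0(C^{\mydot} \tensor M)$. Beyond this bookkeeping there is no real obstacle; the result is a mild variant of the classical cohomology-and-base-change paradigm and is well-suited to a \v{C}ech-theoretic proof.
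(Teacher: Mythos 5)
Your proof is correct, and it is essentially the argument the paper points to: the paper does not prove this lemma itself but only cites Hara's \cite[4.1]{HaraRatImpliesFRat}, whose sketch is exactly this reduction to a bounded complex of flat $A$-modules via the \v{C}ech complex of a finite affine cover, followed by the standard d\'evissage through the cycle and boundary modules. Your handling of the bottom degree (needing only flatness of $B^1$, not of $H^0$) is the right bookkeeping and closes the one point where the hypothesis is weaker than full flatness of all cohomology.
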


\begin{remark}
In particular, given a map of schemes over a field of characteristic zero, $\pi : E \rightarrow X$, and a corresponding family of maps
of schemes, $\pi_t : E_t \rightarrow X_t$ as above, the previous lemma and (faithfully) flat base change imply that $h^i(\myR \pi_*
\O_E) = 0$ for some $i$ if and only if $h^i(\myR ({\pi_t})_* \O_{E_t}) = 0$ for the same $i$ and all but a finite number of $t$.
\end{remark}

\begin{definition}
\label{FBlankDefinition}
Given a class of singularities, $F$-$\blank$ (such as $F$-injective, $F$-pure, $F$-rational, $F$-regular) in characteristic $p$, we say
that a ring $R$ of finite type over a field of characteristic zero has \emph{open $F$-$\blank$ type} if for a single (equivalently, every sufficiently large) choice of $A$ as
above, $R_t$ is $F$-$\blank$ for an open set of maximal ideals $t \in \Spec A$.  We say that such a ring $R$ has \emph{dense
$F$-$\blank$ type} (or simply \emph{$F$-$\blank$ type}) if $R_t$ is $F$-$\blank$ for a Zariski-dense set of maximal ideals $t \in \Spec
A$.
\end{definition}

Finally we should note that extending the ring $A$ by a finite number of additional scalars from $k$ (and thus also adding those
scalars to $R_A$) does not change whether a dense set of models is $F$-injective.  Extending $A$ by additional scalars and modding out
by a new maximal ideal $t \in A$ simply yields a model which extends the finite field we are working with (from some original model).
Flat base change for local cohomology makes it easy to see that this does not affect the property of $F$-injectivity since all these
field extensions are of finite fields.  This technique has been used for other classes of singularities as well, such as $F$-rational,
$F$-regular and $F$-pure, and this fact has been explicitly stated for $F$-injective singularities in
\cite{MehtaSrinivasRatImpliesFRat}.

\section{Proof of the main theorem}
\label{SectionProofOfMain}

The first goal of this section is to show that the action of Frobenius annihilates the higher cohomology of $\DuBois{X}$ after an
appropriate reduction to characteristic $p$.  This is perhaps not unexpected since rational singularities exhibit a similar behavior.

Suppose that $(R, m)$ is a normal local ring reduced generically from characteristic zero to $p$ with a resolution $\tld X \rightarrow
\Spec R$ satisfying the condition that $\Spec R \setminus m$ has rational singularities.  Then $H^{d-1}(\tld X, \O_{\tld X})$ naturally injects
into $H^d_m(R)$ by the dual Grauert-Riemenschneider vanishing.  Furthermore, that injection naturally identifies $H^{d-1}(\tld X,
\O_{\tld X})$ with $0_{H^d_m(R)}^*$, the tight closure of zero in $H^d_m(R)$; see \cite[4.7]{HaraRatImpliesFRat},
\cite[7.3]{MehtaSrinivasRatImpliesFRat}, and \cite[5.4]{HaraInterpretation}.  In the Du Bois case, we have perhaps an even more
interesting behavior.  Without reference to any local cohomology, Frobenius naturally acts on the higher cohomology of the Du Bois
complex, and we will show that Frobenius annihilates it.

\begin{theorem}
\label{FrobeniusKillsDuBois}
Suppose that $R$ is a ring of finite type over a perfect field $k$ of characteristic $p$.  Further suppose that $\xymatrix{k[x_1,
\ldots, x_n] \ar@{->>}[r] & R,}$ corresponds to a (codimension 2 or greater) embedding of $X = \Spec R$, which was reduced generically
from characteristic zero with a strong (projective) log resolution of $X$, $\pi : \tld Y \rightarrow Y = \Spec k[x_1, \ldots, x_n]$.
Let $E$ be the reduced pre-image of $X$ in $\tld Y$.  Then
\[
F^e(h^i(\myR \pi_* \O_E)) = F^e(h^i(\DuBois{X})) = 0
\]
for all $i > 0$ and all sufficiently large $e$.
\end{theorem}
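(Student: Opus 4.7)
I plan to factor the iterated absolute Frobenius on $\O_E$ through an infinitesimal thickening of $E$ and then reduce the claim to a vanishing statement for higher direct images. Since $E$ is a reduced Cartier divisor on the smooth scheme $\tld Y$, its ideal sheaf $\sI_E$ is invertible, hence locally principal; in particular the Frobenius bracket power satisfies $\sI_E^{[p^e]}=\sI_E^{p^e}$. The absolute Frobenius $F^e\colon\O_{\tld Y}\to F^e_*\O_{\tld Y}$ therefore carries $\sI_E$ into $F^e_*\sI_E^{p^e}$ and descends to a natural morphism
\[
\alpha_e\colon \O_E\longrightarrow F^e_*\bigl(\O_{\tld Y}/\sI_E^{p^e}\bigr)\;=:\;F^e_*\O_{p^eE},
\]
whose composition with the canonical surjection $F^e_*\O_{p^eE}\twoheadrightarrow F^e_*\O_E$ is exactly the absolute Frobenius on $\O_E$.

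Applying $\myR\pi_*$ and using that $F^e$ is finite (so $\myR\pi_*F^e_*=F^e_*\myR\pi_*$), the Frobenius action on $h^i(\myR\pi_*\O_E)$ factors through $F^e_* R^i\pi_*\O_{p^eE}$ for every $i$. It therefore suffices to exhibit some $e$ for which $R^i\pi_*\O_{p^eE}=0$ for all $i\geq 1$. From the short exact sequence $0\to\O_{\tld Y}(-p^eE)\to\O_{\tld Y}\to\O_{p^eE}\to 0$ together with the vanishing $R^i\pi_*\O_{\tld Y}=0$ for $i>0$---which holds in characteristic zero because $Y$ is smooth and persists after generic reduction to characteristic $p$ by Lemma~\ref{HaraLemma}---the long exact sequence gives
\[
R^i\pi_*\O_{p^eE}\;\cong\;R^{i+1}\pi_*\O_{\tld Y}(-p^eE),\qquad i\geq 1.
\]
Thus everything comes down to showing $R^j\pi_*\O_{\tld Y}(-p^eE)=0$ for $j\geq 2$ and $p^e$ sufficiently large.

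The principal obstacle is this vanishing. I would prove it in characteristic zero and then transport it to characteristic $p$ by a second application of Lemma~\ref{HaraLemma}. Since $-p^eE$ is very anti-effective, Kawamata--Viehweg cannot be applied directly, and the plan is to route through Grothendieck duality:
\[
\myR\pi_*\O_{\tld Y}(-p^eE)\;\simeq\;\myR\sHom\bigl(\myR\pi_*\omega_{\tld Y}(p^eE),\,\omega_Y\bigr),
\]
converting the statement into a vanishing of $\Ext^j(-,\omega_Y)$ for $j\geq 2$. Because $\pi$ arises from a composition of blow-ups of $\sI_X$, a suitable positive combination $\sum a_iE_i$ of the components of $E$ satisfies $\O_{\tld Y}(-\sum a_iE_i)\cong\sI_X\cdot\O_{\tld Y}$, which is $\pi$-ample; for $p^e$ large, $p^eE\geq \sum a_iE_i$, and relative Kawamata--Viehweg then forces $\myR\pi_*\omega_{\tld Y}(p^eE)$ to collapse to its zeroth cohomology, after which the required $\Ext$-vanishing follows from the smoothness of $Y=\bA^n$ and the depth properties of $\pi_*\omega_{\tld Y}(p^eE)$. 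The technical heart is arranging this relative positivity and deducing the collapse; once it is in place, the factorization established in the first paragraph immediately yields $F^e\bigl(h^i(\myR\pi_*\O_E)\bigr)=F^e\bigl(h^i(\DuBois{X})\bigr)=0$ for all $i>0$ and $e\gg 0$, as required.
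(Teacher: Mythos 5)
Your first paragraph is correct and is precisely the factorization used in the paper: Frobenius on $\O_E$ factors as $\O_E\to F^e_*\O_{p^eE}\to F^e_*\O_E$, and after applying $\myR\pi_*$ and using $R^i\pi_*\O_{\tld Y}=0$ (descended from characteristic zero, where $\tld Y\to Y$ resolves the rational singularities of the smooth $Y$) the Frobenius on $h^i(\myR\pi_*\O_E)$ is identified with the composite $h^{i+1}(\myR\pi_*\O_{\tld Y}(-p^eE))\to h^{i+1}(\myR\pi_*\O_{\tld Y}(-E))$. The gap is in what you do next. You reduce to the outright vanishing $R^{j}\pi_*\O_{\tld Y}(-p^eE)=0$ for $j\geq 2$, which is far stronger than what is needed, and your proposed proof of it cannot work: relative Kawamata--Viehweg requires the twisting divisor to be $\pi$-nef and $\pi$-big, whereas $p^eE$ is supported on the exceptional locus and, as you yourself note, the positive combination $G=\sum a_iE_i$ with $\O_{\tld Y}(-G)=\sI_X\cdot\O_{\tld Y}$ has $-G$ $\pi$-ample; so there are contracted curves $C$ with $E\cdot C<0$ (e.g.\ a line in the $\mathbb{P}^2$ over a blown-up point), $p^eE$ is not $\pi$-nef, and the inequality $p^eE\geq G$ points in the wrong direction for positivity. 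Even granting a collapse, the final $\Ext$-vanishing would need $\pi_*\omega_{\tld Y}(p^eE)$ to have depth $\geq n-1$, which is not established. There is also a reduction-mod-$p$ obstruction: the sheaf $\O_{\tld Y}(-p^eE)$ depends on the residue characteristic of the chosen point of $\Spec A$, so Lemma \ref{HaraLemma}, which is applied to finitely many sheaves fixed before localizing $A$, cannot be invoked for it.

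The repair, which is what the paper does, is to notice that you do not need $h^{i+1}(\myR\pi_*\O_{\tld Y}(-p^eE))$ to vanish; you only need the \emph{map} from it to $h^{i+1}(\myR\pi_*\O_{\tld Y}(-E))$ to be zero, since your first paragraph shows Frobenius factors through that map. Since $-G$ is $\pi$-ample, relative Serre vanishing \cite[2.2.1]{EGAIII1} --- which holds over any base and in any characteristic, so no descent from characteristic zero is required for this step --- gives an $n$ with $R^{j}\pi_*\O_{\tld Y}(-nG)=0$ for all $j>0$. For $p^e\geq n\cdot\max_i a_i$ one has $E\leq nG\leq p^eE$ as divisors, hence inclusions $\O_{\tld Y}(-p^eE)\subset\O_{\tld Y}(-nG)\subset\O_{\tld Y}(-E)$, and the map in question factors through the zero module $h^{i+1}(\myR\pi_*\O_{\tld Y}(-nG))$. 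Combined with your first paragraph this completes the proof.
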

\begin{proof}
Using the alternate Du Bois construction of Theorem \ref{AlternateDuBoisCharacterization}, this proof is actually remarkably easy!  We have a
natural isomorphism $h^i(\myR \pi_* \O_E) \cong h^{i+1}(\myR \pi_* \O_{\tld Y}(-E))$ for $i > 0$ since the resolution $\pi : \tld Y \rightarrow Y$ was reduced generically from
characteristic zero where it had rational singularities.  There is an anti-effective $\pi$-ample divisor $D$ with support equal to $E$
corresponding to the ideal that was blown up.  Now, the Frobenius action on $E$ and $\tld Y$ is somewhat subtle.  We have the
following factorization for $q = p^e$ and $i > 0$
\[
\xymatrix{
h^i(\myR \pi_* \O_E) \ar@{=}[r] \ar[d] & h^{i+1}(\myR \pi_* \O_{\tld Y}(-E) \ar[d]^{F^e})\\
h^i (\myR \pi_* \O_{qE}) \ar@{=}[r] \ar[d] & h^{i+1}(\myR \pi_* \O_{\tld Y}(-qE) \ar[d]^{\rho}) \\
h^i (\myR \pi_* \O_{E}) \ar@{=}[r] & h^{i+1}(\myR \pi_* \O_{\tld Y}(-E)) \\
}
\]
where the composition of the left column is the Frobenius action on $E$, and the map $\rho$ is just the one associated to the inclusion
$\O_{\tld Y}(-qE) \rightarrow \O_{\tld Y}(-E)$.  Note that all these maps are compatible.  We can now choose $n$ large
enough so that $h^{i+1}(\myR \pi_* \O_{\tld Y}(-nD)) = 0$ by Serre Vanishing, \cite[2.2.1]{EGAIII1}.  However, we can pick $q = p^e$
much larger so that we obtain the following factorization:
\[
\O_{\tld Y}(-qE) \subset \O_{\tld Y}(-nD) \subset \O_{\tld Y}(-E)
\]
This means that the composition of the right column in the diagram is zero for $q$ sufficiently large, and thus $F^e(h^i(\myR \pi_*
\O_E)) = 0$ as desired.
\end{proof}

As stated before, our next goal is to force the higher cohomology of the Du Bois complex to embed into local cohomology.
However, to do that we first need a lemma which will be used to reduce properties of a dualizing complex from a variety over $\bC$ to a
model over $A$ (where $A$ is a finitely generated $\bZ$ algebra); also see \cite{SmithFRatImpliesRat}.

\begin{lemma}
\label{LemmaExtBaseChange}
Suppose that $A$ is a finitely generated $\bZ$-algebra that is a regular ring.  Let $S_A = A[x_1, \ldots, x_n]$ and note that $S_A$ is
Gorenstein and we may take $\omega_{S_A}^{\mydot}$ to be $S_A$ (or with a shift if desired).  If $A \subset B$ with $B$ a field (so in
particular it is a flat $A$-algebra) then for any bounded complex of $S_A$-modules $N^{\mydot}$ with coherent cohomology, we have
\[
(\myR \Hom_{S_A}^{\mydot}(N^{\mydot}, S_A)) \tensor_A B \qis (\myR \Hom_{S_A \tensor_A B}^{\mydot}(N^{\mydot} \tensor_A B, S_A
\tensor_A B)).
\]
In particular, if we replace $N^{\mydot}$ by $R_A$, a quotient of $S_A$, then using the characterization of $f^!$ for finite maps, we
see that \[(\omega_{R_A}^{\mydot}) \tensor_A B \qis \omega_{(R_A \tensor_A B)}^{\mydot}.\]  Note that this quasi-isomorphism is not
typically going to be of normalized dualizing complexes; however, up to a shift (by $[\dim A]$), it will be.
\end{lemma}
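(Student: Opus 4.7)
The plan is to compute $\myR \Hom_{S_A}^{\mydot}(N^{\mydot}, S_A)$ via an explicit resolution and then observe that each piece of that resolution commutes with $\tensor_A B$. First I would reduce to the case that $N^{\mydot}$ is concentrated in a single degree (say a finitely generated $S_A$-module $N$) by standard d\'evissage: write $N^{\mydot}$ as a finite successive cone of its cohomology sheaves, use the resulting distinguished triangles and the five-lemma on quasi-isomorphisms. This reduction is harmless because $\myR \Hom^{\mydot}(\blank, S_A)$ is a triangulated functor and $\tensor_A B$ is exact by flatness of $B/A$.

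Next, since $A$ is a regular finitely generated $\bZ$-algebra, the polynomial ring $S_A = A[x_1, \ldots, x_n]$ is regular, and in particular has finite global dimension. Hence the finitely generated $S_A$-module $N$ admits a bounded resolution $P^{\mydot} \to N$ by finitely generated free $S_A$-modules. Then $\myR \Hom_{S_A}^{\mydot}(N, S_A)$ is represented by the bounded complex $\Hom_{S_A}(P^{\mydot}, S_A)$, each term of which is finitely generated and free over $S_A$. For a free module $F = S_A^r$ the canonical map
\[
\Hom_{S_A}(F, S_A) \tensor_A B \longrightarrow \Hom_{S_A \tensor_A B}(F \tensor_A B, S_A \tensor_A B)
\]
is an isomorphism (both sides are $(S_A \tensor_A B)^r$), so the identification holds termwise on the complex $\Hom_{S_A}(P^{\mydot}, S_A) \tensor_A B$.

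Now I would finish by base changing the resolution. Because $B$ is $A$-flat, $P^{\mydot} \tensor_A B$ is still a bounded resolution of $N \tensor_A B$ by finitely generated free $S_A \tensor_A B$-modules, so $\myR \Hom_{S_A \tensor_A B}^{\mydot}(N \tensor_A B, S_A \tensor_A B)$ is computed by $\Hom_{S_A \tensor_A B}(P^{\mydot} \tensor_A B, S_A \tensor_A B)$. Combined with the termwise identification above this gives a quasi-isomorphism
\[
\myR \Hom_{S_A}^{\mydot}(N, S_A) \tensor_A B \qis \myR \Hom_{S_A \tensor_A B}^{\mydot}(N \tensor_A B, S_A \tensor_A B),
\]
and assembling the d\'evissage yields the result for general $N^{\mydot}$. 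The second assertion is then immediate: for the finite surjection $S_A \twoheadrightarrow R_A$ we have $\omega_{R_A}^{\mydot} \qis \myR \Hom_{S_A}^{\mydot}(R_A, S_A)$ by the description of $f^!$ for finite maps, and the same applies after base change to $S_A \tensor_A B \twoheadrightarrow R_A \tensor_A B$.

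The only real subtlety is the shift: the dualizing complex we produce for $R_A \tensor_A B$ agrees with $\omega_{R_A}^{\mydot} \tensor_A B$ \emph{on the nose} up to degree, but the normalized dualizing complex on $R_A$ (whose shift depends on the Krull dimension of a local model) differs from the normalized one on $R_A \tensor_A B$ by $[\dim A]$, since passing from $A$ to a field quotient drops dimension by exactly $\dim A$. I would be careful to take $\omega_{S_A}^{\mydot} = S_A$ on both sides (rather than the normalized version) throughout the argument and then record the shift at the end; this is the only place where bookkeeping could go wrong, and it is the point the lemma's final sentence is flagging.
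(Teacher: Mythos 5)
Your argument is correct and is essentially an expanded version of the paper's own (one-line) proof, which simply cites ``standard techniques involving $\myR\Hom^{\mydot}$'' together with the base-change isomorphism $\Hom_{S_A}(N,S_A)\tensor_A B \cong \Hom_{S_A\tensor_A B}(N\tensor_A B, S_A\tensor_A B)$ for finite modules --- exactly what your free-resolution computation establishes. The d\'evissage to a single module, the termwise identification on finite free modules, the use of flatness of $B$ over $A$, and the bookkeeping of the shift all match what the paper intends.
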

\begin{proof}
This follows from standard techniques involving $\myR \Hom^{\mydot}$ and from the fact that $\Hom_{S_A}(N, S_A) \tensor_A B
\cong \Hom_{S_A \tensor_A B}(N \tensor_A B, S_A \tensor_A B)$ for finite $S_A$-modules $N$.
\end{proof}

\begin{remark}
Obviously the previous statement holds under substantially more general hypotheses.
\end{remark}

The following proposition is the last major step in the proof of Theorem \ref{TheoremFInjectiveImpliesDuBois}.

\begin{proposition}
\label{PropositionReducingOurInjectivity}
Suppose that $X_\bC = \Spec R$ is of finite type over $\bC$ and that $X_\bC \subset Y_{\bC} = \Spec \bC[x_1, \ldots, x_n]$.  Suppose
that $\pi_{\bC} : \tld Y_{\bC} \rightarrow Y_{\bC}$ is a strong log resolution of $X_{\bC}$ in $Y_{\bC}$ and that $E_{\bC} =
(\pi_{\bC}^{-1}(X_{\bC}))_{\red}$.  If the natural morphism
\begin{equation}
\label{EquationToReduceFromKtot}
h^i(\myR (\pi_{\bC})_* \omega_{E_{\bC}}^\mydot) \rightarrow h^i( \omega_{X_{\bC}}^\mydot)
\end{equation}
is injective for all $i$, then for a generic choice of positive characteristic model, $\pi_t : E_t \rightarrow X_t$, the following
natural morphisms are injections for all $i$,
\[
\xymatrix{
h^i(\myR (\pi_t)_* \omega_{E_{t}}^\mydot) \ar@{^{(}->}[r] & h^i(\omega_{X_{t}}^\mydot).
}
\]
\end{proposition}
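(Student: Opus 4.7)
The plan is to spread the entire setup out over a finitely generated $\bZ$-subalgebra $A \subset \bC$, descend the given injectivity from $\bC$ down to $A$ using faithful flatness and freeness, and then base change up to a positive characteristic residue field $k(t)$, identifying the result with the characteristic $p$ model.

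First, choose $A \subset \bC$ finitely generated over $\bZ$ containing all coefficients needed to define $R$, $J$ (the blown-up ideal), $\tld Y$, $E$, and $\pi$ as in Section \ref{ReductionToCharacteristicP}, so that we obtain $X_A = \Spec R_A$, $\pi_A : \tld Y_A \to Y_A$ proper, and $E_A$ over $A$. The natural map $\O_{X_\bC} \to \myR (\pi_\bC)_* \O_{E_\bC}$ is the base change along $A \to \bC$ of an analogous morphism $\O_{X_A} \to \myR (\pi_A)_* \O_{E_A}$ (using flat base change for the proper morphism $\pi_A$). Applying $\myR \sHom^{\mydot}_{\O_{X_A}}(\blank, \omega_{X_A}^{\mydot})$ and invoking Grothendieck duality (Theorem \ref{GrothendieckDuality}) together with Lemma \ref{LemmaExtBaseChange} produces an $A$-level morphism
\[
\myR (\pi_A)_* \omega_{E_A}^{\mydot} \to \omega_{X_A}^{\mydot}
\]
whose base change (up to a fixed shift by $[\dim A]$ on both sides) recovers \ref{EquationToReduceFromKtot}.

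Second, descend injectivity from $\bC$ to $A$. By shrinking $A$ (inverting finitely many elements, as recalled in Section \ref{ReductionToCharacteristicP}), we may assume that each of the finitely many nonzero cohomology modules $h^i(\myR (\pi_A)_* \omega_{E_A}^{\mydot})$ and $h^i(\omega_{X_A}^{\mydot})$, together with the kernels and cokernels of the induced $A$-linear maps between them, is $A$-free. Since $A$ is a domain with $A \hookrightarrow \bC$, tensoring with $\bC$ is faithfully flat over $A$. The hypothesis says that the kernel of the $i$-th map becomes zero after $\tensor_A \bC$; since this kernel is $A$-free, it must already be zero over $A$. Thus $h^i(\myR (\pi_A)_* \omega_{E_A}^{\mydot}) \hookrightarrow h^i(\omega_{X_A}^{\mydot})$ injects for every $i$.

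Third, base change to characteristic $p$. For every maximal ideal $t \in \Spec A$, the residue field $k(t)$ is a finite (hence perfect) field of characteristic $p$. Applying $\blank \tensor_A k(t)$ to the injective $A$-map preserves injectivity, because both the source and the kernel are $A$-free. It remains to identify this base change with the map $h^i(\myR (\pi_t)_* \omega_{E_t}^{\mydot}) \to h^i(\omega_{X_t}^{\mydot})$: the right-hand side follows directly from Lemma \ref{LemmaExtBaseChange}, while for the left-hand side we first use Lemma \ref{HaraLemma} (applied on a fixed $A$-free affine cover of $\tld Y_A$, computing $\myR \pi_*$ via a \v Cech resolution as in the proof of Proposition \ref{GeneralizedKovacsSurjectivity}) together with Lemma \ref{LemmaExtBaseChange} on the resulting cover of $E_A$, to see that cohomology, dualization, and pushforward all commute with the base change $\tensor_A k(t)$ on an open dense set of $t$. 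The main technical obstacle is precisely this compatibility of Grothendieck duality with generic reduction mod $p$; once arranged by enlarging and then localizing $A$ so that all the relevant finitely many modules and their \v Cech-cohomology computations are $A$-free, every base change argument is automatic and the conclusion follows.
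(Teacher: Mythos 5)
Your overall strategy coincides with the paper's: spread the data out over a finitely generated $\bZ$-algebra $A$, use Grothendieck duality and Lemma \ref{LemmaExtBaseChange} to descend the injectivity from $\bC$ to $A$ after making the relevant cohomologies, kernels and cokernels $A$-free, and then reduce modulo a maximal ideal $t$. The first two steps are fine. The gap is in the third step, where you identify the reduction of the $A$-level map with the natural map $h^i(\myR (\pi_t)_* \omega_{E_t}^{\mydot}) \to h^i(\omega_{X_t}^{\mydot})$. You invoke Lemma \ref{LemmaExtBaseChange} for this, but that lemma only applies to base change along an inclusion $A \subset B$ with $B$ a \emph{field containing} $A$ (hence flat); the residue field $k(t) = A/t$ is a quotient, not a flat $A$-algebra, so the lemma says nothing here. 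Reduction mod $t$ of a dualizing complex is not computed by $\blank \tensor_A k(t)$ but by $f^{!}(\blank) = \myR \Hom^{\mydot}_{R_A}(R_A/(t R_A), \blank)$ for the finite map $\Spec R_A/(tR_A) \to \Spec R_A$, and the assertion that ``dualization commutes with the base change'' is exactly the point that must be proved rather than declared automatic.

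What is actually needed, and what the paper supplies, is the following: apply $\myR \Hom^{\mydot}_{R_A}(R_A/(t R_A), \blank)$ to the map $\myR (\pi_A)_* \omega_{E_A}^{\mydot} \to \omega_{X_A}^{\mydot}$ (Grothendieck duality identifies the result with the desired map on the characteristic $p$ model), and then show that the two hyperext spectral sequences collapse to single columns. The collapse rests on the claim that for any $R_A$-module $M$ that is free over $A$, one has $\Ext^{j}_{R_A}(R_A/(t R_A), M) = 0$ for $j \neq c = \height t$, because after localizing at $t$ the ideal $t$ is generated by a regular sequence on $M$ (here the regularity of $A$ is used). This both identifies $h^{i}$ of the reduced complex with $\Ext^{c}_{R_A}(R_A/(tR_A), h^{i-c}(\blank))$ and, combined with the $A$-freeness of the cokernels (so that the connecting $\Ext^{c-1}$ term vanishes), shows that injectivity survives the reduction. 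Without this regular-sequence/Koszul argument your final paragraph does not close; Lemma \ref{HaraLemma} handles the flat sheaf $\O_{E_A}$ but not the dualizing complexes.
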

\begin{proof}
We now choose an appropriate finitely generated $\bZ$-algebra $A \subset \bC$ and reduce our data to this situation.  We let $S_A$ be
the ring of polynomials in $m$ variables over $A$, of which $R_A$ is a quotient.  We keep track of the corresponding map $\pi_A
: \tld Y_A \rightarrow Y_A$ and the reduced pre-image of $X_A$, which we call $E_A$.  Note that $E_A \times_A \bC \cong E_{\bC}$.  It
is easy to see that we may choose $\pi_A$ so that it is still obtained by blowing up an ideal whose total transform has support equal
to $E_A$.  Without loss of generality we may assume that $A$ is regular.

We first wish to use the injectivity \ref{EquationToReduceFromKtot}, to obtain the injectivity
\[
\xymatrix{
h^i(\myR (\pi_{A})_* \omega_{E_{A}}^{\mydot}) \ar@{^{(}->}[r] & h^i(\omega_{X_{A}}^{\mydot})
}
\]
(at least after replacing $A$ by a suitable localization).  Consider the map
\begin{equation}
\label{DefiningMap}
R_A \rightarrow \Gamma(X_A, \myR (\pi_{A})_* \O_{E_A}).
\end{equation}
Since $X_A$ is affine, we will abuse notation and leave out the functor $\Gamma$ in the future.  We may represent $\myR (\pi_{A})_* \O_{E_A}$
by \Cech cohomology which behaves well with respect to flat base change (for example, see \cite[III.9.3]{Hartshorne}).
We now apply the functor $\myR \Hom_{S_A}^{\mydot}(\blank, S_A[m])$ to \ref{DefiningMap}.  Grothendieck duality and the description of
$f^!$ for finite maps lets us identify the map
\begin{equation}
\label{EquationToReduceFromKtoA}
\myR \Hom_{S_A}^{\mydot}(\myR (\pi_{A})_* \O_{E_A}, S_A[m]) \rightarrow \myR \Hom^{\mydot}_{S_A}(R_A, S_A[m])
\end{equation}
with the map
\[
\myR (\pi_{A})_* \omega_{E_A}^{\mydot} \rightarrow \omega_{X_A}^{\mydot}
\]
after shifting by the dimension of $A$.  It is the characterization of the map in
\ref{EquationToReduceFromKtoA} that we will reduce from over $\bC$ to $A$.  We now localize $A$ to make the various modules mentioned
above (in particular, the cohomology of the above complexes) the modules corresponding to a set of affine charts on $E_A$, and the various kernels and cokernels of maps between the various
modules all $A$-free.  Finally, we apply Lemma \ref{LemmaExtBaseChange} and we see that
\[
\xymatrix{
h^i(\myR (\pi_{A})_* \omega_{E_{A}}^{\mydot}) \ar@{^{(}->}[r] & h^i(\omega_{X_{A}}^{\mydot})
}
\]
injects for each $i$ as desired.

We now will show that the injectivity
\[
\xymatrix{
h^i(\myR (\pi_{t})_* \omega_{E_{t}}^{\mydot}) \ar@{^{(}->}[r] & h^i(\omega_{X_{t}}^{\mydot})
}
\]
is preserved for each $i$ and for a general choice (in fact at this point, any choice) of maximal $t \in \Spec A$.  Therefore choose a maximal $t \in \Spec A$ arbitrarily.

Now note that $R_t$ (respectively $E_t$) is a closed subscheme of $R_A$ (respectively $E_A$).
Apply $f^!(\blank) = \myR \Hom^{\mydot}_{R_A}((R_A)/(t
R_A), \blank)$ to the map $\myR (\pi_{A})_* \omega_{E_{A}}^{\mydot} \rightarrow \omega_{X_{A}}^{\mydot}$.  By Grothendieck duality, this gives us the map in whose cohomology we are ultimately interested, $\myR (\pi_{t})_* \omega_{E_{t}}^\mydot \rightarrow \omega_{X_{t}}^\mydot$.  We wish to analyze the induced maps on cohomology by analyzing the spectral sequences (and the induced map between these spectral sequences)
\begin{align*}
h^p \myR \Hom_{R_A}^{\mydot}(R_A/(t R_A),  h^q \myR (\pi_{A})_* \omega_{E_{A}}^{\mydot}) & \Rightarrow  h^{p+q}\myR \Hom^{\mydot}_{R_A}(R_A/(t
R_A),  \myR (\pi_{A})_* \omega_{E_{A}}^{\mydot}) \\
h^p \myR \Hom_{R_A}^{\mydot}(R_A/(t R_A),  h^q \omega_{X_{A}}^{\mydot}) & \Rightarrow h^{p+q} \myR \Hom^{\mydot}_{R_A}(R_A/(t
R_A),  \omega_{X_{A}}^{\mydot})
\end{align*}

Suppose that $M$ is an arbitrary $R_A$-module that is free as an $A$-module.  I claim that $N = \Ext^i_{R_A}(R_A/(t R_A), M) = 0$ for $i \neq c = \height t$.  To see this, notice that the $R_A$-module $N$ is zero if and only if it is zero when viewed as an $A$-module.  On the other hand, the module $N$ is annihilated by $t$.  Let us use $U$ to denote the multiplicative system $A \setminus t$ and let $B$ be the regular local ring $U^{-1} A$.  Finally set $R_B = U^{-1} R_A \cong R_A \tensor_A B$.  Then notice that
\[
N \cong N \tensor_A B \cong (N \tensor_{R_A} R_A) \tensor_A B \cong N \tensor_{R_A} R_B \cong \Ext^i_{R_B}(R_B/(t R_B), M \tensor_A B).
\]
Now recall that $B \subset R_B$ is a regular local ring with maximal ideal $t B$ and that the modules $M \tensor_A B$ and $R_B$ are free $B$-modules (also observe that $R_B/(t R_B) \cong R_A /(t R_A)$).  But then note $t R_B$ is generated by a regular sequence for the module $M \tensor_A B$ (it also forms a regular sequence for the module $R_B$).  This implies that $\Ext^i_{R_B}(R_B/(t R_B), M \tensor_A B) = 0$ for $i \neq c = \height t = \height (t B)$, which proves the claim.

Therefore the spectral sequences mentioned above are collapsed (single column) spectral sequences.  Thus we see that the map
\[
\xymatrix{
h^i(\myR (\pi_t)_* \omega_{E_{t}}^\mydot) \ar[r] & h^i(\omega_{X_{t}}^\mydot)
}
\]
is identified with the map
\[
\xymatrix{
\Ext_{R_A}^{c}(R_A/(t R_A), h^{i-c} \myR (\pi_{A})_* \omega_{E_{A}}^{\mydot}  ) \ar[r] & \Ext_{R_A}^{c} (R_A/(t R_A), h^{i-c} \omega_{X_{A}}^{\mydot}).
}
\]
Using the fact that we made the
cokernels of the maps
\[
\xymatrix{
h^{i-c} \myR (\pi_{A})_* \omega_{E_{A}}^{\mydot} \ar@{^{(}->}[r] & h^{i-c} \omega_{X_{A}}^{\mydot}
}
\]
$A$-free, we see that we achieve our desired injectivity.
\end{proof}

We now complete the proof of Theorem \ref{TheoremFInjectiveImpliesDuBois}

\begin{proof}
Furthermore, we may assume that $X_{k} =
\Spec R_{k}$ is affine of dimension $d$ and embeds in a smooth $Y_{k} = \bA^m_{k} = \Spec S_{k}$ with codimension at least two.  Let $\pi_{k} : \tld Y_{k} \rightarrow Y_{k}$ be a strong (projective) log resolution of $X_{k} \subset Y_{k}$ and let $E_{k}$ be the
reduced pre-image of $X_{k}$.  Let $\sI_{k}$ be the ideal blown up to obtain this resolution.

Now I claim that we may assume that $k = \bC$.  To check this it is enough to show that the properties of having dense $F$-injective type and having Du Bois singularities are invariant under the requisite base change operations.  Suppose then that $k$ is a field of characteristic zero and $X = X_k$ is of finite type over $k$.  Suppose further that $K \subseteq k$ is isomorphic to a subfield of $\bC$ and that $X_k$, $Y_k$, $E_k$ etc. are defined over $K$.  Abusing notation, we identify $K$ with that subfield of $\bC$.  Then $X_K$ is Du Bois if and only if $\O_{X_K} \rightarrow \DuBois{X_K}$ is a quasi-isomorphism.  But that map is a quasi-isomorphism if and only if it is a quasi-isomorphism after tensoring with $\bC$ (or with $k$).  On the other hand, $X_k$ has dense $F$-injective type if and only if $X_K$ has dense $F$-injective type if and only if $X_{\bC}$ has dense $F$-injective type (one may even use the same set of characteristic $p$ models).  Thus it is harmless to assume that $k = \bC$.

Note that being of dense $F$-injective
type implies seminormality by Theorem \ref{FInjectiveImpliesWeaklyNormal} and Lemma \ref{DenseSeminormalityImpliesSeminormality}.  Therefore we have
$\pi_* \O_{E_k} \cong \O_{X_{k}}$ and so we only need to show that
\[
h^i(\myR (\pi_{k})_* (\O_{E_{k}}))_{P_{k}} = 0
\]
for $i > 0$.
Suppose that $X$ is not Du Bois, so let $P_{k}$ be a prime of $R_{k}$, minimal with respect to the condition that $(R_{k})_{P_{k}}$ is
\emph{not} Du Bois.  By localizing appropriately we can, without loss of generality, assume that there is an injection
\begin{equation}
\label{CharacteristicZeroInjectivity}
\xymatrix{
h^i(\myR (\pi_{k})_* \omega_{E_{k}}^{\mydot}) \ar@{^{(}->}[r] & h^i(\omega_{X_{k}}^{\mydot})
}
\end{equation}
since we can apply Proposition \ref{GeneralizedKovacsSurjectivity} and since inverting a single element of $R_{k}$ will preserve dense
$F$-injective type.  Note in particular that we still have $h^i(\myR (\pi_k)_* \O_{E_k}) \neq 0$ for some $i > 0$.

As before, we choose an appropriate finitely generated $\bZ$-algebra $A \subset \bC$ and reduce our entire situation to finite type
over $A$.  We also localize $A$ so that all $R_A$-modules we encounter are $A$-free (in particular, so that the conclusions of
Theorem \ref{FrobeniusKillsDuBois} and Proposition \ref{PropositionReducingOurInjectivity} will hold).  Note we may assume that $(\pi_A)_* \O_{E_A}
\cong \O_{X_A}$.

Choose a maximal $t \in \Spec A$ such that $R_t = R_A \tensor_A A/t$ is $F$-injective and tensor $R_A$ and $E_A$ with $A/t$.  In
particular, we preserve the isomorphism $R_t \cong (\pi_t)_* \O_{E_t}$.   Choose $Q_t$ a prime minimal with respect to the condition
that $h^i(\myR(\pi_t)_* \O_{E_t})_{Q_t} \neq 0$ for some $i > 0$ and localize $R_t$ to form $(R_t)_{Q_{t}}$.  Such a $Q_t$ exists since
$h^i(\myR(\pi_t)_* \O_{E_t})$ cannot be equal to zero, for all $i > 0$ (this is because $h^i(\myR (\pi_{k})_* (\O_{E_{k}}))$ is not
equal to zero for all $i$; see Lemma \ref{HaraLemma}).  Note, $(R_t)_{Q_t}$ is $F$-injective by assumption.

An application of local duality applied to the conclusion of Proposition \ref{PropositionReducingOurInjectivity} obtains the following surjectivity
for each $i$ (note that there is a sign flip and possible shift on $i$ that we are suppressing):
\begin{equation}
\label{ReducedSurjectivity}
\xymatrix{
H^i_{Q_t}((R_t)_{Q_t}) \ar@{->>}[r] & \bH^i_{Q_t}( (\myR (\pi_t)_* \O_{E_t})_{Q_t} ).
}
\end{equation}
However, $h^i((\myR (\pi_t)_* \O_{E_t})_{Q_t})$  is supported at $Q_t$ for $i > 0$, and so we consider the spectral sequence computing
$\myR \Gamma_{Q_t}((\myR (\pi_t)_* \O_{E_t})_{Q_t})$.  In particular, we have the spectral sequence
\[
h^p(\myR \Gamma_{Q_t}(h^q(\myR (\pi_t)_* \O_{E_t})_{Q_t})) \Rightarrow h^{p + q}(\myR \Gamma_{Q_t}((\myR (\pi_t)_* \O_{E_t})_{Q_t})) = \bH^{p+q}_{Q_t}((\myR (\pi_t)_* \O_{E_t})_{Q_t}).
\]

Since the modules $h^i((\myR (\pi_t)_* \O_{E_t})_{Q_t})$ have support contained
in the maximal ideal of $(R_t)_{Q_t}$ for $i > 0$, the terms $E^{pq}$ of this spectral sequence are zero unless either $p$ or $q$ are zero.  This
implies that this spectral sequence caries the data of a long exact sequence:
\[
\xymatrix@R=8pt{
0 \rightarrow H^1_{Q_t}(((\pi_t)_* \O_{E_t})_{Q_t}) \ar[r] & \bH^1_{Q_t}( (\myR (\pi_t)_* \O_{E_t})_{Q_t} ) \ar[r] & h^1(\myR(\pi_{t})_*
\O_{E_{t}})_{Q_t} \ar[r] & \dots \\
\dots \rightarrow H^i_{Q_t}(((\pi_t)_* \O_{E_t})_{Q_t}) \ar[r] & \bH^i_{Q_t}( (\myR (\pi_t)_* \O_{E_t})_{Q_t} ) \ar[r] &
h^i(\myR(\pi_{t})_* \O_{E_{t}})_{Q_t} \ar[r] & \dots .
}
\]
Therefore, by noting that $(R_t)_{Q_t} \cong ((\pi_t)_* \O_{E_t})_{Q_t}$, and applying the surjectivity \ref{ReducedSurjectivity}, we
obtain the following injectivity for each $i>0$,
\[
\xymatrix{
h^i(\myR(\pi_{t})_* \O_{E_{t}})_{Q_t} \ar@{^{(}->}[r] & H^{i+1}_{Q_t}((R_t)_{Q_t}).
}
\]
The action of Frobenius annihilates the left-hand-side and acts injectively on the right-hand-side, which implies that $h^i
(\myR(\pi_{t})_* \O_{E_{t}})_{Q_t} = 0$ for $i > 0$.  This contradicts our choice of $Q_t$, and thus also contradicts the existence of $P_k$, which completes the proof.
\end{proof}

\section{Further questions}

It is still conjectural that log canonical singularities have dense $F$-pure type, and likewise the corresponding statement between Du
Bois and $F$-injective singularities is also open.

\begin{question}
Suppose that $X$ has Du Bois singularities, does $X$ have dense $F$-injective type?
\end{question}

There is a certain amount of evidence towards this question besides the results in this paper.  In particular, there are several other
properties that Du Bois singularities and $F$-injective singularities share; see \cite{KarlThesis}.  We also might hope for a result
similar to what occurs for rational singularities.

\begin{question}
Suppose that $X_t$ is a family of models reduced from a characteristic zero scheme $X$ which had an embedding into a smooth variety $X
\subset Y$ and a strong log resolution $\pi$ of $X$ with $E$ the reduced pre-image of $X$ (so that $\myR \pi_* \O_E \qis \DuBois{X}$
with the usual notations).  Suppose $X$ had an isolated point $x$ such that $X \setminus x$ is Du Bois.  Is it true that, for a dense set of
models, $h^i(\myR(\pi_t)_* \O_{E_t})_{x_t}$ is naturally identified with the Frobenius closure of $0$ in $H^{i+1}_{x_t}(X_t,
\O_{X_t})$?
\end{question}

The following injectivity, which, if it were known to be true, could be thought of as analogous to the Grauert-Riemenschneider
vanishing theorem, and would be useful in certain problems related to deformations of Du Bois singularities.

\begin{question}
\label{GRVanishingForDuBois}
Suppose that $X$ embeds into a
smooth scheme $Y$ of finite type over a field of characteristic zero, and that $\pi : \tld Y \rightarrow Y$ is a strong log resolution
of $X$ with reduced pre-image $E = (\pi^{-1} X)_{\red}$.
Then is it true that for each $i$, the natural map
\[
h^i(\myR \pi_* \omega_E^{\mydot}) \rightarrow h^i(\omega_X^{\mydot})
\]
is an injection?
\end{question}

If $X$ is Cohen-Macaulay, this would imply that $\myR \pi_* \omega_E^{\mydot}$ has trivial cohomology except in one degree, which more
closely resembles the classical Grauert-Riemenschneider vanishing theorem.



\begin{thebibliography}{GNPP88}

\bibitem[Amb98]{AmbroSeminormalLocus}
{\sc F.~Ambro}: \emph{{The locus of log canonical singularities}}, Preprint
  available at arXiv:math.AG/9806067 (1998).

\bibitem[AB69]{AndreottiBombieri}
{\sc A.~Andreotti and E.~Bombieri}: \emph{Sugli omeomorfismi delle variet\`a
  algebriche}, Ann. Scuola Norm. Sup Pisa (3) \textbf{23} (1969), 431--450.
  {\sf\scriptsize MR0266923 (42 \#1825)}

\bibitem[BM97]{BierstoneMilman}
{\sc E.~Bierstone and P.~D. Milman}: \emph{Canonical desingularization in
  characteristic zero by blowing up the maximum strata of a local invariant},
  Invent. Math. \textbf{128} (1997), no.~2, 207--302. {\sf\scriptsize MR1440306
  (98e:14010)}

\bibitem[BEV05]{BravoEncinasVillmayorSimplified}
{\sc A.~M. Bravo, S.~Encinas, and O.~Villamayor}: \emph{A simplified proof of
  desingularization and applications}, Rev. Mat. Iberoamericana \textbf{21}
  (2005), no.~2, 349--458. {\sf\scriptsize MR2174912}

\bibitem[BS98]{BrodmannSharpLocalCohomology}
{\sc M.~P. Brodmann and R.~Y. Sharp}: \emph{Local cohomology: an algebraic
  introduction with geometric applications}, Cambridge Studies in Advanced
  Mathematics, vol.~60, Cambridge University Press, Cambridge, 1998.
  {\sf\scriptsize MR1613627 (99h:13020)}

\bibitem[BH93]{BrunsHerzog}
{\sc W.~Bruns and J.~Herzog}: \emph{Cohen-{M}acaulay rings}, Cambridge Studies
  in Advanced Mathematics, vol.~39, Cambridge University Press, Cambridge,
  1993. {\sf\scriptsize MR1251956 (95h:13020)}

\bibitem[Con00]{ConradGDualityAndBaseChange}
{\sc B.~Conrad}: \emph{Grothendieck duality and base change}, Lecture Notes in
  Mathematics, vol. 1750, Springer-Verlag, Berlin, 2000. {\sf\scriptsize
  MR1804902 (2002d:14025)}

\bibitem[CGM83]{CuminoGrecoManaresiBertiniAndWeakNormality}
{\sc C.~Cumino, S.~Greco, and M.~Manaresi}: \emph{Bertini theorems for weak
  normality}, Compositio Math. \textbf{48} (1983), no.~3, 351--362.
  {\sf\scriptsize MR700745 (84k:14036)}

\bibitem[Del74]{DeligneHodgeIII}
{\sc P.~Deligne}: \emph{Th\'eorie de {H}odge. {III}}, Inst. Hautes \'Etudes
  Sci. Publ. Math. (1974), no.~44, 5--77. {\sf\scriptsize MR0498552 (58
  \#16653b)}

\bibitem[Doh08]{DohertySingularitiesOfGenericProjectionHypersurfaces}
{\sc D.~Doherty}: \emph{Singularities of generic projection hypersurfaces},
  Proc. Amer. Math. Soc. \textbf{136} (2008), 2407--2415.

\bibitem[DB81]{DuBoisMain}
{\sc P.~Du~Bois}: \emph{Complexe de de {R}ham filtr\'e d'une vari\'et\'e
  singuli\`ere}, Bull. Soc. Math. France \textbf{109} (1981), no.~1, 41--81.
  {\sf\scriptsize MR613848 (82j:14006)}

\bibitem[DB90]{DuBoisDuality}
{\sc P.~Du~Bois}: \emph{Dualit\'e dans la cat\'egorie des complexes filtr\'es
  d'op\'erateurs differentiels d'ordre {$\leq 1$}}, Collect. Math. \textbf{41}
  (1990), no.~2, 89--121 (1991). {\sf\scriptsize MR1149647 (93d:14033)}

\bibitem[Fed83]{FedderFPureRat}
{\sc R.~Fedder}: \emph{{$F$}-purity and rational singularity}, Trans. Amer.
  Math. Soc. \textbf{278} (1983), no.~2, 461--480. {\sf\scriptsize MR701505
  (84h:13031)}

\bibitem[Gib89]{GibsonSeminormalityInDimensionOne}
{\sc G.~J. Gibson}: \emph{Seminormality and {$F$}-purity in local rings}, Osaka
  J. Math. \textbf{26} (1989), no.~2, 245--251. {\sf\scriptsize MR1017583
  (90j:13007)}

\bibitem[GR70]{GRVanishing}
{\sc H.~Grauert and O.~Riemenschneider}: \emph{Verschwindungss\"atze f\"ur
  analytische {K}ohomologiegruppen auf komplexen {R}\"aumen}, Invent. Math.
  \textbf{11} (1970), 263--292. {\sf\scriptsize MR0302938 (46 \#2081)}

\bibitem[GT80]{GrecoTraversoSeminormal}
{\sc S.~Greco and C.~Traverso}: \emph{On seminormal schemes}, Compositio Math.
  \textbf{40} (1980), no.~3, 325--365. {\sf\scriptsize MR571055 (81j:14030)}

\bibitem[Gro61]{EGAIII1}
{\sc A.~Grothendieck}: \emph{\'{E}l\'ements de g\'eom\'etrie alg\'ebrique.
  {III}. \'{E}tude cohomologique des faisceaux coh\'erents. {I}}, Inst. Hautes
  \'Etudes Sci. Publ. Math. (1961), no.~11, 167. {\sf\scriptsize MR0163910 (29
  \#1209)}

\bibitem[GNPP88]{GNPP}
{\sc G{uill{\'e}n, F.}, N{avarro Aznar, V.}, P{ascual Gainza, P.}, and P{uerta,
  F.}}: \emph{Hyperr\'esolutions cubiques et descente cohomologique}, Lecture
  Notes in Mathematics, vol. 1335, Springer-Verlag, Berlin, 1988, Papers from
  the Seminar on Hodge-Deligne Theory held in Barcelona, 1982. {\sf\scriptsize
  MR972983 (90a:14024)}

\bibitem[Ham75]{HamannSeminormal}
{\sc E.~Hamann}: \emph{On the {$R$}-invariance of {$R[x]$}}, J. Algebra
  \textbf{35} (1975), 1--16. {\sf\scriptsize MR0404233 (53 \#8036)}

\bibitem[Har98a]{HaraRatImpliesFRat}
{\sc N.~Hara}: \emph{A characterization of rational singularities in terms of
  injectivity of {F}robenius maps}, Amer. J. Math. \textbf{120} (1998), no.~5,
  981--996. {\sf\scriptsize MR1646049 (99h:13005)}

\bibitem[Har98b]{HaraDimensionTwo}
{\sc N.~Hara}: \emph{Classification of two-dimensional {$F$}-regular and
  {$F$}-pure singularities}, Adv. Math. \textbf{133} (1998), no.~1, 33--53.
  {\sf\scriptsize MR1492785 (99a:14048)}

\bibitem[Har01]{HaraInterpretation}
{\sc N.~Hara}: \emph{Geometric interpretation of tight closure and test
  ideals}, Trans. Amer. Math. Soc. \textbf{353} (2001), no.~5, 1885--1906
  (electronic). {\sf\scriptsize MR1813597 (2001m:13009)}

\bibitem[HW02]{HaraWatanabeFRegFPure}
{\sc N.~Hara and K.-I. Watanabe}: \emph{F-regular and {F}-pure rings vs. log
  terminal and log canonical singularities}, J. Algebraic Geom. \textbf{11}
  (2002), no.~2, 363--392. {\sf\scriptsize MR1874118 (2002k:13009)}

\bibitem[HY03]{HaraYoshidaGeneralizationOfTightClosure}
{\sc N.~Hara and K.-I. Yoshida}: \emph{A generalization of tight closure and
  multiplier ideals}, Trans. Amer. Math. Soc. \textbf{355} (2003), no.~8,
  3143--3174 (electronic). {\sf\scriptsize MR1974679 (2004i:13003)}

\bibitem[Har66]{HartshorneResidues}
{\sc R.~Hartshorne}: \emph{Residues and duality}, Lecture notes of a seminar on
  the work of A. Grothendieck, given at Harvard 1963/64. With an appendix by P.
  Deligne. Lecture Notes in Mathematics, No. 20, Springer-Verlag, Berlin, 1966.
  {\sf\scriptsize MR0222093 (36 \#5145)}

\bibitem[Har77]{Hartshorne}
{\sc R.~Hartshorne}: \emph{Algebraic geometry}, Springer-Verlag, New York,
  1977, Graduate Texts in Mathematics, No. 52. {\sf\scriptsize MR0463157 (57
  \#3116)}

\bibitem[HS77]{HartshorneSpeiserLocalCohomologyInCharacteristicP}
{\sc R.~Hartshorne and R.~Speiser}: \emph{Local cohomological dimension in
  characteristic {$p$}}, Ann. of Math. (2) \textbf{105} (1977), no.~1, 45--79.
  {\sf\scriptsize MR0441962 (56 \#353)}

\bibitem[Hir64]{HironakaResolution}
{\sc H.~Hironaka}: \emph{Resolution of singularities of an algebraic variety
  over a field of characteristic zero. {I}, {II}}, Ann. of Math. (2) 79 (1964),
  109--203; ibid. (2) \textbf{79} (1964), 205--326. {\sf\scriptsize MR0199184
  (33 \#7333)}

\bibitem[HH90]{HochsterHunekeTC1}
{\sc M.~Hochster and C.~Huneke}: \emph{Tight closure, invariant theory, and the
  {B}rian\c con-{S}koda theorem}, J. Amer. Math. Soc. \textbf{3} (1990), no.~1,
  31--116. {\sf\scriptsize MR1017784 (91g:13010)}

\bibitem[HH06]{HochsterHunekeTightClosureInEqualCharactersticZero}
{\sc M.~Hochster and C.~Huneke}: \emph{Tight closure in equal characteristic
  zero}, A preprint of a manuscript, 2006.

\bibitem[HR76]{HochsterRobertsFrobeniusLocalCohomology}
{\sc M.~Hochster and J.~L. Roberts}: \emph{The purity of the {F}robenius and
  local cohomology}, Advances in Math. \textbf{21} (1976), no.~2, 117--172.
  {\sf\scriptsize MR0417172 (54 \#5230)}

\bibitem[Hun96]{HunekeTightClosureBook}
{\sc C.~Huneke}: \emph{Tight closure and its applications}, CBMS Regional
  Conference Series in Mathematics, vol.~88, Published for the Conference Board
  of the Mathematical Sciences, Washington, DC, 1996, With an appendix by
  Melvin Hochster. {\sf\scriptsize MR1377268 (96m:13001)}

\bibitem[Ish87]{IshiiIsolatedQGorenstein}
{\sc S.~Ishii}: \emph{Isolated {$Q$}-{G}orenstein singularities of dimension
  three}, Complex analytic singularities, Adv. Stud. Pure Math., vol.~8,
  North-Holland, Amsterdam, 1987, pp.~165--198. {\sf\scriptsize MR894292
  (89d:32016)}

\bibitem[KMM87]{KMM}
{\sc Y.~Kawamata, K.~Matsuda, and K.~Matsuki}: \emph{Introduction to the
  minimal model problem}, Algebraic geometry, Sendai, 1985, Adv. Stud. Pure
  Math., vol.~10, North-Holland, Amsterdam, 1987, pp.~283--360. {\sf\scriptsize
  MR946243 (89e:14015)}

\bibitem[Kol95]{KollarShafarevich}
{\sc J.~Koll{\'a}r}: \emph{Shafarevich maps and automorphic forms}, M. B.
  Porter Lectures, Princeton University Press, Princeton, NJ, 1995.
  {\sf\scriptsize MR1341589 (96i:14016)}

\bibitem[Kol97]{KollarSingularitiesOfPairs}
{\sc J.~Koll{\'a}r}: \emph{Singularities of pairs}, Algebraic geometry---Santa
  Cruz 1995, Proc. Sympos. Pure Math., vol.~62, Amer. Math. Soc., Providence,
  RI, 1997, pp.~221--287. {\sf\scriptsize MR1492525 (99m:14033)}

\bibitem[K+92]{KollarFlipsAndAbundance}
{\sc J.~Koll{\'a}r and {14 coauthors}}: \emph{Flips and abundance for algebraic
  threefolds}, Soci\'et\'e Math\'ematique de France, Paris, 1992, Papers from
  the Second Summer Seminar on Algebraic Geometry held at the University of
  Utah, Salt Lake City, Utah, August 1991, Ast\'erisque No. 211 (1992).
  {\sf\scriptsize MR1225842 (94f:14013)}

\bibitem[KM98]{KollarMori}
{\sc J.~Koll{\'a}r and S.~Mori}: \emph{Birational geometry of algebraic
  varieties}, Cambridge Tracts in Mathematics, vol. 134, Cambridge University
  Press, Cambridge, 1998, With the collaboration of C. H. Clemens and A. Corti,
  Translated from the 1998 Japanese original. {\sf\scriptsize MR1658959
  (2000b:14018)}

\bibitem[Kov99]{KovacsDuBoisLC1}
{\sc S.~J. Kov{\'a}cs}: \emph{Rational, log canonical, {D}u {B}ois
  singularities: on the conjectures of {K}oll\'ar and {S}teenbrink}, Compositio
  Math. \textbf{118} (1999), no.~2, 123--133. {\sf\scriptsize MR1713307
  (2001g:14022)}

\bibitem[Kov00]{KovacsDuBoisLC2}
{\sc S.~J. Kov{\'a}cs}: \emph{Rational, log canonical, {D}u {B}ois
  singularities. {II}. {K}odaira vanishing and small deformations}, Compositio
  Math. \textbf{121} (2000), no.~3, 297--304. {\sf\scriptsize MR1761628
  (2001m:14028)}

\bibitem[LV81]{VitulliSeminormal}
{\sc J.~V. Leahy and M.~A. Vitulli}: \emph{Seminormal rings and weakly normal
  varieties}, Nagoya Math. J. \textbf{82} (1981), 27--56. {\sf\scriptsize
  MR618807 (83a:14015)}

\bibitem[Lip02]{LipmanLocalCohomologyAndDuality}
{\sc J.~Lipman}: \emph{Lectures on local cohomology and duality}, Local
  cohomology and its applications (Guanajuato, 1999), Lecture Notes in Pure and
  Appl. Math., vol. 226, Dekker, New York, 2002, pp.~39--89. {\sf\scriptsize
  MR1888195 (2003b:13027)}

\bibitem[Mat80]{MatsumuraCommutativeAlgebra}
{\sc H.~Matsumura}: \emph{Commutative algebra}, second ed., Mathematics Lecture
  Note Series, vol.~56, Benjamin/Cummings Publishing Co., Inc., Reading, Mass.,
  1980. {\sf\scriptsize MR575344 (82i:13003)}

\bibitem[MS91]{MehtaSrinivasFPureSurface}
{\sc V.~B. Mehta and V.~Srinivas}: \emph{Normal {$F$}-pure surface
  singularities}, J. Algebra \textbf{143} (1991), no.~1, 130--143.
  {\sf\scriptsize MR1128650 (92j:14044)}

\bibitem[MS97]{MehtaSrinivasRatImpliesFRat}
{\sc V.~B. Mehta and V.~Srinivas}: \emph{A characterization of rational
  singularities}, Asian J. Math. \textbf{1} (1997), no.~2, 249--271.
  {\sf\scriptsize MR1491985 (99e:13009)}

\bibitem[PS73]{PeskineSzpiroDimensionProjective}
{\sc C.~Peskine and L.~Szpiro}: \emph{Dimension projective finie et cohomologie
  locale. {A}pplications \`a la d\'emonstration de conjectures de {M}.
  {A}uslander, {H}. {B}ass et {A}. {G}rothendieck}, Inst. Hautes \'Etudes Sci.
  Publ. Math. (1973), no.~42, 47--119. {\sf\scriptsize MR0374130 (51 \#10330)}

\bibitem[RRS96]{ReidRobertsSinghWeak}
{\sc L.~Reid, L.~G. Roberts, and B.~Singh}: \emph{On weak subintegrality}, J.
  Pure Appl. Algebra \textbf{114} (1996), no.~1, 93--109. {\sf\scriptsize
  MR1425322 (97j:13012)}

\bibitem[Rei87]{ReidYoungPersons}
{\sc M.~Reid}: \emph{Young person's guide to canonical singularities},
  Algebraic geometry, Bowdoin, 1985 (Brunswick, Maine, 1985), Proc. Sympos.
  Pure Math., vol.~46, Amer. Math. Soc., Providence, RI, 1987, pp.~345--414.
  {\sf\scriptsize MR927963 (89b:14016)}

\bibitem[Sai00]{SaitoMixedHodge}
{\sc M.~Saito}: \emph{Mixed {H}odge complexes on algebraic varieties}, Math.
  Ann. \textbf{316} (2000), no.~2, 283--331. {\sf\scriptsize MR1741272
  (2002h:14012)}

\bibitem[Sch06]{KarlThesis}
{\sc K.~Schwede}: \emph{On {$F$}-injective and {D}u {B}ois singularities},
  Ph.D. thesis, University of Washington, 2006.

\bibitem[Sch07]{SchwedeEasyCharacterization}
{\sc K.~Schwede}: \emph{A simple characterization of {D}u {B}ois
  singularities}, Compos. Math. \textbf{143} (2007), no.~4, 813--828.
  {\sf\scriptsize MR2339829}

\bibitem[Smi97]{SmithFRatImpliesRat}
{\sc K.~E. Smith}: \emph{{$F$}-rational rings have rational singularities},
  Amer. J. Math. \textbf{119} (1997), no.~1, 159--180. {\sf\scriptsize
  MR1428062 (97k:13004)}

\bibitem[Smi00]{SmithMultiplierTestIdeals}
{\sc K.~E. Smith}: \emph{The multiplier ideal is a universal test ideal}, Comm.
  Algebra \textbf{28} (2000), no.~12, 5915--5929, Special issue in honor of
  Robin Hartshorne. {\sf\scriptsize MR1808611 (2002d:13008)}

\bibitem[Smi01]{SmithTightClosure}
{\sc K.~E. Smith}: \emph{Tight closure and vanishing theorems}, School on
  Vanishing Theorems and Effective Results in Algebraic Geometry (Trieste,
  2000), ICTP Lect. Notes, vol.~6, Abdus Salam Int. Cent. Theoret. Phys.,
  Trieste, 2001, pp.~149--213. {\sf\scriptsize MR1919458 (2003f:13005)}

\bibitem[Sri91]{SrinivasFPureType}
{\sc V.~Srinivas}: \emph{Normal surface singularities of {$F$}-pure type}, J.
  Algebra \textbf{142} (1991), no.~2, 348--359. {\sf\scriptsize MR1127067
  (93b:14060)}

\bibitem[Ste81]{SteenbrinkCohomologicallyInsignificant}
{\sc J.~H.~M. Steenbrink}: \emph{Cohomologically insignificant degenerations},
  Compositio Math. \textbf{42} (1980/81), no.~3, 315--320. {\sf\scriptsize
  MR607373 (84g:14011)}

\bibitem[Ste83]{SteenbrinkMixed}
{\sc J.~H.~M. Steenbrink}: \emph{Mixed {H}odge structures associated with
  isolated singularities}, Singularities, Part 2 (Arcata, Calif., 1981), Proc.
  Sympos. Pure Math., vol.~40, Amer. Math. Soc., Providence, RI, 1983,
  pp.~513--536. {\sf\scriptsize MR713277 (85d:32044)}

\bibitem[Ste85]{SteenbrinkgVanishing}
{\sc J.~H.~M. Steenbrink}: \emph{Vanishing theorems on singular spaces},
  Ast\'erisque (1985), no.~130, 330--341, Differential systems and
  singularities (Luminy, 1983). {\sf\scriptsize MR804061 (87j:14026)}

\bibitem[Swa80]{SwanSeminormality}
{\sc R.~G. Swan}: \emph{On seminormality}, J. Algebra \textbf{67} (1980),
  no.~1, 210--229. {\sf\scriptsize MR595029 (82d:13006)}

\bibitem[Tak04]{TakagiInversion}
{\sc S.~Takagi}: \emph{F-singularities of pairs and inversion of adjunction of
  arbitrary codimension}, Invent. Math. \textbf{157} (2004), no.~1, 123--146.
  {\sf\scriptsize MR2135186}

\bibitem[Tak08]{TakagiPLTAdjoint}
{\sc S.~Takagi}: \emph{A characteristic {$p$} analogue of plt singularities and
  adjoint ideals}, Math. Z. \textbf{259} (2008), no.~2, 321--341.
  {\sf\scriptsize MR2390084}

\bibitem[TW04]{TakagiWatanabeFPureThresh}
{\sc S.~Takagi and K.-i. Watanabe}: \emph{On {F}-pure thresholds}, J. Algebra
  \textbf{282} (2004), no.~1, 278--297. {\sf\scriptsize MR2097584
  (2006a:13010)}

\bibitem[Vit83]{VitulliHyperplaneSections}
{\sc M.~A. Vitulli}: \emph{The hyperplane sections of weakly normal varieties},
  Amer. J. Math. \textbf{105} (1983), no.~6, 1357--1368. {\sf\scriptsize
  MR722001 (85g:14062)}

\bibitem[W{\l}o05]{WlodarczykResolution}
{\sc J.~W{\l}odarczyk}: \emph{Simple {H}ironaka resolution in characteristic
  zero}, J. Amer. Math. Soc. \textbf{18} (2005), no.~4, 779--822 (electronic).
  {\sf\scriptsize MR2163383}

\bibitem[Yan85]{YanagiharaWeaklyNormal}
{\sc H.~Yanagihara}: \emph{On an intrinsic definition of weakly normal rings},
  Kobe J. Math. \textbf{2} (1985), no.~1, 89--98. {\sf\scriptsize MR811809
  (87d:13007)}

\end{thebibliography}
\providecommand{\bysame}{\leavevmode\hbox to3em{\hrulefill}\thinspace}
\providecommand{\MR}{\relax\ifhmode\unskip\space\fi MR}
\providecommand{\MRhref}[2]{%
  \href{http://www.ams.org/mathscinet-getitem?mr=#1}{#2}
}
\providecommand{\href}[2]{#2}

\end{document}